\theoremstyle{plain}
  \newtheorem{thm}{Theorem}[section]
  \newtheorem{prop}[thm]{Proposition}
  \newtheorem{lem}[thm]{Lemma}
  \newtheorem{cor}[thm]{Corollary}
  \newtheorem{conj}[thm]{Conjecture}
\theoremstyle{definition}
  \newtheorem{dfn}[thm]{Definition}
  \newtheorem{exmp}[thm]{Example}
  \newtheorem{rem}[thm]{Remark}
\let\opn\operatorname %abbreviation of \operatorname
\newcommand\@bothmode[1]{\ifmmode #1\else $#1$ \fi}
\numberwithin{equation}{section}
\renewcommand\theenumi{\@roman\c@enumi}
\renewcommand{\@biblabel}[1]{#1.}
\newcommand\chdir[1]{%
\let\pwd\relax%
\def\pwd{#1}}
\newcommand\inputs[1]{\@for\@tempmember:=#1\do{\input{\pwd /\@tempmember}}}
\DeclareFontFamily{U}{futm}{}
\DeclareFontShape{U}{futm}{m}{n}{
  <-> s * [.92] fourier-bb
  }{}
\DeclareMathAlphabet{\math@bb}{U}{futm}{m}{n}
\let\mathbb\math@bb\relax
\DeclareMathAlphabet{\math@cal}{OMS}{cmsy}{m}{n}
\let\mathcal\math@cal\relax
\newcommand\NN{\mathbb{N}} %the monoid of natural numbers with 0
\newcommand\ZZ{\mathbb{Z}} %the ring of integers
\newcommand\kk{\mathbb{k}} %base field
\newcommand\too{\longrightarrow}
\newcommand\ba{\mathbf{a}}
\newcommand\bb{\mathbf{b}}
\newcommand\bc{\mathbf{c}}
\newcommand\bd{\mathbf{d}}
\newcommand\be{\mathbf{e}}
\newcommand\br{\mathbf{r}}
\newcommand\one{\mathbf{1}}
\newcommand\zero{\mathbf{0}}
\newcommand\bax{\bar{x}}
\let\gb\beta
\let\gs\sigma
\let\gD\Delta
\let\@tempar\relax %temporary arrow command
\def\@seton^#1{\overset{#1}{\@tempar}}
\def\@setbeneath_#1{\underset{#1}{\@tempar}}
\newcommand\defar[2]{\@xp\def\csname #1\endcsname{
    \def\@tempar{#2}\@ifnextchar^{\@seton}{
        \@ifnextchar_{\@setbeneath}{\@tempar}
    }}}
\newcommand\imply{\@bothmode{\Rightarrow}} % implication to the right
\newcommand\Imply{\@bothmode{\Longrightarrow}} % long arrow version of \imply
\renewcommand\iff{\@bothmode\Leftrightarrow} % equivalence
\newcommand\Iff{\@bothmode\Longleftrightarrow}
\newcommand\get{\@bothmode{\Leftarrow}} % implication to the left
\newcommand\Get{\@bothmode{\Longleftarrow}} % long arrow version of \get
\newcommand\bra[1]{[#1]} % square bracket
\newcommand\mbra[1]{\left\{ #1 \right\}} % middle bracket
\newcommand\set[2]{\mbra{\,#1\ \left|\ #2\, \right.}} %set
\let\Dsum\bigoplus %\big direct sum
\newcommand\defopn[1]{\@xp\def\csname #1\endcsname{\opn{\csname the#1\endcsname\relax}}}
\newcommand\ann{\opn{ann}}
\newcommand\depth{\opn{depth}}
\newcommand\sdepth{\opn{sdepth}}
\newcommand\supp{\opn{supp}}
\newcommand\sd{\opn{sd}}
\newcommand\qsd{\opn{qsd}}
\newcommand\reg{\opn{reg}}
\newcommand\hreg{\opn{hreg}}
\newcommand\sreg{\opn{supp. \!reg}}
\newcommand\shreg{\widetilde{h}\opn{-reg}}
\newcommand\seq[2][n]{{#2}_1,\dots ,{#2}_{#1}}
\newcommand\poly[2][\kk]{#1\bra{#2}}
\def\@polys[#1][#2]#3{\poly[#1]{\seq[#2]#3}}
\def\@polysx[#1]{\@ifnextchar[{\@polys[#1]}{\@polys[\kk][#1]}}
\def\polys{\@ifnextchar[{\@polysx}{\@polys[\kk][n]}}
\newcommand\fm{\mathfrak{m}}
\renewcommand\mod{\opn{mod}}
\newcommand\Hom{\opn{Hom}}
\newcommand\Ext{\opn{Ext}}
\newcommand\sA{\mathscr{A}}
\newcommand\op{\mathsf{op}}
\newcommand\cD{\mathcal{D}}
\title[Alexander duality and Stanley depth]{Alexander duality and Stanley depth 
of multigraded modules}
\author{Ryota Okazaki}
\address{Department of Pure and Applied Mathematics, 
Graduate School of Information Science and Technology,
Osaka University, Toyonaka, Osaka 
560-0043, Japan}
\email{u574021d@ecs.cmc.osaka-u.ac.jp}
\author{Kohji Yanagawa}
\thanks{The first author is partially supported by JST, CREST. 
The second author is partially supported by Grant-in-Aid for Scientific Research (c) (no.19540028).}
\address{Department of Mathematics, Kansai University,
Suita 564-8680, Japan}
\email{yanagawa@ipcku.kansai-u.ac.jp}
\begin{document}

\maketitle

\begin{abstract}
We apply Miller's theory on multigraded modules over a polynomial ring  
to the study of the Stanley depth 
of these modules. Several tools for Stanley's conjecture are developed, 
and a few partial answers are given.  
For example, we show that taking the Alexander duality twice (but with 
different ``centers") is useful for this subject.  Generalizing a result of Apel, 
we prove that Stanley's conjecture holds for the quotient by a cogeneric monomial ideal.
\end{abstract}

\section{Introduction}
Let $S=\kk[x_1, \ldots, x_n]$ be a polynomial ring over a field $\kk$. 
We regard it as a $\ZZ^n$-graded ring in the natural way. 
Let $\mod_{\ZZ^n} S$ be the category of finitely generated $\ZZ^n$-graded 
$S$-modules and degree preserving $S$-homomorphisms between them.   
We say $M = \bigoplus_{\ba \in \ZZ^n} M_\ba \in \mod_{\ZZ^n} S$ is $\NN^n$-graded if 
$M_\ba = 0$ for all $\ba \not \in \NN^n$. 
Let $\mod_{\NN^n} S$ denote the full subcategory of $\mod_{\ZZ^n} S$ 
consisting of $\NN^n$-graded modules.

For a subset $Z \subset \{ x_1, \ldots, x_n \}$, $\kk[Z]$ denotes 
the $\kk$-subalgebra of $S$ generated by all $x_i \in Z$. 
Clearly, $\kk[Z]$ is a polynomial ring with $\dim \kk[Z] = \# Z$. 
Let $M \in \mod_{\ZZ^n} S$.  We say the $\kk[Z]$-submodule $m \, \kk[Z]$ of $M$ 
generated by a homogeneous element $m \in M_\ba$ is a {\it Stanley space}, if it is 
$\kk[Z]$-free.  Note that $m \, \kk[Z]$ is a Stanley space if and only if 
$\ann(m) \subset (x_i \mid x_i \not \in Z)$. 
A {\it Stanley decomposition} $\cD$ of $M$ is a presentation of 
$M$ as a finite direct sum of Stanley spaces. That is, 
$$\cD:\bigoplus_{i=1}^s m_i \, \kk[Z_i] = M$$ 
as $\ZZ^n$-graded $\kk$-vector spaces, where each $m_i \, \kk[Z_i]$ 
is a Stanley space. 

Let $\sd(M)$ be the set of Stanley decompositions of $M$. 
For all $0 \ne M \in \mod_{\ZZ^n} S$, we have $\sd (M) \ne \emptyset$. 
For $\cD=\bigoplus_{i=1}^s m_i \, \kk[Z_i] \in \sd (M)$, we set 
$$\sdepth (\cD) := \min\set{\# Z_i}{i = 1,\dots ,s},$$
and call it the  {\em Stanley depth} of $\cD$. 
The Stanley depth of $M$ is defined by 
$$
\sdepth (M) := \max\set{\sdepth \cD}{\cD \in \sd (M)}.
$$

While it is obvious that $\sdepth M \leq \dim_S M$, 
this invariant behaves somewhat strangely. For example, if $I$ is a 
complete intersection monomial ideal of codimension $c$ then we have 
$\sdepth (S/I) = n-c$ but $\sdepth I = n - \lfloor \frac{c}{2} \rfloor$ 
as shown in \cite{She}. The following is a special case of the conjecture raised in \cite{St82}.
 
\begin{conj}%[Stanley's conjecture for $\ZZ^n$-graded $S$-modules]
[Stanley]\label{Stanley conj}
Assume $\kk$ is infinite. For any $M \in \mod_{\ZZ^n} S$, we have
$$
\sdepth M \ge \depth M.
$$
(If $M = I/J$ for some monomial ideals $I,J$ of $S$ with $I \supset J$, 
then the assumption that $\kk$ is infinite is superfluous.)
\end{conj}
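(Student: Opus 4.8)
The plan is to attack Conjecture~\ref{Stanley conj} by a chain of reductions to increasingly special modules, using Miller's duality theory for $\ZZ^n$-graded modules as the main engine. First I would reduce from an arbitrary $M \in \mod_{\ZZ^n} S$ to an $\NN^n$-graded one (harmless, after a degree shift), then, in the monomial-ideal case $M = I/J$, to the \emph{squarefree} case via polarization. The point to check here is that polarization is compatible with Stanley decompositions and changes $\depth$ and $\sdepth$ in the same controlled way, so that the inequality for the polarized module implies it for $M$; this is essentially bookkeeping, but it must be done carefully because $\sdepth$ is sensitive to the ambient number of variables.

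Second, on the squarefree side I would exploit Alexander duality as advertised in the abstract. The idea is that applying the duality --- possibly with respect to two \emph{different} centers $\ba, \bb \in \NN^n$ --- trades homological data ($\depth$, local cohomology, the Cohen--Macaulay property, as in Eagon--Reiner-type statements) for combinatorial data that one can read off a Stanley decomposition. I would try to establish that $\sdepth$ behaves (anti)monotonically under such a duality in a way matching the behavior of $\depth$ under the Alexander dual, so that $\sdepth M \ge \depth M$ becomes a dual inequality for $M^\vee$ that is, one hopes, easier to see directly. Iterating with two centers is the mechanism for moving the inequality back to $M$ after it has been verified in the more convenient dual world.

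Third, I would pin down the conjecture outright for structurally transparent classes where both sides are computable: complete-intersection monomial ideals (cf.~\cite{She}), Cohen--Macaulay squarefree modules, and --- generalizing Apel --- quotients by \emph{cogeneric} monomial ideals. For the last, the natural strategy is to produce an explicit Stanley decomposition of $S/I$ of the correct depth out of the irreducible-decomposition / Scarf-complex data attached to a cogeneric ideal, and then match it against the known value of $\depth(S/I)$.

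The hard part will be that Stanley decompositions are essentially non-functorial: they do not localize, they are not additive along short exact sequences, and there is no Auslander--Buchsbaum-type formula, so the standard inductive machinery of commutative algebra (peel off a variable, or a generator, and recurse) is simply unavailable --- each such move tends to destroy the decomposition. Consequently, the Alexander-duality step cannot be a purely formal homological argument; it requires a genuinely combinatorial comparison of $\sdepth$ before and after dualizing, and I expect that to be the crux. For this reason I anticipate that the reductions above yield only \textbf{partial} answers for general $M$ (the special classes listed, plus whatever Alexander duality transports from them), and that the full conjecture remains out of reach of this approach.
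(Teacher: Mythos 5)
The statement you are addressing is a \emph{conjecture}, and the paper does not prove it; the authors state explicitly that it remains widely open and give only tools and partial results (most notably Theorem~\ref{skeleton CM}, which reduces the conjecture to the Cohen--Macaulay case, and Theorem~\ref{cogeneric}, which settles $S/I$ for $I$ cogeneric). Your proposal is candid about this and ends with the same conclusion, so there is no false claim of a proof to rebut. Where your outline matches the paper is in the large-scale strategy: Alexander duality with respect to two different centers $\ba$, $\ba+\bb$ (the operation $(-)^{\triangleleft\bb}$ of Section~5), the observation that this trades $\depth$-type data for $\sdepth$-type data so that the conjecture becomes an inequality $\shreg \le \sreg$ on the dual side, and the cogeneric case as the main new class.

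Where your outline diverges, and where it would actually break down, is the proposed reduction to the squarefree case by polarization. The paper deliberately avoids polarization and instead works in Miller's category $\mod_\ba S$ directly, introducing \emph{quasi Stanley decompositions} and the invariant $\shreg$ precisely because Soleyman Jahan's squarefree duality of Stanley decompositions does not extend to $\sA_\ba$ on honest Stanley decompositions (see Example~\ref{qsd exmp} and Remark~\ref{shreg remark}). Your bookkeeping step --- ``polarization changes $\depth$ and $\sdepth$ in the same controlled way'' --- is exactly the unresolved point: polarization raises the ambient number of variables, and there is no general theorem that $\sdepth$ shifts by the same amount as $\depth$; it is known that $\sdepth$ is not additive across short exact sequences and behaves erratically under change of ring, so you cannot take this for granted. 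You also omit the skeleton construction $M^{\le l}$, which is the other half of the paper's machinery: it is what lets one prove $\sdepth(\cdot) \ge \depth(\cdot)$ on a tractable Cohen--Macaulay piece and then propagate upward via Lemma~\ref{sec:ex seq and sdepth} and Proposition~\ref{depth skeleton}. In short, your plan has the right dual-centers skeleton, but the step you call ``essentially bookkeeping'' is in fact the place where the paper had to invent new objects (quasi Stanley decompositions, $\shreg$, skeletons of $\ba$-determined modules) rather than appeal to polarization.
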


After the works of Apel's (\cite{A1,A2}), the conjecture has been intensely studied.  
(See for example \cite{HSZ,HVZ,She, SoJ}. Here we listed papers directly related to the 
present paper, and there are many other interesting works.) 
However the conjecture is still widely open. 
No relation between $\sdepth I$ and  $\sdepth (S/I)$ is known in the general case, 
hence the conjecture for $I$ itself and that for $S/I$ are different stories. 

In \cite{M}, Miller introduced the notion of {\it positively $\ba$-determined} $S$-modules 
for each $\ba \in \NN^n$. These modules form the full subcategory $\mod_\ba S$ of $\mod_{\NN^n} S$,  
which admits the {\it Alexander duality functor} $\sA_\ba : \mod_\ba S \to (\mod_\ba S)^\op$. 
Any $M \in \mod_{\NN^n} S$ is positively $\ba$-determined for sufficiently large $\ba \in \NN^n$, 
and $\sdepth M$ is attained by a positively $\ba$-determined Stanley decomposition in this case. 
Hence we can study the Stanley depth in Miller's context. 
For $\one := (1,1, \ldots, 1) \in \NN^n$,  positively $\one$-determined modules are nothing 
other than {\it squarefree modules} introduced in \cite{Y}. 

For a squarefree module $M$ and a squarefree (i.e., positively $\one$-determined) Stanley decomposition $\cD$ of $M$, 
Soleyman Jahan~\cite{SoJ} defined the Alexander dual $\sA_\one(\cD) \in \sd(\sA_\one(M))$ of $\cD$. 
However, it is impossible to generalize his construction to $\mod_\ba S$ and $\sA_\ba$  directly. 
So we will introduce the notion of {\it quasi Stanley decompositions}. 
Let $\qsd (M)$ (resp. $\qsd_\ba (M)$) be the set of  (resp.  positively $\ba$-determined) 
quasi Stanley decompositions of $M \in \mod_\ba S$. 
Then $\sd(M) \subset \qsd (M) = \bigcup_{\ba \in \NN^n} \qsd_\ba (M)$ and 
$\sdepth M$ can be computed also by $\qsd_\ba (M)$ or  $\qsd (M)$.   
Moreover, the Alexander duality $\sA_\ba$ 
gives a bijection from $\qsd_\ba(M)$ to $\qsd_\ba(\sA_\ba (M))$.

Using $\qsd (M)$, we can define a new invariant $\shreg(M)$. 
As an analog of Miller's equation 
$$ \sreg(M)+ \depth (\sA_\ba(M))=n$$
(the {\it support regularity} $\sreg (M)$ of $M$ is introduced also by Miller), 
we have  $$ \shreg(M)+\sdepth (\sA_\ba(M))=n.$$ 
Hence Stanley's conjecture 
(Conjecture~\ref{Stanley conj}) is equivalent to the conjecture that 
$\shreg (M) \leq \sreg (M)$ for all $M \in \mod_{\NN^n} S$.   
If $M$ is squarefree, then $\sreg(M)$ equals the usual (Castelnuovo-Mumford) 
regularity of $M$, and $\shreg M$ equals $\hreg M$ defined in Soleyman Jahan~\cite{SoJ}. 
Hence our observation is a generalization of that in \cite{SoJ}. 

For $l \in \NN$, we define the $l^{\rm th}$ {\it skeleton} $M^{\leq l}$ of $M \in \mod_\ba S$. 
The prototype of this idea is the skeletons of simplicial complexes and their Stanley-Reisner rings.   
Hence $M^{\leq l}$ is a quotient module of $M$ with $\dim_S M^{\leq l} \le l$. 
Using this notion, in Theorem~\ref{skeleton CM}, we show that Stanley's conjecture 
 holds for all $M \in \mod_{\ZZ^n} S$ 
if and only if it holds for all $M \in \mod_{\ZZ^n} S$ which are Cohen-Macaulay. 
The ideal version of this result has been obtained by Herzog  et al. \cite{HSZ}. 

For $\ba, \bb \in \NN^n$, $(-)^{\triangleleft \bb}$ denotes the composition 
$\sA_{\ba+\bb} \circ \sA_\ba: \mod_\ba S \to \mod_{\ba+\bb} S$   
(more precisely, the composition of $\sA_\ba: \mod_\ba S \to (\mod_\ba S)^\op$, 
the natural inclusion $(\mod_\ba S)^\op \embto (\mod_{\ba+\bb} S)^\op$, and 
$\sA_{\ba+\bb}: (\mod_{\ba+\bb} S)^\op \to \mod_{\ba+\bb} S$). 
For $M \in \mod_{\NN^n} S$, $M^{\triangleleft \bb}$ does not depend on the particular 
choice of $\ba$ with $M \in \mod_{\ba} S$. 
Since we have $\depth M = \depth M^{\triangleleft \bb}$ and $\sdepth M = \sdepth M^{\triangleleft \bb}$, 
Stanley's conjecture holds for $M$ if and only if it holds for $M^{\triangleleft \bb}$. 

{\it Generic} and {\it cogeneric} monomial ideals are interesting combinatorial 
classes introduced in \cite{BPS,Str}. Apel (\cite{A1,A2}) showed that 
if  a monomial ideal $I$ is generic 
then Stanley's conjecture  holds for $I$ itself and $S/I$.  
In Theorem~\ref{cogeneric}, we show that if $I$ is cogeneric then the conjecture holds for $S/I$.   
Under the additional assumption that $S/I$ is Cohen-Macaulay, this result has been proved 
in \cite{A2}. Roughly speaking, our proof reduces the assertion to the Cohen-Macaulay case (\cite{A2}) 
using techniques developed in \S\S 2--5 of the present paper. However, since the skeletons of 
(co)generic monomial ideals are no longer (co)generic, we need modification. 
We also remark that more inclusive definitions of (co)generic monomial ideals were given in \cite{MSY}, 
and Apel used these new definitions. 
However our proof of Theorem~\ref{cogeneric} works only for the original definition. 

\medskip

Most results in \S\S 2--4 are taken from the thesis \cite{O} of the first author. 
The authors are grateful to Professor J\"urgen Herzog for helpful comments.

\section{Preliminaries}
Let $S$, $\mod_{\ZZ^n} S$ and $\mod_{\NN^n} S$ be as defined in the beginning of the previous section. 
The definitions of Stanley decompositions and the Stanley depth are also given there. 
Let $\sd(M)$ be the set of Stanley decompositions of $M \in \mod_{\ZZ^n} S$. 
In this paper, we sometimes regard $M \in \mod_{\ZZ^n} S$ as just a 
$\ZZ^n$-graded $\kk$-vector space without saying so explicitly. However, the context makes the meaning clear.

We start this section from the following lemma.

\begin{lem}\label{sec:ex seq and sdepth}
Given an exact sequence
$$
0 \longto L \stackrel{f}{\longto} M \stackrel{g}{\longto} N \longto 0
$$
in $\mod_{\ZZ^n} S$, it follows that
$$
\sdepth M \ge \min\{\, \sdepth L, \, \sdepth N \, \}.
$$

In particular, for a direct sum $M= \bigoplus_{i=1}^s M_i$ in $\mod_{\ZZ^n} S$, 
we have 
\begin{equation}\label{direct sum}
\sdepth M \geq \min \{ \, \sdepth M_i \mid 1 \leq i \leq s \, \}.
\end{equation}
\end{lem}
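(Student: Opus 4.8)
The plan is to build a Stanley decomposition of $M$ by transporting optimal Stanley decompositions of $L$ and $N$ into $M$: the one for $L$ along the injection $f$, and the one for $N$ along a set-theoretic homogeneous lift of the surjection $g$.

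First I would fix $\cD_L = \bigoplus_{i=1}^p m_i\,\kk[Z_i] = L$ in $\sd(L)$ with $\sdepth\cD_L = \sdepth L$, and $\cD_N = \bigoplus_{j=1}^q n_j\,\kk[Y_j] = N$ in $\sd(N)$ with $\sdepth\cD_N = \sdepth N$. Since $f$ is an injective degree-preserving homomorphism, each $f(m_i)\,\kk[Z_i]$ is a Stanley space of $M$ isomorphic (as a graded $\kk[Z_i]$-module) to $m_i\,\kk[Z_i]$, and as $\ZZ^n$-graded $\kk$-vector spaces $\bigoplus_{i=1}^p f(m_i)\,\kk[Z_i] = f(L) = \Ker g$. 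For $N$, I would pick for each $j$ a homogeneous preimage $\widetilde n_j \in M$ of $n_j$ with $\deg\widetilde n_j = \deg n_j$. The key point is that $g$ restricts to a degree-preserving surjection $\widetilde n_j\,\kk[Y_j] \to n_j\,\kk[Y_j]$ which is moreover injective: if $p\cdot\widetilde n_j = 0$ for some $p \in \kk[Y_j]$, then $p\cdot n_j = g(p\cdot\widetilde n_j) = 0$, so $p = 0$ because $n_j\,\kk[Y_j]$ is $\kk[Y_j]$-free. Hence $\widetilde n_j\,\kk[Y_j]$ is itself a Stanley space of $M$, over the same $Y_j$.

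Next I would check that $\cD := \bigoplus_{i=1}^p f(m_i)\,\kk[Z_i] \oplus \bigoplus_{j=1}^q \widetilde n_j\,\kk[Y_j]$ is a Stanley decomposition of $M$. For spanning: given $m \in M$, write $g(m) = \sum_j p_j\, n_j$ with $p_j \in \kk[Y_j]$ using $\cD_N$; then $m - \sum_j p_j\,\widetilde n_j \in \Ker g = f(L) = \bigoplus_i f(m_i)\,\kk[Z_i]$, so $m$ lies in the sum. For directness: suppose $\sum_i u_i + \sum_j v_j = 0$ with $u_i \in f(m_i)\,\kk[Z_i]$ and $v_j \in \widetilde n_j\,\kk[Y_j]$; applying $g$ kills every $u_i$ (since $g\circ f = 0$) and yields $\sum_j g(v_j) = 0$ in $N$, whence each $g(v_j) = 0$ as $\cD_N$ is direct, hence each $v_j = 0$ by the injectivity established above, and then $\sum_i u_i = 0$ forces each $u_i = 0$ since $\cD_L$ is direct and $f$ injective. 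Finally $\sdepth\cD = \min\{\min_i \#Z_i,\ \min_j \#Y_j\} = \min\{\sdepth L, \sdepth N\}$, so $\sdepth M \ge \sdepth\cD = \min\{\sdepth L, \sdepth N\}$.

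The one genuinely substantive point — and the step I would be most careful about — is that $\widetilde n_j\,\kk[Y_j]$ is again a Stanley space, i.e. that a homogeneous lift of a free cyclic $\kk[Y_j]$-module stays free; this is exactly the injectivity argument above and uses nothing about the ambient module $M$. Everything else is bookkeeping. For the displayed inequality~\eqref{direct sum} I would simply iterate: any Stanley space of $M_i$ is a Stanley space of $M$ (the annihilator of an element is unchanged by the inclusion $M_i \embto M$), so the union of optimal Stanley decompositions of the $M_i$ is a Stanley decomposition of $M = \bigoplus_{i=1}^s M_i$ with Stanley depth $\min_i \sdepth M_i$; alternatively one applies the first part to the split short exact sequences $0 \to M_1 \to \bigoplus_{i=1}^s M_i \to \bigoplus_{i=2}^s M_i \to 0$ and inducts on $s$.
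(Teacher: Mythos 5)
Your proof is correct and follows essentially the same route as the paper: push an optimal decomposition of $L$ forward along $f$, lift one of $N$ along homogeneous preimages, and verify the result is a Stanley decomposition of $M$. You merely spell out the freeness of the lifted Stanley spaces and the directness/spanning checks that the paper dismisses as ``easy to see.''
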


\begin{proof}
Let $\cD_1 = \bigoplus_{i=1}^s l_i \, \kk[Z_i] \in \sd(L)$ and 
$\cD_2 = \bigoplus_{i=1}^t n_i \, \kk[Z'_i] \in \sd(N)$ be Stanley decompositions 
attaining the Stanley depths of each modules. For $1\leq i \leq s$, set $m_i:= f(l_i) \in M$. 
For $s+1 \le i \le s+t$, take a homogeneous element $m_i \in M$ so that $g(m_i)= n_{i-s}$, and 
set $Z_i:= Z'_{i-s}$.
Then it is easy to see that each $m_i \, \kk[Z_i]$ is a Stanley space and 
$\sum_{i=1}^{s+t} m_i \, \kk[Z_i]= \bigoplus_{i=1}^{s+t} m_i \, \kk[Z_i] =M$. 
Hence  $\cD:= \bigoplus_{i=1}^{s+t} m_i \, \kk[Z_i]$ is a Stanley decomposition of $M$, and we have
$\sdepth M \ge \sdepth \cD = \min \{ \, \sdepth \cD_1, \sdepth \cD_2 \, \} = 
\min \{ \, \sdepth L, \sdepth N \,\}$. 
\end{proof}

\begin{rem}
The reader might think the equality holds in \eqref{direct sum} and the proof is easy. 
However, as far as the authors know, 
even whether $\sdepth (M \oplus S) = \sdepth M$ always holds or not is an open problem. 
\end{rem}

As usual, for $M \in \mod_{\ZZ^n} S$ and $\ba \in \ZZ^n$, $M(\ba) \in \mod_{\ZZ^n} S$ denotes the 
degree shift of $M$ with $M(\ba)_\bb=M_{\ba+\bb}$. For any $M \in \mod_{\ZZ^n} S$, 
there is some $\ba$ such that  $M(\ba) \in \mod_{\NN^n} S$.  
While Stanley's conjecture (Conjecture~\ref{Stanley conj}) concerns modules in $\mod_{\ZZ^n} S$, 
we can restrict our attention to $\mod_{\NN^n} S$ since the degree shift preserves 
both the usual and Stanley depths. 

Here, we introduce the convention on $\NN^n$ used throughout the paper. 
The $i^{\rm th}$ coordinate of $\ba \in \NN^n$ is denote by $a_i$. 
Let $\succeq$ be the order on $\NN^n$ defined by 
$\ba \succeq \bb \ \Iff \ a_i \ge b_i$ for all $i$. 
Clearly, $\zero:=(0,0, \ldots, 0) \in \NN^n$ is the smallest element.  
For $\ba,\bb \in \NN^n$,
let $\ba \vee \bb$, $\ba \land \bb$ be the elements of $\NN^n$ whose $i^{\text{th}}$-coordinates are
$\max\mbra{a_i,b_i}$, $\min\mbra{a_i,b_i}$ respectively.  If $\ba \succeq \bb$, we set
$\bra{\ba,\bb} := \set{\bc \in \NN^n}{\ba \preceq \bc \preceq \bb}.$

For $\ba, \bb \in \NN^n$, set
$$
\supp^\ba(\bb) := \set{i}{b_i \ge a_i}, \qquad \supp_X^\ba(\bb) := \set{x_i}{b_i \ge a_i}.
$$
For the simplicity, $\supp^\one(\bb) = \set{i}{b_i \ge 1}$ is denoted by $\supp(\bb)$, where 
$\one:=(1,1,\ldots, 1) \in \NN^n$. 
For a homogeneous element $0 \ne m \in M_\bb$ of $M=\bigoplus_{\ba \in \NN^n}M_\ba$, 
set $\deg(m)=\bb$, and $\supp^\ba(\deg(m))$ is simply denoted by $\supp^\ba(m)$. 
The monomial $\prod_{i=1}^n x_i^{a_i} \in S$ is denoted by $x^\ba$.

\begin{dfn}[Miller \cite{M}]
Let $\ba \in \NN^n$. 
We say a $\ZZ^n$-graded $S$-module $M$ is {\it positively $\ba$-determined}, if it is
finitely generated,  $\NN^n$-graded, and the multiplication 
map $M_\bb \ni m \longmapsto x_i m \in M_{\bb + \be_i}$   
is bijective for all $\bb \in \NN^n$ and all $i \in \supp^\ba(\bb)$. 
Here $\be_i \in \NN^n$ denotes the $i^{\rm th}$ unit vector. 
\end{dfn}

Let $\mod_\ba S$ be the full subcategory of $\mod_{\NN^n} S$ consisting 
of positively $\ba$-determined modules.  
If $\ba' \succeq \ba$, we have $\mod_{\ba'} S \supset \mod_\ba S$. 
Any $M \in \mod_{\NN^n} S$ is positively $\ba$-determined 
for sufficiently large $\ba \in \NN^n$. For example, a monomial ideal $I \subset S$ 
minimally generated by $x^{\ba_1}, x^{\ba_2}, \ldots, x^{\ba_r}$ is positively 
$\ba$-determined if and only if $\ba \succeq (\ba_1 \vee \ba_2 \vee \cdots \vee \ba_r)$.  

If $M \in \mod_\ba S$, the essential information of $M$ appears in the  
subspace $M_{[\zero, \ba]} := \bigoplus_{\bb \in [\zero, \ba]} M_\bb$. 
For example, we have 
\begin{align*}
\dim_S M &= \max \{ \, \# \supp^\ba (\bb) \mid \bb \in \NN^n, M_\bb \ne 0 \, \}\\
&= \max \{ \, \# \supp^\ba (\bb) \mid \bb \in [\zero, \ba], M_\bb \ne 0 \, \}. 
\end{align*}

Let $M,N \in \mod_{\ZZ^n} S$. If there is a $\ZZ^n$-graded $\kk$-linear bijection $f:M_{[\zero, \ba]} \to N_{[\zero, \ba]}$ 
satisfying $f(x^{\bd-\be} y)=x^{\bd-\be} \cdot f(y)$ for all $\bd, \be \in [\zero, \ba]$ with $\bd \succeq \be$ and all $y \in M_\be$, 
we say $M_{[\zero, \ba]}$ and $N_{[\zero, \ba]}$ are isomorphic (over $S$).  
If $M,N \in \mod_\ba S$ and $M_{[\zero, \ba]} \cong N_{[\zero, \ba]}$, we have $M \cong N$.

Recall that, for $Z \subset \{ x_1, \ldots, x_n \}$, $\kk[Z]$ denotes 
the $\kk$-subalgebra of $S$ generated by all $x_i \in Z$. 
To make $\kk[Z]$ an $S$-module, set $x_i \cdot \kk[Z]=0$ for all $x_i \not \in Z$. 
In other words, $\kk[Z] \cong S/(x_i \mid x_i \not \in Z)$. 
When we regard a Stanley decomposition $\cD= \bigoplus_{i=1}^s m_i  \, \kk[Z_i]$ of $M$ 
as an $S$-module, it is denoted by $|\cD|$.   
We say $\cD$ is {\it positively $\ba$-determined}, if the module $|\cD|$ is positively $\ba$-determined, 
equivalently, $\zero \preceq \deg(m_i) \preceq \ba$ and $\supp^\ba_X(m_i) \subset Z_i$ 
for all $1 \leq i \leq s$. 
If $M$ admits such a decomposition, then $M$ itself is positively $\ba$-determined. 
For $M \in \mod_\ba S$, let $\sd_\ba(M)$ be the set of positively $\ba$-determined Stanley 
decompositions of $M$. If $M \in \mod_\ba S$, then 
$\sd_{\ba'} (M) \supset \sd_\ba (M)$ for $\ba' \in \NN^n$ with $\ba' \succeq \ba$, and  
$$\sd(M)=\bigcup_{\ba \in \NN^n} \sd_\ba (M).$$

\begin{prop}\label{sda}
For $M \in \mod_\ba S$, we have
$$\sdepth M = \max \{ \, \sdepth \cD \mid \cD \in \sd_\ba(M) \, \}.$$
\end{prop}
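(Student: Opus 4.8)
One inequality, $\sdepth M\ge\max\{\,\sdepth\cD\mid\cD\in\sd_\ba(M)\,\}$, is immediate from $\sd_\ba(M)\subseteq\sd(M)$, so the plan is to prove the reverse. Fix $\cD=\bigoplus_{j=1}^{s}m_j\,\kk[Z_j]\in\sd(M)$ with $\sdepth\cD=\sdepth M$; since $\sd(M)=\bigcup_{\bc\in\NN^n}\sd_\bc(M)$ we have $\cD\in\sd_\bc(M)$ for some $\bc\succeq\ba$, and it suffices to produce $\cD'\in\sd_\ba(M)$ with $\sdepth\cD'\ge\sdepth\cD$. I would obtain this by induction on $\sum_i(c_i-a_i)$, so the whole proof reduces to the following one‑step statement: \emph{if $c_i\ge1$ and $M\in\mod_{\bc-\be_i}S$, then every $\cD\in\sd_\bc(M)$ admits $\cD'\in\sd_{\bc-\be_i}(M)$ with $\sdepth\cD'\ge\sdepth\cD$.} In the inductive step one chooses any $i$ with $c_i>a_i$; then $\bc-\be_i\succeq\ba$, so $M\in\mod_{\bc-\be_i}S$, and one replaces $\bc$ by $\bc-\be_i$.

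For the one‑step statement, put $t:=c_i-1$, so the hypothesis $M\in\mod_{\bc-\be_i}S$ says precisely that $x_i\colon M_\bb\to M_{\bb+\be_i}$ is bijective whenever $b_i\ge t$. I would sort the Stanley spaces $m_j\,\kk[Z_j]$ of $\cD$ by the value $(\deg m_j)_i$: put $j\in A$ if $(\deg m_j)_i<t$, or $(\deg m_j)_i=t$ and $x_i\in Z_j$; put $j\in B$ if $(\deg m_j)_i=t$ and $x_i\notin Z_j$; put $j\in C$ if $(\deg m_j)_i=t+1=c_i$ (then positive $\bc$‑determinacy of $\cD$ forces $x_i\in Z_j$, and every degree occurring in $m_j\,\kk[Z_j]$ has $i$‑th coordinate $\ge t+1$). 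Let $\cD'$ arise from $\cD$ by discarding the spaces indexed by $C$, keeping those indexed by $A$, and replacing $m_j\,\kk[Z_j]$ by $m_j\,\kk[Z_j\cup\{x_i\}]$ for each $j\in B$.

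Three things then need checking. (i) For $j\in B$, $m_j\,\kk[Z_j\cup\{x_i\}]$ is again a Stanley space: $\ann(m_j)$ is a monomial ideal contained in $(x_k\mid x_k\notin Z_j)$, and since $(\deg m_j)_i=t$ every $x^{\be'}m_j$ with $\supp(\be')\subseteq Z_j$ is nonzero and is not annihilated by any power of $x_i$; hence $\ann(m_j)$ contains no monomial lying in $\kk[Z_j\cup\{x_i\}]$, i.e.\ $\ann(m_j)\subseteq(x_k\mid x_k\notin Z_j\cup\{x_i\})$. (ii) $\cD'$ is positively $(\bc-\be_i)$‑determined; this is routine from the definitions, the reason for adjoining $x_i$ in the $B$‑case being exactly that $x_i\in\supp^{\bc-\be_i}_X(m_j)$ there. (iii) $\cD'$ is still a direct‑sum decomposition of $M$. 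For $j\in B$ one has $(m_j\,\kk[Z_j\cup\{x_i\}])_{[\zero,\bc-\be_i]}=(m_j\,\kk[Z_j])_{[\zero,\bc-\be_i]}$ (a degree $\deg m_j+\be$ lying in $[\zero,\bc-\be_i]$ must have $\be_i=0$), while $(m_j\,\kk[Z_j])_{[\zero,\bc-\be_i]}=0$ for $j\in C$; hence $|\cD'|$ is positively $(\bc-\be_i)$‑determined and $|\cD'|_{[\zero,\bc-\be_i]}\cong M_{[\zero,\bc-\be_i]}$ over $S$, so $|\cD'|\cong M$ by the fact recalled above that such a module is determined by its truncation to $[\zero,\bc-\be_i]$; transporting $\cD'$ through this isomorphism gives the desired member of $\sd_{\bc-\be_i}(M)$. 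One can instead verify (iii) by hand, one $x_i$‑slab at a time, using that $x_i$ is bijective on $M$ in degrees with $i$‑th coordinate $\ge t$: the slabs below $t$ are untouched, the slab at $t$ is unchanged, and each slab above $t$ is the $x_i$‑image of the slab at $t$, so the new pieces exhaust it.

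Finally $\sdepth\cD'\ge\sdepth\cD$ holds because passing to $\cD'$ only deletes Stanley spaces and enlarges some of the sets $Z_j$. The hard part is point (iii): replacing the $B$‑indexed spaces makes them stick out of the box $[\zero,\bc-\be_i]$, and one must see that these enlargements exactly compensate for the deleted $C$‑indexed spaces. This works only because $M$ is positively $(\bc-\be_i)$‑determined, i.e.\ because $M$ is recovered from $M_{[\zero,\bc-\be_i]}$; without that hypothesis the construction collapses (one cannot in general lower the generators of a Stanley decomposition without breaking directness).
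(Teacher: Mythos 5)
Your proof is correct and takes essentially the same approach as the paper: starting from an arbitrary $\cD\in\sd(M)$, one discards the Stanley spaces whose generator degree does not lie in $[\zero,\ba]$, enlarges the remaining $Z_j$ by $\supp^\ba_X(m_j)$, and uses positive $\ba$-determinacy of $M$ to see the modified sum is again a Stanley decomposition. The paper does this in a single step (setting $\cD'=\bigoplus m_i\,\kk[Z_i\cup\supp^\ba_X(m_i)]$ over the surviving indices), whereas you descend one coordinate at a time by induction on $\sum_i(c_i-a_i)$; the resulting $\cD'$ and the role of the determinacy hypothesis are identical, and your version simply spells out the verification (freeness, $\ba$-determinacy, directness) that the paper leaves terse.
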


If $M$ is a squarefree module (i.e., if $\ba =\one$), the above result has been proved by
Soleyman Jahan (\cite[Theorem~3.4]{SoJ})

\begin{proof}
Since $\sd_\ba(M) \subset \sd(M)$, the inequality  
$\sdepth M \geq \max \{ \, \sdepth \cD \mid \cD \in \sd_\ba (M)\}$ is clear.  
To prove the converse inequality, from $\cD= \bigoplus_{i=1}^s m_i \, \kk[Z_i] \in \sd (M)$, 
we will construct $\cD' \in \sd_{\ba}(M)$ with 
$\sdepth \cD' \geq \sdepth \cD$.  
We may assume that $\deg(m_i) \preceq \ba$ for all $1 \leq i \leq t$, and 
$\deg(m_i) \not \preceq \ba$ for all $i > t$.
Set $$\cD' := \bigoplus_{i=1}^t m_i \,\kk[Z_i \cup \supp^\ba_X(m_i)].$$ 
Then  $m_i \,\kk[Z_i \cup \supp^\ba_X(m_i)]$ is a Stanley space for each $i$. 
Since $|\cD'|_{[\zero, \ba]} \cong |\cD|_{[\zero, \ba]}$ and $M \in \mod_\ba S$, 
we have $\cD' \in \sd_\ba (M)$. 
It is clear that $\sdepth \cD' \geq \sdepth \cD$. 
\end{proof}

For $M \in \mod_{\ZZ^n} S$ and $\bb \in\ZZ^n$, 
let $\gb_{i,\bb}(M):=\dim_\kk (\opn{Tor}_i^S(\kk, M))_\bb$ be the $(i,\bb)^{\text{th}}$ 
graded betti number of $M$. 

\begin{dfn}[\cite{M}]
For $M \in \mod_{\NN^n} S$, the {\it support regularity} of $M$ is 
$$
\sreg(M) := \max\set{\# \supp(\bb) - i}{\gb_{i,\bb}(M) \neq 0}. 
$$
\end{dfn}

\begin{rem}\label{suupp reg basic}
The inequalities in \cite[Corollary~20.19]{E}, which is a basic property of 
the usual (Castelnuovo-Mumford) regularity 
$$
\reg_S(M) := \max\set{j - i}{\gb_{i,j}(M) \neq 0} 
$$
of a finitely generated $\ZZ$-graded $S$-module $M$,  
also holds for the support regularity. In the proof in \cite{E}, the long exact sequence of 
$\Ext_S^i(-,S)$ is used to handle the regularities,  but we can use that of 
$\opn{Tor}_i^S(-, \kk)$. Then the same argument works for the support regularity. 
\end{rem}

Miller (\cite{M}) introduced the {\it Alexander duality functor} $\sA_\ba : \mod_\ba S \to 
(\mod_\ba S)^\op$, which is an exact functor with $(\sA_\ba)^2=\opn{Id}$.  
For  $M \in \mod_\ba S$, $\bb \in [\zero, \ba]$ and $i \in \supp (\bb)$, 
we have $(\sA_{\ba}(M))_\bb = \Hom_\kk (M_{\ba-\bb}, \kk)$ 
and the multiplication map $(\sA_\ba(M))_{\bb - \be_i} \ni y \longmapsto x_i y \in (\sA_\ba(M))_\bb$ is the 
$\kk$-dual of $M_{\ba-\bb} \ni z \longmapsto x_i z \in M_{\ba-\bb+\be_i}$. 
We have that 
$$
\dim_S (\sA_\ba(M)) + \gs(M) =n,
$$ 
where 
$$
\gs(M):= \min \{ \, \# \supp(\bb) \mid M_\bb \ne 0 \, \}. 
$$
See \cite{M} for further information. 
In the sequel, we sometimes omit the suffix $\ba$ of $\sA_\ba$,  
if the explicit value of $\ba$ is not important.

\begin{thm}[{\cite[Theorem 4.20]{M}}]\label{sreg}
For $M \in \mod_\ba S$, we have
$$
\sreg(M)+\depth (\sA_\ba(M))=n.
$$
\end{thm}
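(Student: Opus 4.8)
The plan is to deduce this from the analogous statement for ordinary Castelnuovo--Mumford regularity and depth, by reducing to the squarefree case via a ``polarization-type'' device, or else to prove it directly from the definition of $\sA_\ba$ together with the behaviour of $\opn{Tor}$ under Alexander duality. The cleanest route seems to be the direct one: combine the fact that $\sA_\ba$ is exact with $(\sA_\ba)^2 = \opn{Id}$, the explicit description of the graded pieces $(\sA_\ba(M))_\bb = \Hom_\kk(M_{\ba-\bb},\kk)$, and a minimal free resolution argument. First I would take a minimal $\ZZ^n$-graded free resolution $F_\bullet \to M$, apply $\sA_\ba$ to obtain (after truncating/extending appropriately within $\mod_\ba S$) a complex of Alexander duals of the free modules $S(-\bc)$ occurring in $F_\bullet$, and identify $\sA_\ba(S(-\bc))$ explicitly --- it should be (a shift of) $\kk[Z]$ for $Z = \supp_X^\ba(\ba - \bc)$ or something of that shape, i.e.\ a ``Stanley space'' building block. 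The key numerical input is that a free summand $S(-\bc)$ in homological degree $i$ of the resolution of $M$ contributes, under duality, to a term whose support size is $\# \supp(\bc)$ and whose homological position measures the depth of the dual; the bookkeeping $\#\supp(\bb) - i$ on one side should match $n - \depth$ on the other.

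More concretely, I would argue as follows. By Theorem~\ref{sreg}'s sibling (the Auslander--Buchsbaum-type translation), Miller's local-cohomology/Ext description of $\sA_\ba$ gives, for the minimal injective-type resolution or via the Cartan--Eilenberg comparison, an equality relating $\beta_{i,\bb}(M)$ to the local cohomology of $\sA_\ba(M)$: roughly, the nonvanishing of $\beta_{i,\bb}(M)$ with $\#\supp(\bb) = n - j$ corresponds to the nonvanishing of a graded piece of $H^j_\fm(\sA_\ba(M))$, hence $\min\{j : H^j_\fm(\sA_\ba(M)) \ne 0\} = \depth(\sA_\ba(M))$ is governed by $\max\{\#\supp(\bb) - i\} = \sreg(M)$ via $j = n - \#\supp(\bb) + i$. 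Taking the extreme case on each side yields $\sreg(M) = n - \depth(\sA_\ba(M))$. Alternatively, since the statement is already attributed to Miller as \cite[Theorem 4.20]{M}, the honest ``proof'' is to cite the correspondence between the minimal free resolution of $M$ and the minimal \emph{injective} (or cellular/co-cellular) resolution of $\sA_\ba(M)$ established in \cite{M}, and then invoke Gr\"obner/Terai-type duality $\sreg(M) = \opn{pd}$-type invariant of the dual, combined with Auslander--Buchsbaum $\depth + \opn{pd} = n$ for the relevant dual module; I would spell out exactly which theorem of \cite{M} furnishes the resolution correspondence and check the degree bookkeeping matches $\#\supp$ rather than total degree.

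The main obstacle, I expect, is the careful translation between the two ``sizes'': ordinary regularity uses $j$ in $S(-j)$ with $j \in \ZZ$, whereas $\sreg$ uses $\#\supp(\bb)$ with $\bb \in \NN^n$, and one must verify that under $\sA_\ba$ a generator in multidegree $\bb$ of homological degree $i$ dualizes to a contribution in the ``colength'' $n - \#\supp(\bb) + i$ position of the dual's resolution --- keeping track of the shift by $\ba$ and the fact that $\sA_\ba$ restricted to free modules does not land in free modules but in the Stanley-space building blocks $\kk[Z]$. Handling the non-minimality that may arise after applying $\sA_\ba$ (one must pass to a minimal resolution of $\sA_\ba(M)$ and argue no cancellation affects the extreme values) is the delicate point; Remark~\ref{suupp reg basic} (that the $\opn{Tor}$-version of \cite[Corollary~20.19]{E} holds for $\sreg$) is the tool that lets one push this through by the same inductive argument Eisenbud uses for ordinary regularity, now applied to a short exact sequence realizing $\sA_\ba(M)$ as a syzygy-type module. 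I would therefore structure the proof as an induction on $\depth(\sA_\ba(M))$ (equivalently on $n - \sreg(M)$) using a carefully chosen short exact sequence in $\mod_\ba S$ to which both Lemma~\ref{sec:ex seq and sdepth}-style estimates and the additivity properties of $\sreg$ apply, with the base case $\depth = 0$ amounting to the observation that $\sreg(M) = n$ exactly when $M$ has a generator supported on all variables, which is self-dual under $\sA_\ba$ in the appropriate sense.
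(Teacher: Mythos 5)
The paper does not prove this statement: it is quoted verbatim and attributed to Miller~\cite[Theorem~4.20]{M}, with no argument supplied. Your proposal is therefore a reconstruction attempt rather than a rendering of anything the authors wrote, and as a reconstruction it circles the right ideas but never lands the essential step. The content of the theorem, after Auslander--Buchsbaum, is the equality $\sreg(M) = \opn{pd}(\sA_\ba(M))$, which rests on a precise correspondence between the $\ZZ^n$-graded Betti numbers of $M$ and those of $\sA_\ba(M)$. Writing ``nonvanishing of $\beta_{i,\bb}(M)$ corresponds to nonvanishing of $H^{\,j}_\fm(\sA_\ba(M))$ with $j = n - \#\supp(\bb) + i$, hence the extremes match'' is restating the theorem as a black box, not proving it. The obstruction you flag -- that $\sA_\ba$ does not preserve freeness, so $\sA_\ba(F_\bullet)$ is not a free resolution of $\sA_\ba(M)$ -- is exactly where the real work lies: one must convert the complex of non-free modules into a genuine minimal free (or irreducible/injective) resolution and then rule out cancellation at the extreme homological degree. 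Your text names this ``the delicate point'' and offers only that Remark~\ref{suupp reg basic} ``should let one push this through,'' which is a hope, not an argument; that remark only transfers Eisenbud's regularity inequalities to $\sreg$ and does nothing to relate the two sides of the duality. Note also that $\sA_\ba(S(-\bc))$ is not a Stanley space $\kk[Z]$ as you guess: by Lemma~\ref{k[c,b] alex dual} it is $\kk_\ba[\zero,\ba-\bc] \cong S/(x_i^{a_i-c_i+1} : c_i > 0)$, a box-shaped quotient, and only the numerical datum $\#\supp(\bc)$ -- not the module structure -- survives in the form you want.

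You do observe that ``the honest proof is to cite Miller,'' and that is precisely what the paper does. If you want to see what an actual proof in this circle of ideas looks like at the level of this paper, compare Theorem~\ref{shreg}, the Stanley-depth analogue $\shreg(M) + \sdepth(\sA_\ba(M)) = n$: there the authors prove the equality by an explicit degree-preserving bijection between quasi Stanley decompositions of $M$ and of $\sA_\ba(M)$, a purely combinatorial argument with none of the homological overhead your sketch invokes. The contrast illustrates why Theorem~\ref{sreg} is imported rather than reproved here.
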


Note that $\sreg (M) \geq \gs(M)$ for all $M \in \mod_\ba S$. 
By Theorem~\ref{sreg}, $\sreg (M) = \gs(M)$ if and only if 
$\sA_\ba(M)$ is Cohen-Macaulay.

\section{Alexander duality and (quasi) Stanley decomposition}
For $\ba, \bb, \bc \in \NN^n$ with $\bc \preceq \bb \preceq \ba$, 
we set
\begin{eqnarray*}
\kk_{\ba}[\bc, \bb] &:=& x^\bc \cdot (S/(x_i^{b_i + 1} \mid i \not \in \supp^\ba(\bb)))\\
 &\cong &(S/(x_i^{b_i -c_i+ 1} \mid i \not \in \supp^\ba(\bb)))(-\bc).
\end{eqnarray*}
This is an ideal of $S/(x_i^{b_i + 1} \mid i \not \in \supp^\ba(\bb))$. 
Set $$[\![\bc, \bb ]\!]_\ba := \{ \, \bd \in \NN^n \mid  (\kk_{\ba}[\bc, \bb])_\bd \ne 0 \, \}.$$
We see that $\bd \in [\![\bc, \bb ]\!]_\ba$ if and only if $\bd \succeq \bc$ and 
$d_i \leq b_i$ for all $i \not \in \supp^\ba(\bb)$. 
For $\bd \in  [\![\bc, \bb ]\!]_\ba$, the natural image of the monomial 
$x^\bd \in S$  in $\kk_\ba[\bc, \bb] \subset S/(x_i^{b_i + 1} \mid i \not \in \supp^\ba(\bb))$ is denoted by 
by $\bax^\bd$. (This is an abuse of notation, since the symbol $\bax^\bd$ ignores $\bb$ and $\bc$.) 
It is easy to check that $\kk_\ba[\bc,\bb] \in \mod_\ba S$ with 
\begin{equation}\label{k[c,b] support}
(\kk_{\ba}[\bc, \bb])_{[\zero, \ba]} = \bigoplus_{\bd \in [\bc, \bb]} \kk \, \bax^\bd.  
\end{equation}

\begin{lem}\label{k[c,b] alex dual}
We have $\sA_\ba(\kk_\ba[\bc, \bb])\cong \kk_\ba[\ba-\bb, \ba-\bc]$.
\end{lem}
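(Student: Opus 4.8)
The plan is to compute both sides of the claimed isomorphism on the subspace indexed by $[\zero,\ba]$, using the explicit description of the Alexander duality functor recalled before Theorem~\ref{sreg}, and then invoke the fact (stated in \S2) that two positively $\ba$-determined modules agreeing on $[\zero,\ba]$ are isomorphic. By \eqref{k[c,b] support} we have $(\kk_\ba[\bc,\bb])_\bd = \kk\,\bax^\bd$ for $\bd \in [\bc,\bb]$ and $0$ otherwise (within $[\zero,\ba]$), so in particular $\dim_\kk(\kk_\ba[\bc,\bb])_\bd \le 1$ always. Since $\sA_\ba$ preserves membership in $\mod_\ba S$, it suffices to identify, for each $\bd \in [\zero,\ba]$, when $(\sA_\ba(\kk_\ba[\bc,\bb]))_\bd \ne 0$ and to match this with the analogous condition for $\kk_\ba[\ba-\bb,\ba-\bc]$.

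First I would unwind the formula $(\sA_\ba(M))_\bd = \Hom_\kk(M_{\ba-\bd},\kk)$, valid for $\bd \in [\zero,\ba]$: this gives $(\sA_\ba(\kk_\ba[\bc,\bb]))_\bd \ne 0$ if and only if $\ba-\bd \in [\bc,\bb]$, i.e. $\bc \preceq \ba-\bd \preceq \bb$, which is equivalent to $\ba-\bb \preceq \bd \preceq \ba-\bc$, i.e. $\bd \in [\ba-\bb,\ba-\bc]$. On the other hand, by \eqref{k[c,b] support} applied to $\kk_\ba[\ba-\bb,\ba-\bc]$, its degree-$\bd$ component (for $\bd \in [\zero,\ba]$) is nonzero exactly when $\bd \in [\ba-\bb,\ba-\bc]$. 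So the two modules have the same support in $[\zero,\ba]$, and each graded piece is at most one-dimensional; fix a choice of basis vector in each nonzero piece on both sides.

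Next I would check that the $S$-module structures match, i.e. produce the required $\kk$-linear bijection $f$ commuting with multiplication by monomials $x^{\bd'-\bd}$ for $\bd \preceq \bd'$ in $[\zero,\ba]$. For this one uses the second half of the description of $\sA_\ba$: the multiplication map $(\sA_\ba(M))_{\bd-\be_i} \to (\sA_\ba(M))_\bd$ is the $\kk$-dual of $M_{\ba-\bd} \to M_{\ba-\bd+\be_i}$. For $M = \kk_\ba[\bc,\bb]$ the latter map is an isomorphism $\kk\,\bax^{\ba-\bd} \to \kk\,\bax^{\ba-\bd+\be_i}$ precisely when both degrees lie in $[\bc,\bb]$ — and one checks this happens iff $i \in \supp^\ba(\bd)$ or $\bd, \bd-\be_i$ both lie in $[\ba-\bb,\ba-\bc]$, which is exactly the condition under which multiplication by $x_i$ is an isomorphism on $\kk_\ba[\ba-\bb,\ba-\bc]$ in those degrees (from its presentation as $x^{\ba-\bb}\cdot S/(x_i^{(\ba-\bc)_i+1} \mid i \notin \supp^\ba(\ba-\bc))$). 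Assembling these one-dimensional identifications compatibly gives $f$.

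The main obstacle, I expect, is purely bookkeeping: keeping the two ``centers'' $\ba$ straight (the Alexander duality on the left uses center $\ba$, and the target $\kk_\ba[\ba-\bb,\ba-\bc]$ is again positively $\ba$-determined), and correctly translating the exponent bound $x_i^{b_i+1}$ in the defining quotient for $\kk_\ba[\bc,\bb]$ into the bound $x_i^{(\ba-c_i)-(\ba-b_i)+1} = x_i^{b_i - c_i + 1}$ appearing for $\kk_\ba[\ba-\bb,\ba-\bc]$, matching the shift by $x^{\ba-\bb}$ against $x^\bc$. There is no deep idea beyond the explicit formulas for $\sA_\ba$; the care lies in verifying that the combinatorial condition ``$\ba-\bd \in [\bc,\bb]$'' transforms correctly and that the multiplication maps dualize to the expected ones degree by degree. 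Once the bijection $f$ on $[\zero,\ba]$ is in hand and shown $S$-linear, the isomorphism of modules follows from the remark that $M_{[\zero,\ba]} \cong N_{[\zero,\ba]}$ implies $M \cong N$ for $M,N \in \mod_\ba S$.
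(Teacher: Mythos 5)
Your proposal is correct and follows essentially the same route as the paper: identify both modules' $[\zero,\ba]$-supports as $[\ba-\bb,\ba-\bc]$ using $(\sA_\ba M)_\bd = \Hom_\kk(M_{\ba-\bd},\kk)$ and \eqref{k[c,b] support}, verify the multiplication maps dualize correctly, and conclude via the remark in \S 2 that agreement on $[\zero,\ba]$ forces isomorphism for positively $\ba$-determined modules. (The paper phrases the $S$-linearity check compactly through the dual basis $t_\bd$ with $x^{\bd-\be}\cdot t_{\ba-\bd}=t_{\ba-\be}$, while you verify multiplication by each $x_i$ degree by degree; these are the same check. One small slip: the clause ``iff $i\in\supp^\ba(\bd)$ or \dots'' is spurious --- for $\bd\in[\zero,\ba]$ with $i\in\supp(\bd)$, the map on $\kk_\ba[\bc,\bb]$ is an isomorphism precisely when $\ba-\bd$ and $\ba-\bd+\be_i$ both lie in $[\bc,\bb]$, i.e. when $\bd$ and $\bd-\be_i$ both lie in $[\ba-\bb,\ba-\bc]$, with no extra disjunct needed; this does not affect the validity of the argument.)
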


\begin{proof}
By \eqref{k[c,b] support}, we have 
$$(\sA_\ba(\kk_\ba[\bc, \bb]))_{[\zero, \ba]} =  \bigoplus_{\bd \in [\ba-\bb, \ba-\bc]} \kk \, t_\bd$$
as a $\ZZ^n$-graded $\kk$-vector space, where $t_\bd$ is the dual base of 
$\bax^{\ba-\bd} \in (\kk_\ba[\bc, \bb])_{\ba-\bd}$ and has the degree  $\deg(t_\bd)=\bd$. 
For $\bd, \be \in [\bc, \bb]$ with $\bd \succeq \be$, we have $x^{\bd-\be} \cdot \bax^\be=\bax^\bd$ 
in $\kk_\ba[\bc, \bb]$. Hence we have $x^{\bd-\be} \cdot t_{\ba-\bd}=t_{\ba-\be}$ in 
$\sA_\ba(\kk_\ba[\bc, \bb])$.  It follows that $(\sA_\ba(\kk_\ba[\bc, \bb]))_{[\zero, \ba]}\cong 
(\kk_\ba[\ba-\bb, \ba-\bc])_{[\zero, \ba]}$. Since both $\sA_\ba(\kk_\ba[\bc, \bb])$ and  
$\kk_\ba[\ba-\bb, \ba-\bc]$ are positively $\ba$-determined, we have 
$\sA_\ba(\kk_\ba[\bc, \bb])\cong \kk_\ba[\ba-\bb, \ba-\bc]$.
\end{proof}

\begin{dfn}
Let $M \in \mod_{\NN^n} S$. We say $f: \bigoplus_{i=1}^s \kk_\ba[\bc_i, \bb_i] \to M$ is a 
{\it (positively $\ba$-determined) quasi Stanley decomposition}, 
if $f$ is a $\ZZ^n$-graded bijective $\kk$-linear map 
such that $f(\bax^\bd)= x^{\bd-\bc_i} \cdot f(\bax^{\bc_i})$ for all $i$ and 
all $\bax^\bd \in \kk_\ba[\bc_i, \bb_i]$ with $\bd \in  [\![\bc_i, \bb_i ]\!]_\ba$.  
\end{dfn}

Let $\qsd_\ba (M)$ be the set of positively $\ba$-determined quasi Stanley 
decompositions of $M$. For a decomposition 
$f: \cD \to M$, $\cD =\bigoplus_{i=1}^s \kk_\ba[\bc_i, \bb_i]$, we write  
$(\cD, f) \in \qsd_\ba (M)$ or just $\cD \in \qsd_\ba (M)$. 
If $\qsd_\ba (M) \ne \emptyset$, then $M \in \mod_\ba S$. 
Conversely, if $M \in \mod_\ba S$, then we can replace the condition 
$\bd \in  [\![\bc_i, \bb_i ]\!]_\ba$ by $\bd \in  [\bc_i, \bb_i]$ 
in the above definition. 
Let $f_i$ be the restriction of the map $f:\cD \to M$ to $\kk_\ba[\bc_i, \bb_i]$. 
Note that $f_i$ is just a $\poly{\supp^\ba_X(\bb_i)}$-homomorphism, and
{\it not} a $\kk[\supp_X (\bb_i)]$-homomorphism. See Example~\ref{qsd exmp} below.

For $M \in \mod_\ba S$, $\sd_\ba(M)$ can be seen as a subset of $\qsd_\ba(M)$ in the natural way. 
In fact, for $\bigoplus_{i=1}^sm_i \, \kk[Z_i] \in \sd_\ba(M)$, 
set $\bc_i := \deg(m_i) \in \NN^n$ (since the decomposition is positively $\ba$-determined, 
we have $(c_i)_j < a_j$ for all $j \not \in Z_i$), and 
take $\bb_i \in \NN^n$ whose $j^{\rm th}$ coordinate is 
\begin{equation}\label{abc}
(b_i)_j =\begin{cases}
a_j & \text{if $j \in Z_i$,} \\
(c_i)_j & \text{otherwise.}
\end{cases}
\end{equation}
Finally, define $f: \bigoplus_{i=1}^s \kk_\ba[\bc_i, \bb_i] \to M$ by 
$\kk_\ba[\bc_i, \bb_i] \ni \bax^\bd  \longmapsto x^{\bd -\bc_i}  \cdot m_i \in M$ 
for $\bd \in  [\![\bc_i, \bb_i ]\!]_\ba$. 
Then we have $(\bigoplus_{i=1}^s \kk_\ba[\bc_i, \bb_i], f) \in \qsd_\ba(M)$.  

In the sequel, for $\bb,\bc \in [\zero, \ba]$ satisfying the same condition as \eqref{abc},   
$\kk_\ba[\bc, \bb]$ is denoted by $x^{\bc} \, \kk[ \supp^\ba_X(\bb)]$. 

\begin{exmp}\label{qsd exmp}
Let $I:= (x^3, x^2y)$ be a monomial ideal of $S:= \kk[x,y]$, and 
set $\ba:= (3,1)$. Then $S/I \in \mod_\ba S$ and $\{ \, y^l, xy^m, x^2 \mid l, m \in \NN \, \}$ 
is a $\kk$-basis of $S/I$. It is easy to check that 
$$\kk_\ba [\, \zero, (1,1) \, ] \oplus \kk_\ba[\, (2,0),(2,0) \,]$$
is a quasi Stanley decomposition of $S/I$, but not a Stanley decomposition. 
Note that $\kk_\ba[\, \zero, (1,1) \, ] \cong S/(x^2)$ 
and $\kk_\ba[\, (2,0),(2,0) \,] \cong \kk(-(2,0))$. 
While $\supp_X((1,1))= \{x,y\}$, the corresponding map  $S/(x^2) \to S/I$ 
is not an $S$-homomorphism (just a $\kk[y]$-homomorphism).  
\end{exmp}

\begin{lem}\label{k[b,c] basic} 
Let $\ba, \bb, \bc \in \NN^n$ with $\bc \preceq \bb \preceq \ba$. 
Then $$\sdepth (\kk_\ba[\bc,\bb])= \# \supp^\ba(\bb).$$   
\end{lem}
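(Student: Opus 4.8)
The plan is to establish the two inequalities $\sdepth(\kk_\ba[\bc,\bb]) \geq \#\supp^\ba(\bb)$ and $\sdepth(\kk_\ba[\bc,\bb]) \leq \#\supp^\ba(\bb)$ separately. For the lower bound, the natural idea is to exhibit a single Stanley space that already realizes all of $\kk_\ba[\bc,\bb]$. Recall from the isomorphism $\kk_\ba[\bc,\bb] \cong (S/(x_i^{b_i-c_i+1} \mid i \notin \supp^\ba(\bb)))(-\bc)$ that this module is, up to degree shift, a quotient of $S$ by a monomial ideal generated by pure powers of the variables \emph{not} in $\supp^\ba_X(\bb)$. In particular, for the free case, observe that when every $i \notin \supp^\ba(\bb)$ satisfies $b_i = c_i$ the generator $\bax^\bc$ already generates $\kk_\ba[\bc,\bb]$ as a $\kk[\supp^\ba_X(\bb)]$-module, and this submodule is $\kk[\supp^\ba_X(\bb)]$-free since the annihilator of $\bax^\bc$ is exactly $(x_i \mid i \notin \supp^\ba(\bb))$ — so $\{\bax^\bc \cdot \kk[\supp^\ba_X(\bb)]\}$ is a Stanley decomposition of Stanley depth $\#\supp^\ba(\bb)$.

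In general, however, $\kk_\ba[\bc,\bb]$ is \emph{not} generated by one element, so the lower bound needs a genuine decomposition. The cleanest route is to decompose over the ``short'' directions. Write $\kk_\ba[\bc,\bb] \cong \kk[\supp^\ba_X(\bb)] \otimes_\kk T$ where $T$ is the finite-dimensional $\kk$-algebra $\kk[x_i \mid i \notin \supp^\ba(\bb)]/(x_i^{b_i-c_i+1})$; choosing the monomial basis of $T$, indexed by $\bd \in \NN^n$ with $d_i = 0$ for $i \in \supp^\ba(\bb)$ and $0 \le d_i \le b_i - c_i$ otherwise, gives
$$
\kk_\ba[\bc,\bb] = \bigoplus_\bd \bax^{\bc+\bd}\,\kk[\supp^\ba_X(\bb)],
$$
and each summand is a Stanley space of dimension $\#\supp^\ba(\bb)$ (its generator has annihilator $(x_i \mid i \notin \supp^\ba(\bb))$). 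This is a bona fide Stanley decomposition, so $\sdepth(\kk_\ba[\bc,\bb]) \ge \#\supp^\ba(\bb)$. For the upper bound, we use the general fact $\sdepth M \le \dim_S M$ recalled in the introduction, together with the computation of $\dim_S$ given just after the definition of positively $\ba$-determined modules: since $(\kk_\ba[\bc,\bb])_{[\zero,\ba]} = \bigoplus_{\bd \in [\bc,\bb]} \kk\,\bax^\bd$ by \eqref{k[c,b] support}, the maximal $\#\supp^\ba(\bd)$ over $\bd$ in this interval is attained at $\bd = \bb$ and equals $\#\supp^\ba(\bb)$; hence $\dim_S(\kk_\ba[\bc,\bb]) = \#\supp^\ba(\bb)$ and therefore $\sdepth(\kk_\ba[\bc,\bb]) \le \#\supp^\ba(\bb)$.

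I expect the only point requiring care to be the verification that the summands in the displayed decomposition genuinely give a direct-sum decomposition as $\ZZ^n$-graded $\kk$-vector spaces and that each is a Stanley space in the precise sense of the definition (i.e., $\ann(\bax^{\bc+\bd}) \subseteq (x_i \mid x_i \notin \supp^\ba_X(\bb))$ inside $\kk_\ba[\bc,\bb]$, with the module actually $\kk[\supp^\ba_X(\bb)]$-free on this generator). This follows directly from the tensor-product description of $\kk_\ba[\bc,\bb]$ and the support formula \eqref{k[c,b] support}, so the argument is routine; there is no real obstacle, and the two bounds match.
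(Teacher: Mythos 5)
Your proposal is correct and follows essentially the same route as the paper: the upper bound via $\sdepth \le \dim_S$ with $\dim_S(\kk_\ba[\bc,\bb]) = \#\supp^\ba(\bb)$, and the lower bound via the explicit decomposition into Stanley spaces $\bax^{\bc'}\,\kk[\supp^\ba_X(\bb)]$ with $\bc'$ ranging over the same index set (your $\bc'=\bc+\bd$ is exactly the paper's indexing). The tensor-product packaging and the warm-up free case are cosmetic differences only.
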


\begin{proof}
Since $\kk_\ba[\bc,\bb] \cong S/(x_i^{b_i -c_i+ 1}\ |\ i \not \in  \supp^\ba(\bb))$ up to degree shifting, 
the assertion follows from \cite[Theorem~3]{A2}.  However, we will give a direct proof here 
for the reader's convenience. 

Since $\dim_S  (\kk_\ba[\bc,\bb]) =\# \supp^\ba(\bb)$,  
it suffices to show that $\sdepth (\kk_\ba[\bc,\bb]) \geq \# \supp^\ba(\bb)$. 
This inequality follows from the Stanley decomposition 
$$\kk_\ba[\bc, \bb] = \Dsum x^{\bc'}\poly{\supp_X^{\ba}(\bb)},$$
where the sums are taken over $\bc' \in \bra{\bc,\bb}$ such that
$c'_i = c_i$ if $i \in \supp^\ba (\bb)$ and $c_i \le c'_i \le b_i$ otherwise.
\end{proof}

\begin{dfn}
For a quasi Stanley decomposition $\cD = \bigoplus_{i=1}^s \kk_\ba[\bc_i, \bb_i]$ 
of $M \in \mod_\ba S$, we set 
$$\sdepth \cD = \min\{ \# \supp^\ba(\bb_i) \mid 1 \leq i \leq s  \}.$$
(If $\cD \in \qsd (M)$ comes from a Stanley decomposition, this definition clearly 
coincides with the previous one.)
\end{dfn}

\begin{rem}
In the above definition, $\sdepth \cD$ is the Stanley depth of $\cD$ 
{\it as a decomposition}. By Lemma~\ref{k[b,c] basic}, we have  $\sdepth |\cD|  \ge \sdepth \cD$.  
The authors do not know whether the equality always holds or not.
\end{rem}

\begin{prop}\label{sdepth by qsd}
For $M \in \mod_\ba S$, we have 
$$\sdepth M = \max \{ \, \sdepth \cD \mid \cD \in \qsd_\ba (M) \, \}.$$
\end{prop}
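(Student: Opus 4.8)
The plan is to prove the two inequalities $\sdepth M\le\max\{\sdepth\cD\mid\cD\in\qsd_\ba(M)\}$ and $\sdepth M\ge\max\{\sdepth\cD\mid\cD\in\qsd_\ba(M)\}$ separately. The first is formal: as observed after the definition of quasi Stanley decompositions, $\sd_\ba(M)$ embeds into $\qsd_\ba(M)$ in a way that preserves the Stanley depth of a decomposition, so $\max\{\sdepth\cD\mid\cD\in\qsd_\ba(M)\}\ge\max\{\sdepth\cD\mid\cD\in\sd_\ba(M)\}$, and the right-hand side equals $\sdepth M$ by Proposition~\ref{sda}.

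For the reverse inequality it suffices to produce, from an arbitrary $(\cD,f)\in\qsd_\ba(M)$ with $\cD=\bigoplus_{i=1}^s\kk_\ba[\bc_i,\bb_i]$, an honest Stanley decomposition $\cD'\in\sd(M)$ satisfying $\sdepth\cD'=\sdepth\cD$; then $\sdepth M\ge\sdepth\cD'=\sdepth\cD$. I would construct $\cD'$ by refining each summand. Let $f_i$ denote the restriction of $f$ to $\kk_\ba[\bc_i,\bb_i]$; as noted in the text (and as one checks directly from $f(\bax^\bd)=x^{\bd-\bc_i}f(\bax^{\bc_i})$ together with the relation $x_j\bax^\bd=\bax^{\bd+\be_j}$ in $\kk_\ba[\bc_i,\bb_i]$ for $j\in\supp^\ba(\bb_i)$), the map $f_i$ is a degree-preserving, injective $\kk[\supp^\ba_X(\bb_i)]$-homomorphism. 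On the other hand, the Stanley decomposition exhibited in the proof of Lemma~\ref{k[b,c] basic},
$$\kk_\ba[\bc_i,\bb_i]=\bigoplus_{\bc'} x^{\bc'}\,\kk[\supp^\ba_X(\bb_i)]$$
(with $\bc'$ ranging over those elements of $[\bc_i,\bb_i]$ that agree with $\bc_i$ on $\supp^\ba(\bb_i)$), expresses $\kk_\ba[\bc_i,\bb_i]$ as a direct sum of rank-one free $\kk[\supp^\ba_X(\bb_i)]$-modules.

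Applying the injective $\kk[\supp^\ba_X(\bb_i)]$-homomorphism $f_i$ to this decomposition yields
$$f_i\bigl(\kk_\ba[\bc_i,\bb_i]\bigr)=\bigoplus_{\bc'} m_{i,\bc'}\,\kk[\supp^\ba_X(\bb_i)],\qquad m_{i,\bc'}:=f(\bax^{\bc'}),$$
and each $m_{i,\bc'}\,\kk[\supp^\ba_X(\bb_i)]$ is a Stanley space: as the image under the injective $\kk[\supp^\ba_X(\bb_i)]$-linear map $f_i$ of the free rank-one module $x^{\bc'}\kk[\supp^\ba_X(\bb_i)]$, it is again $\kk[\supp^\ba_X(\bb_i)]$-free. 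Since $f$ is bijective and the source $\cD$ is, as a $\ZZ^n$-graded $\kk$-vector space, the direct sum over all $i$ and all admissible $\bc'$ of the subspaces $x^{\bc'}\kk[\supp^\ba_X(\bb_i)]$, applying $f$ shows that $\cD':=\bigoplus_{i,\bc'}m_{i,\bc'}\,\kk[\supp^\ba_X(\bb_i)]$ is a direct-sum decomposition of $M$ as a $\ZZ^n$-graded $\kk$-vector space; hence $\cD'\in\sd(M)$. Finally $\sdepth\cD'=\min_{i,\bc'}\#\supp^\ba(\bb_i)=\min_i\#\supp^\ba(\bb_i)=\sdepth\cD$, which completes the argument.

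The only step that merits attention is the assertion that $f_i$ is $\kk[\supp^\ba_X(\bb_i)]$-linear — equivalently, that $f$ is compatible with refining the source into the subspaces $x^{\bc'}\kk[\supp^\ba_X(\bb_i)]$ — but this is precisely the content of the remark following the definition of quasi Stanley decompositions, so I do not anticipate a real obstacle; the proof is essentially an assembly of Proposition~\ref{sda}, Lemma~\ref{k[b,c] basic}, and that remark.
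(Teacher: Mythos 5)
Your proof is correct and is essentially the paper's argument made more explicit: both prove $\le$ via $\sd_\ba(M)\subset\qsd_\ba(M)$ and Proposition~\ref{sda}, and both prove $\ge$ by refining each $\kk_\ba[\bc_i,\bb_i]$ with the Stanley decomposition from Lemma~\ref{k[b,c] basic} and pushing it forward through the $\kk[\supp^\ba_X(\bb_i)]$-linear, injective $f_i$ to get a Stanley decomposition $\cD'$ of $M$ with $\sdepth\cD'=\sdepth\cD$. The only difference is that you spell out the freeness of $m_{i,\bc'}\,\kk[\supp^\ba_X(\bb_i)]$ and the direct-sum check in detail where the paper leaves them implicit.
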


\begin{proof}
Since $\sd_\ba (M) \subset \qsd_\ba(M)$, we have  
$\sdepth M \leq \max \{ \, \sdepth \cD \mid \cD \in \qsd_\ba (M) \, \}$ by Proposition~\ref{sda}. 
To show the converse inequality, take a decomposition $(\cD, f) \in \qsd_\ba(M)$ with 
$\cD= \bigoplus_{i=1}^s \kk_\ba[\bc_i, \bb_i]$. As Lemma~\ref{k[b,c] basic}, take a Stanley decomposition 
$\cD_i$ of $\kk_\ba[\bc_i, \bb_i]$ for each $1 \le i \le s$. Since the restriction of $f :\cD \to M$ 
to $\kk_\ba[\bc_i, \bb_i]$ is a $\kk[ \supp^\ba_X(\bb_i)]$-homomorphism, $f(\cD_i)$ is a direct sum of 
Stanley spaces. On the other hand, $\bigoplus_{i=1}^s f(\cD_i) = \bigoplus_{i=1}^s f(\kk_\ba[\bc_i, \bb_i])=M$ 
as $\ZZ^n$-graded $\kk$-vector spaces.       
Hence $\cD':= \bigoplus_{i=1}^s f(\cD_i)$ is a Stanley decomposition of $M$, and we have 
$\sdepth M \ge \sdepth \cD' =\sdepth \cD$.   
\end{proof}

From a decomposition $(\cD, f) \in \qsd_\ba (M)$ with  
$\cD =\bigoplus_{i=1}^s \kk_\ba[\bc_i, \bb_i]$ 
of $M \in \mod_\ba S$, we will construct its Alexander dual 
$(\sA_\ba(\cD), g) \in \qsd_\ba (\sA_\ba(M))$ with 
$\sA_\ba (\cD) =\bigoplus_{i=1}^s \kk_\ba[\ba-\bb_i, \ba-\bc_i]$. 
Note that $|\sA_\ba (\cD)| \cong \sA_\ba (|\cD|)$ by Lemma~\ref{k[c,b] alex dual} 
and $(\sA_\ba(\cD))_{\ba-\bd}= \Hom_\kk(\cD_\bd, \kk) 
=: (\cD_\bd)^*$ for each $\bd \in [\zero, \ba]$. For this $\bd$, 
set $T(\bd):= \{ \, i  \mid \bc_i \preceq  \bd \preceq \bb_i \, \} \subset 
\{ 1, \ldots, n \}$. Then $\cD_\bd$ and $M_\bd$ have the basis 
$\{ \, \bax^\bd \in \kk[\bc_i, \bb_i] \mid i \in T(\bd)\, \}$
and $\{ \, f(\bax^\bd)  \mid i \in T(\bd), \, \bax^\bd \in \kk[\bc_i, \bb_i] \, \}$ respectively. 
Of course, the equations $\bb_i=\bb_j$ and $\bc_i=\bc_j$ might hold for distinct $i,j$. 
Even in this case, we distinguish $\bax^{\bd} \in \kk_\ba[\bc_i, \bb_i]$ from 
$\bax^{\bd} \in \kk_\ba[\bc_j, \bb_j]$. For the convenience, 
$\bax^\bd_i$ denotes $\bax^{\bd} \in \kk_\ba[\bc_i, \bb_i]$. 

Note that  $(\sA_\ba(M))_{\ba-\bd}$ has the dual basis 
$\{ \, f(\bax^\bd_i)^*  \mid i \in T(\bd) \, \}$. 
Now we can define a $\kk$-linear bijection 
$$g_{\ba-\bd} : \left( \bigoplus_{i=1}^s \kk_\ba[\ba-\bb_i, \ba-\bc_i]\right)_{\ba-\bd} 
\longrightarrow (\sA_\ba(M))_{\ba-\bd}$$ by 
$$(\kk_\ba[\ba-\bb_i, \ba-\bc_i])_{\ba-\bd} \ni \bax^{\ba-\bd} \longmapsto 
f(\bax^\bd_i)^*  \in (\sA_\ba(M))_{\ba-\bd}$$
for $i \in T(\bd)$ (note that $(\kk_\ba[\ba-\bb_i, \ba-\bc_i])_{\ba-\bd} \ne 0$ if and only if 
$(\kk_\ba[\bc_i, \bb_i])_{\bd} \ne 0$ if and only if $i \in T(\bd)$).  
It is easy to see that $g:= \bigoplus_{\bd \in [\zero, \ba]} g_\bd$ 
gives a $\kk$-linear bijection $(\sA_\ba(\cD))_{[\zero, \ba]} \to (\sA_\ba(M))_{[\zero, \ba]}$ 
satisfying $x^{\bd-\be}\cdot g(\bax^{\ba-\bd})=  g(\bax^{\ba-\be})$ for all 
 $\bd, \be \in [\bc_i, \bb_i]$ with $\bd \succeq \be$. Here $\bax^{\ba-\bd}, \bax^{\ba-\be} \in 
\kk_\ba[\ba-\bb_i, \ba-\bc_i]$. 
Since both $\sA_\ba(M)$ and $|\sA_\ba(\cD)|$ are positively $\ba$-determined modules, 
we can extend $g$ to a $\kk$-linear bijection $\sA_\ba(\cD) \to \sA_\ba(M)$ so that 
$\sA_\ba(\cD) \in \qsd_\ba(\sA_\ba(M))$. 
Now we have the following. 
 
\begin{prop}
The above construction 
%$\qsd_\ba(M) \ni \cD \mapsto \sA_\ba(\cD) \in \qsd_\ba(\sA_\ba(M))$ 
gives a one-to-one correspondence between $\qsd_\ba(M)$ and $\qsd_\ba(\sA_\ba(M))$.  
\end{prop}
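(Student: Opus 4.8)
The plan is to show that the assignment $(\cD,f)\mapsto(\sA_\ba(\cD),g)$ described above is a bijection by exhibiting an explicit inverse, and the cleanest way to do this is to observe that the construction is, up to identifications, an involution. First I would set up the notation carefully: given $(\cD,f)\in\qsd_\ba(M)$ with $\cD=\bigoplus_{i=1}^s\kk_\ba[\bc_i,\bb_i]$, the output has summands indexed by the same set $\{1,\dots,s\}$, with the $i^{\rm th}$ summand $\kk_\ba[\ba-\bb_i,\ba-\bc_i]$; note that applying the construction again to $\sA_\ba(\cD)$ returns summands $\kk_\ba[\ba-(\ba-\bc_i),\ba-(\ba-\bb_i)]=\kk_\ba[\bc_i,\bb_i]$, so at the level of the underlying decomposition data the construction squares to the identity. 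Thus it suffices to check that the maps $g$ behave compatibly, i.e. that if we run the construction on $(\sA_\ba(\cD),g)$ we recover $(\cD,f)$ (under the canonical identification $(\sA_\ba)^2=\opn{Id}$ of Miller).

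The key computation is the following double-dual bookkeeping. By construction, for $\bd\in[\zero,\ba]$ the space $(\sA_\ba(M))_{\ba-\bd}$ has basis $\{f(\bax^\bd_i)^*\mid i\in T(\bd)\}$, which is the dual basis to the basis $\{f(\bax^\bd_i)\mid i\in T(\bd)\}$ of $M_\bd$; and $g$ sends $\bax^{\ba-\bd}_i\in\kk_\ba[\ba-\bb_i,\ba-\bc_i]$ to $f(\bax^\bd_i)^*$. Now apply the construction to $(\sA_\ba(\cD),g)$: the relevant index set at degree $\ba-\bd$ for the decomposition $\bigoplus_i\kk_\ba[\ba-\bb_i,\ba-\bc_i]$ is $\{i\mid \ba-\bb_i\preceq\ba-\bd\preceq\ba-\bc_i\}=\{i\mid\bc_i\preceq\bd\preceq\bb_i\}=T(\bd)$, so the new decomposition's space in degree $\bd$ is $(\sA_\ba(\sA_\ba(M)))_\bd$ with basis $\{g(\bax^{\ba-\bd}_i)^*\mid i\in T(\bd)\}=\{(f(\bax^\bd_i)^*)^*\mid i\in T(\bd)\}$, which is canonically the basis $\{f(\bax^\bd_i)\mid i\in T(\bd)\}$ of $M_\bd$ under the canonical isomorphism $(V^*)^*\cong V$ for finite-dimensional $V$. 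So the new map in degree $\bd$ sends $\bax^\bd_i$ to $f(\bax^\bd_i)$, i.e. it is exactly $f$. I would then note that both $f$ and this recovered map are $S$-linear bijections of positively $\ba$-determined modules agreeing on $M_{[\zero,\ba]}$, hence equal, and likewise for the module structures.

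Concretely, the proof breaks into: (1) recording that the summand data $(\bc_i,\bb_i)_i\mapsto(\ba-\bb_i,\ba-\bc_i)_i$ is an involution on the index level, with $T(\bd)$ for $\cD$ at degree $\bd$ equal to $T(\ba-\bd)$ for $\sA_\ba(\cD)$; (2) checking that $\sA_\ba$ applied to $(\sA_\ba(\cD),g)$ reproduces $f$ via the canonical double-duality identification, which is the computation above; (3) invoking $(\sA_\ba)^2=\opn{Id}$ and the fact (stated in the excerpt, after the definition of isomorphism on $M_{[\zero,\ba]}$) that a positively $\ba$-determined module is determined by its truncation, so that the two maps, agreeing on truncations, coincide as elements of $\qsd_\ba(M)$. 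Points (1) and (3) are essentially formal once the notation is in place; the only real content is (2), and there the main obstacle is purely notational: one must keep the two copies $\bax^\bd_i$ (in $\kk_\ba[\bc_i,\bb_i]$ versus in $\kk_\ba[\bc_j,\bb_j]$ when $(\bc_i,\bb_i)=(\bc_j,\bb_j)$) carefully distinguished, and track that the dual bases are taken with respect to the correct splitting of $M_\bd$ into lines indexed by $T(\bd)$, so that the canonical pairing between $\sA_\ba(\cD)$ and $\cD$ in complementary degrees is unambiguous. I expect this bookkeeping, rather than any conceptual difficulty, to be the crux of writing the proof out in full.
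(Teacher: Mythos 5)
Your proof is correct and follows the natural route: the paper itself gives no proof of this proposition, treating the bijectivity as immediate from the construction's evident symmetry, and your involution argument (using $(\sA_\ba)^2 = \opn{Id}$, the equality $T(\bd)$ for $\cD$ with $T(\ba-\bd)$ for $\sA_\ba(\cD)$, and the canonical double-dual identification $(f(\bax^\bd_i)^*)^* = f(\bax^\bd_i)$) is exactly the bookkeeping one would supply. One small terminological slip: $f$ and the recovered map are not $S$-linear but only $\ZZ^n$-graded $\kk$-linear maps subject to the quasi Stanley compatibility $f(\bax^\bd) = x^{\bd-\bc_i}\cdot f(\bax^{\bc_i})$; that compatibility (not $S$-linearity) is what lets you conclude the two maps coincide once they agree on the elements $\bax^\bd_i$ with $\bd \in [\zero,\ba]$, which is what your step (3) actually uses.
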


\begin{rem}
If $M$ is  squarefree (i.e., $M \in \mod_\one S$), then $\qsd_\one (M)= \sd_\one (M)$ 
and the Alexander duality $\sA_\one$ gives a duality between $\sd_\one(M)$ and $\sd_\one(\sA_\one(M))$. 
This is the reason why the notion of quasi Stanley decompositions does not appear in \cite{SoJ}, 
while the Alexander duality of Stanley decompositions is studied there.  
\end{rem}

For $\ba,\ba',\bb,\bc \in \NN^n$ with $\bc \preceq \bb \preceq \ba \preceq \ba'$, 
we have $$\kk_{\ba}[\bc,\bb] = \kk_{\ba'}[\bc, \bb'],$$ 
where $\bb' \in \NN^n$ is the vector whose $i^{\rm th}$ coordinate is 
$$
b'_i=\begin{cases}
a'_i & \text{if $b_i = a_i$,} \\
b_i & \text{otherwise (equivalently, $b_i < a_i$).}
\end{cases}
$$
If $M \in \mod_\ba S$ and $\ba' \succeq \ba$, then $M \in \mod_{\ba'} S$ and 
$\qsd_\ba (M)$ can be seen as a subset of $\qsd_{\ba'} (M)$ in the natural way. 
Set 
$$\qsd(M):=\bigcup_{\ba \in \NN^n} \qsd_\ba (M).$$

As the Stanley depth is (conjectured to be) a combinatorial analog of 
the usual depth, the invariant $\shreg (M)$ defined below is a combinatorial analog of 
$\sreg (M)$. Note that $\sreg (\kk_\ba[\bc, \bb])=\# \supp(\bc)$. In fact, $\kk_\ba[\bc, \bb] 
\cong (S/(x^{b_i-c_i+1} \mid i \not \in \supp^\ba(\bb)))(-\bc)$, 
and the Koszul complex (with the degree shift) gives a minimal free resolution.

\begin{dfn}\label{shreg dfn}
For $\cD = \bigoplus_{i=1}^s \kk_\ba[\bc_i, \bb_i]$, set
$$\shreg (\cD) := \max \{ \, \#  \supp (\bc_i) \mid 1 \leq i \leq s \, \}.$$
For $M \in \mod_{\NN^n} S$, set 
$$\shreg (M):=  \min \{ \, \shreg (\cD) \mid \cD \in \qsd(M) \, \}.$$ 
\end{dfn}

\begin{lem}
If $M \in \mod_\ba S$, we have 
$$\shreg M =\min \{ \, \shreg (\cD) \mid \cD \in \qsd_\ba (M)\}.$$ 
\end{lem}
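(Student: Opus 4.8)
The inequality $\shreg M \le \min\{\shreg(\cD)\mid \cD\in\qsd_\ba(M)\}$ is immediate because $\qsd_\ba(M)\subset\qsd(M)$, so the content is the reverse inequality: any $\cD\in\qsd(M)$ can be replaced by some $\cD'\in\qsd_\ba(M)$ with $\shreg(\cD')\le\shreg(\cD)$. First I would fix $\cD=\bigoplus_{i=1}^s\kk_{\ba'}[\bc_i,\bb_i]\in\qsd_{\ba'}(M)$ for some $\ba'\succeq\ba$ (every element of $\qsd(M)$ has this form by definition of $\qsd(M)$ as the union), and recall from the discussion just above Definition~\ref{shreg dfn} that for $\bc\preceq\bb\preceq\ba'$ one has an honest equality $\kk_{\ba'}[\bc,\bb]=\kk_{\ba''}[\bc,\bb']$ whenever $\ba''\succeq\ba'$, so the only genuine issue is passing from a large $\ba'$ \emph{down} to $\ba$.

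The key observation is that since $M\in\mod_\ba S$, each summand $\kk_{\ba'}[\bc_i,\bb_i]$ that actually contributes to $M$ must have $\bc_i\preceq\ba$: indeed $f(\bax^{\bc_i})\in M_{\bc_i}$ is nonzero, and for a positively $\ba$-determined module $M$ we have $M_\bd\ne0\Rightarrow\bd\in[\zero,\ba]$ is \emph{not} quite true, but the generator can be ``pushed down'' — more precisely, I would mimic the construction in the proof of Proposition~\ref{sda}. For each $i$, replace the generator degree $\bc_i$ by $\bc_i':=\bc_i\wedge\ba$ and set $\bb_i':=\bb_i\wedge\ba$ (adjusting coordinates as in \eqref{abc} so that $(b_i')_j=a_j$ whenever $j\in\supp^{\ba'}(\bb_i)$), obtaining $\kk_\ba[\bc_i',\bb_i']$. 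Using that multiplication by $x_j$ is bijective on $M$ in all degrees $\bd$ with $j\in\supp^\ba(\bd)$, the composite bijection $f$ restricts/descends to a $\ZZ^n$-graded $\kk$-linear bijection on the truncations $(-)_{[\zero,\ba]}$, and since both $M$ and the new decomposition module are positively $\ba$-determined, this extends uniquely to show $\cD':=\bigoplus_i\kk_\ba[\bc_i',\bb_i']\in\qsd_\ba(M)$. Then $\#\supp(\bc_i')=\#\supp(\bc_i\wedge\ba)\le\#\supp(\bc_i)$, so $\shreg(\cD')\le\shreg(\cD)$, as needed.

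I would carry this out in the order: (1) reduce to a fixed $\cD\in\qsd_{\ba'}(M)$; (2) define the modified data $\bc_i',\bb_i'$; (3) check each $\kk_\ba[\bc_i',\bb_i']$ is positively $\ba$-determined and build the new map $f'$ by transporting $f$ through the identification of truncations $(\kk_\ba[\bc_i',\bb_i'])_{[\zero,\ba]}\cong(\kk_{\ba'}[\bc_i,\bb_i])_{[\zero,\ba]}$; (4) invoke the uniqueness principle ``$N,N'\in\mod_\ba S$ with $N_{[\zero,\ba]}\cong N'_{[\zero,\ba]}$ implies $N\cong N'$'' to see $\cD'$ is genuinely a quasi Stanley decomposition; (5) compare $\shreg$. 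The main obstacle is step (3)–(4): one must verify that after truncating to $[\zero,\ba]$ the piecewise maps $f_i$ still assemble into a \emph{bijection} onto $M_{[\zero,\ba]}$ — i.e. that no information is lost in the degrees with some coordinate $\ge\ba$ — which is exactly where positive $\ba$-determinacy of $M$ is used, via the bijectivity of the $x_j$-multiplications. Once that compatibility is nailed down the rest is the same bookkeeping as in Proposition~\ref{sda} and Proposition~\ref{sdepth by qsd}.
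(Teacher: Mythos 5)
Your overall strategy---pass from a decomposition in $\qsd_{\ba'}(M)$ with $\ba'\succeq\ba$ down to one in $\qsd_\ba(M)$ by shrinking the intervals and transporting the map through the truncation to $[\zero,\ba]$---is the right one and is essentially the paper's. The gap is in your recipe for the summands whose generator degree is not bounded by $\ba$. If $\bc_i\not\preceq\ba$, then every nonzero graded piece of $\kk_{\ba'}[\bc_i,\bb_i]$ sits in a degree $\bd\succeq\bc_i$, hence $\bd\not\preceq\ba$; such a summand contributes \emph{nothing} to $M_{[\zero,\ba]}$, and the correct move is simply to \emph{discard} it. The summands with $\bc_i\preceq\ba$, truncated via $\bb_i\mapsto\bb_i\wedge\ba$ but with $\bc_i$ left untouched, already biject onto $M_{[\zero,\ba]}$, and positive $\ba$-determinacy of $M$ and of the new decomposition module then yields $(\cD,f)\in\qsd_\ba(M)$; since one only deletes summands, $\shreg$ cannot increase. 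That is the paper's proof.

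Your proposal instead replaces $\bc_i$ by $\bc_i\wedge\ba$ for \emph{every} $i$. For a summand with $\bc_i\not\preceq\ba$ this ``pushing down'' creates new nonzero components inside $[\zero,\ba]$ that were not there before, so the assembled map cannot be a bijection onto $M_{[\zero,\ba]}$ for dimension reasons. Concretely, take $n=1$, $S=\kk[x]$, $M=S$ regarded as positively $(0)$-determined, and the decomposition $S=\kk\oplus xS$, which lies in $\qsd_{(1)}(S)$ with $\bc_1=(0)$ and $\bc_2=(1)\not\preceq(0)$; your recipe turns both summands into $\kk_{(0)}[(0),(0)]=S$, and $S\oplus S$ cannot map bijectively onto $S$. (There is also no canonical target for the relocated generator: $f(\bax^{\bc_i})$ lives in $M_{\bc_i}$, not $M_{\bc_i\wedge\ba}$, and pulling it back along the bijective multiplication maps only relocates the double counting.) Replace the push-down by outright deletion of the summands with $\bc_i\not\preceq\ba$ and your argument goes through; the remaining steps (identification of truncations, extension by $\ba$-determinacy, comparison of $\shreg$) are correct as you describe them.
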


\begin{proof}
Since $\qsd_\ba(M) \subset \qsd(M)$, we see that 
$\shreg M \leq \min \{ \, \shreg \cD \mid \cD \in \qsd_\ba (M)\}$. 
To prove the converse inequality,  
from $(\cD', f') \in \qsd_{\ba'}(M)$, 
we will construct $(\cD, f) \in \qsd_{\ba}(M)$ with 
$\shreg \cD \leq \shreg \cD'$.  
Replacing $\ba'$ by $\ba \vee \ba'$ if necessary, we may assume that $\ba' \succeq \ba$ 
(note that $\qsd_{\ba'} (M) \subset \qsd_{\ba \vee \ba'}(M)$). 
Set $\cD' = \Dsum_{i=1}^s  \kk_{\ba'}[\bc_i,\bb_i]$. 
We may assume that $\bc_i \preceq \ba$ for all $1 \leq i \leq t$ 
and  $\bc_i \not \preceq \ba$ for all $i > t$.  
Set $\cD:= \Dsum_{i=1}^t \kk_{\ba}[\bc_i, \bb_i \wedge \ba]$.  
Since $\cD_{[\zero, \ba]} \cong \cD'_{[\zero, \ba]}$ and  
$M \in \mod_\ba S$, we can define  $f: \cD \to M$ by 
$\kk_\ba [\bc_i, \bb_i \wedge \ba] \ni \bax^\bd \longmapsto x^{\bd-\bc_i} \cdot f'(\bax^{\bc_i}) \in M$ for all $\bd \in [\![ \bc_i, \bb_i \wedge \ba]\!]_\ba$.  
Then $(\cD, f)$ has the expected properties.  
\end{proof}

\begin{rem}\label{shreg remark}
(1) To compute $\shreg M$, the notion of quasi Stanley decompositions is really necessary.   
For example, set $S:= \kk[x,y]$, $\ba:= (1,2)$, and $M:=\kk_\ba[\zero,(0,1)] \cong S/(x, y^2)$. 
Then $M$ has a trivial quasi Stanley decomposition, and $\shreg M =0$. 
However $\cD=\kk \oplus y \, \kk$ is the unique Stanley decomposition of $M$, 
and $\shreg \cD =1$. 

(2) For a Stanley decomposition $\cD = \bigoplus_{i=1}^s m_i \, \kk[Z_i] \in \sd(M)$ with $\deg(m_i)=\bc_i$, 
Soleyman Jahan (\cite{SoJ}) set $\hreg (\cD) := \max \{ \, |\bc_i| \mid 1 \leq i \leq s \, \},$
where $|\bc_i|:= \sum_{j=1}^n (c_i)_j$ is the total degree of $\bc_i$. 
He also set $\hreg M:=  \min \{ \, \hreg \cD \mid \cD \in \sd(M) \, \}.$ 
Clearly, we have $\shreg M \leq \hreg M$ and the inequality is strict quite often. 
However, if $M$ is squarefree, then $\shreg M = \hreg M$. 
For squarefree modules, \cite[Conjecture~4.3]{SoJ} is equivalent to the condition (iii) 
of Theorem~\ref{skeleton CM} below. 
\end{rem}

\begin{thm}\label{shreg}
If $M \in \mod_{\ba} S$, then we have 
$$\shreg(M) + \sdepth(\sA_\ba(M))=n.$$
\end{thm}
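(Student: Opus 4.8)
The plan is to mirror the proof of Miller's equation $\sreg(M)+\depth(\sA_\ba(M))=n$ (Theorem~\ref{sreg}) at the level of quasi Stanley decompositions, using the bijection $\sA_\ba:\qsd_\ba(M)\to\qsd_\ba(\sA_\ba(M))$ established above. The key observation is that this bijection sends a summand $\kk_\ba[\bc_i,\bb_i]$ to $\kk_\ba[\ba-\bb_i,\ba-\bc_i]$, and it interchanges the two numerical data attached to a summand: the ``depth datum'' $\#\supp^\ba(\bb_i)$ and the ``regularity datum'' $\#\supp(\bc_i)$. Precisely, for a summand $\kk_\ba[\bc_i,\bb_i]$ of $\cD\in\qsd_\ba(M)$, the corresponding summand $\kk_\ba[\ba-\bb_i,\ba-\bc_i]$ of $\sA_\ba(\cD)$ has lower vertex $\ba-\bb_i$, and I claim $\#\supp(\ba-\bb_i)=n-\#\supp^\ba(\bb_i)$. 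Indeed, $i\notin\supp(\ba-\bb_i)$ iff $(\ba-\bb_i)_j=0$ iff $b_j=a_j$, while $j\in\supp^\ba(\bb_i)$ iff $b_j\ge a_j$ iff $b_j=a_j$ (using $\bb_i\preceq\ba$); so the two index sets are complementary. This is the crux of the bookkeeping.

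Granting that identity, the argument is: for any $\cD=\bigoplus_i\kk_\ba[\bc_i,\bb_i]\in\qsd_\ba(M)$,
\begin{align*}
\shreg(\cD)+\sdepth(\sA_\ba(\cD))
&=\max_i\#\supp(\bc_i)+\min_i\#\supp^\ba(\ba-\bc_i)\\
&=\max_i\#\supp(\bc_i)+\bigl(n-\max_i\#\supp(\bc_i)\bigr)=n,
\end{align*}
where the middle equality uses the complementarity identity applied to the summand $\kk_\ba[\ba-\bb_i,\ba-\bc_i]$ (whose upper vertex is $\ba-\bc_i$): $\#\supp^\ba(\ba-\bc_i)=n-\#\supp(\bc_i)$. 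Thus $\shreg$ and $\sdepth$ of Alexander-dual decompositions are tied by $\shreg(\cD)=n-\sdepth(\sA_\ba(\cD))$.

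Now I take extrema. By the lemma preceding this theorem, $\shreg(M)=\min\{\shreg(\cD)\mid\cD\in\qsd_\ba(M)\}$, and by Proposition~\ref{sdepth by qsd}, $\sdepth(\sA_\ba(M))=\max\{\sdepth(\cE)\mid\cE\in\qsd_\ba(\sA_\ba(M))\}$. Since $\cD\mapsto\sA_\ba(\cD)$ is a bijection between $\qsd_\ba(M)$ and $\qsd_\ba(\sA_\ba(M))$, minimizing $\shreg(\cD)=n-\sdepth(\sA_\ba(\cD))$ over $\qsd_\ba(M)$ is the same as $n-\max_{\cE}\sdepth(\cE)$ over $\qsd_\ba(\sA_\ba(M))$. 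Hence $\shreg(M)=n-\sdepth(\sA_\ba(M))$, which is the claim.

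The routine-but-necessary verifications are: (i) the complementarity identity $\#\supp(\ba-\bb)=n-\#\supp^\ba(\bb)$ for $\bb\preceq\ba$, which is immediate from the definitions once one is careful that $\supp^\ba(\bb)=\{j\mid b_j\ge a_j\}=\{j\mid b_j=a_j\}$ when $\bb\preceq\ba$; and (ii) confirming that in forming $\sA_\ba(\cD)$ one genuinely gets a summand $\kk_\ba[\ba-\bb_i,\ba-\bc_i]$ for each original summand $\kk_\ba[\bc_i,\bb_i]$ — but this is exactly Lemma~\ref{k[c,b] alex dual} together with the explicit construction of $\sA_\ba(\cD)$ given just before the correspondence proposition. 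I expect the only real subtlety is pedantic care with the two ``support'' conventions ($\supp$ vs.\ $\supp^\ba$) and with the fact that the lower and upper vertices swap roles under $\sA_\ba$; there is no deep obstacle, since all the structural work has already been done in establishing the $\qsd_\ba$ duality. One could alternatively phrase the whole proof as a one-line consequence of Theorem~\ref{sreg} if $\sreg(|\cD|)=\shreg(\cD)$ held decomposition-wise, but that is false in general (cf.\ Remark~\ref{shreg remark}), so the direct combinatorial route above is the right one.
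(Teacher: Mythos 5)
Your proposal is correct and follows essentially the same route as the paper: the complementarity $\#\supp^\ba(\ba-\bc_i)=n-\#\supp(\bc_i)$ gives $n-\shreg(\cD)=\sdepth(\sA_\ba(\cD))$, and taking extrema over the bijection $\qsd_\ba(M)\leftrightarrow\qsd_\ba(\sA_\ba(M))$ yields the claim. No substantive differences from the paper's argument.
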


\begin{proof}
For $\cD = \bigoplus_{i=1}^s \kk_\ba[\bc_i, \bb_i] \in \qsd_\ba(M)$, we have 
\begin{eqnarray*}
n - (\shreg \cD) &=& n - \max \{ \, \# \supp (\bc_i) \mid  1 \leq i \leq s \, \}\\
&=& \min \{ \, n- \# \supp (\bc_i) \mid  1 \leq i \leq s \, \}\\
&=& \min \{ \, \# \supp^\ba (\ba-\bc_i) \mid 1 \leq i \leq s  \}\\
&=& \sdepth(\sA_\ba(\cD)). 
\end{eqnarray*} 

Hence we have 
\begin{align*}
n - (\shreg M) &= n - \min \{ \, \shreg \cD \mid \cD \in \qsd_\ba(M) \, \} \\
               &= \max\{ \, n- (\shreg \cD) \mid \cD \in \qsd_\ba(M) \, \} \\
               &= \max \set{\sdepth(\sA_\ba(\cD))}{\cD \in \qsd_\ba(M)} \\
               &= \max\set{\sdepth (\cD')}{\cD' \in \qsd_\ba(\sA_\ba(M))} \\
               &= \sdepth (\sA_\ba(M)).              
\end{align*}
\end{proof}

\begin{cor}
For a short exact sequence 
$0 \longto L \longto M \longto N \longto 0$ in $\mod_{\NN^n} S$, we have 
$\shreg M \le \max \{\, \shreg L, \, \shreg N \, \}.$
\end{cor}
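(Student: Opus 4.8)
The plan is to pass to the Alexander dual and reduce the statement to the analogous inequality for the Stanley depth, namely Lemma~\ref{sec:ex seq and sdepth}, with Theorem~\ref{shreg} serving as the bridge. First I would fix a single $\ba \in \NN^n$ with $L, M, N \in \mod_\ba S$ all at once: each of the three is positively $\ba_i$-determined for some $\ba_i$ chosen large enough, and $\mod_{\ba'} S \supseteq \mod_{\ba''} S$ whenever $\ba' \succeq \ba''$, so it suffices to take $\ba := \ba_1 \vee \ba_2 \vee \ba_3$. The given short exact sequence then lives in $\mod_\ba S$.

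Next, since Miller's Alexander duality functor $\sA_\ba : \mod_\ba S \to (\mod_\ba S)^\op$ is exact, applying it to $0 \longto L \longto M \longto N \longto 0$ produces an exact sequence $0 \longto \sA_\ba(N) \longto \sA_\ba(M) \longto \sA_\ba(L) \longto 0$ in $\mod_\ba S$ (the roles of $L$ and $N$ swap because $\sA_\ba$ is contravariant). Lemma~\ref{sec:ex seq and sdepth} applied to this sequence gives $\sdepth \sA_\ba(M) \ge \min\{\sdepth \sA_\ba(L), \, \sdepth \sA_\ba(N)\}$.

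Finally, because all three modules lie in $\mod_\ba S$, Theorem~\ref{shreg} lets me translate $\shreg$ into $\sdepth$ of the dual: $\shreg M = n - \sdepth \sA_\ba(M)$, $\shreg L = n - \sdepth \sA_\ba(L)$, and $\shreg N = n - \sdepth \sA_\ba(N)$. Combining this with the inequality above and the identity $n - \min\{p,q\} = \max\{n-p, n-q\}$ yields $\shreg M = n - \sdepth \sA_\ba(M) \le n - \min\{\sdepth \sA_\ba(L), \, \sdepth \sA_\ba(N)\} = \max\{\shreg L, \, \shreg N\}$, as desired.

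I do not expect a serious obstacle here: the argument rides entirely on Theorem~\ref{shreg} and Lemma~\ref{sec:ex seq and sdepth}, both already available. The only point requiring a little care is the very first step — that one $\ba$ can be chosen for all three modules simultaneously, and that $\shreg$ computed inside $\mod_\ba S$ via $\qsd_\ba$ agrees with the global definition via $\qsd$ — which is exactly the content of the lemma immediately preceding Theorem~\ref{shreg} together with the fact that $\mod_\ba S$ grows with $\ba$.
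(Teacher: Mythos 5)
Your proposal is correct and is essentially the paper's own proof: the authors likewise dualize the sequence to $0 \to \sA(N) \to \sA(M) \to \sA(L) \to 0$ and combine Lemma~\ref{sec:ex seq and sdepth} with Theorem~\ref{shreg}. Your extra care about choosing a common $\ba$ is a valid (and implicitly assumed) preliminary step.
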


\begin{proof}
Since we have the exact sequence $0 \longto \sA(N) \longto \sA(M) \longto \sA(L) \longto 0$, 
the assertion follows from Lemma~\ref{sec:ex seq and sdepth} and Theorem~\ref{shreg}. 
\end{proof}

\section{Skeletons of positively $\ba$-determined modules}\label{sec:skel}
Let $M \in \mod_\ba S$. For $l \ge 0$, let $M^{>l}$ be the submodule of $M$ generated 
by the components $M_\bb$ for all $\bb \in \NN^n$ with $\# \supp^\ba(\bb) > l$. 
The module $M^{>l}$ is again positively $\ba$-determined. We set
$$
M^{\le l} := M/M^{>l},
$$
and call it the {\em $l^{\text{th}}$ skeleton of $M$}.
Clearly, $M^{\le l}$ is a positively $\ba$-determined module
with $\dim_S M^{\le l} \le l$, and $M^{\le l} = M$ for $l \ge \dim_S M$. 

\begin{rem}
(1) For a simplicial complex $\gD$ with the vertex set $\{1, \ldots, n\}$, the {\it Stanley-Reisner ring} 
$\kk[\Delta]$ of $\Delta$ is defined to be $S/(\prod_{i \in F} x_i \mid F \not \in \Delta)$. 
Then $\dim\kk[\Delta] = \max \{ \# F \mid F \in \Delta\}= \dim \Delta +1$. 
Moreover, $\kk[\Delta]$ is always a squarefree module, that is, $\kk[\Delta] \in \mod_\one S$. 
In this setting, we have $\poly{\gD}^{\le l} = \poly{\gD^{(l-1)}}$, where 
$\Delta^{(l-1)} := \{ F \in \Delta \mid \#F \leq l\}$ is the $(l-1)^{\rm st}$ skeleton of $\Delta$. 

(2) Let $I$ be a monomial ideal minimally generated by $x^{\ba_1},\ldots, x^{\ba_r}$. 
In the sequel, the skeleton of a module means the one with respect to $\ba = \ba_1 \vee \cdots \vee \ba_r$. 
Then $J:= I+S^{>l}$ coincide with the $l^{\text{th}}$ {\em skeleton ideal} of $I$ 
due to Herzog et al. (\cite{HSZ}). Note that $S/J \cong (S/I)^{\leq l}$. 
\end{rem}

\begin{lem}\label{sec:lem_of_skel}
Let $M \in \mod_\ba S$ and $l \ge 0$. If $M^{> l-1} \neq M^{> l}$, then 
$M^{> l -1}/M^{> l}$ is a Cohen-Macaulay module of dimension $l$. 
Moreover, $\sdepth(M^{>l-1}/M^{>l}) = l$.
\end{lem}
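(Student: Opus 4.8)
The plan is to realize $M^{>l-1}/M^{>l}$ explicitly in Miller's framework and then read off both its Cohen--Macaulayness and its Stanley depth from the structure of $\kk_\ba[\bc,\bb]$-type modules. First I would observe that $M^{>l-1}/M^{>l}$ is positively $\ba$-determined (being a quotient of the positively $\ba$-determined module $M^{>l-1}$ by the positively $\ba$-determined submodule $M^{>l}$), and that, in the $[\zero,\ba]$ range, its nonzero graded components are exactly the $M_\bb$ with $\#\supp^\ba(\bb)=l$; for such $\bb$ the multiplication $M_\bb \to M_{\bb+\be_i}$ with $i\in\supp^\ba(\bb)$ is an isomorphism onto a component of $M^{>l-1}$, while the multiplication maps corresponding to $i\notin\supp^\ba(\bb)$ become zero in the quotient (they land in $M^{>l}$). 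So $N:=M^{>l-1}/M^{>l}$ is annihilated by $x_i^{a_i+1}$ for every $i$ once we pass below $\ba$, and more precisely its component structure says: $N_{[\zero,\ba]}=\bigoplus N_\bb$ over $\bb$ with $\#\supp^\ba(\bb)=l$, with the $\kk[\supp^\ba_X(\bb)]$-module generated by each such component behaving freely.

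The key step is to produce a quasi Stanley decomposition of $N$ all of whose pieces $\kk_\ba[\bc_i,\bb_i]$ satisfy $\#\supp^\ba(\bb_i)=l$. Concretely, I would pick for each $\bb\in[\zero,\ba]$ with $\#\supp^\ba(\bb)=l$ and $b_i<a_i$ for all $i\notin\supp^\ba(\bb)$ (the ``bottom'' degrees in the $S^{>l}$-direction, i.e.\ degrees not divisible by any $x_i$, $i\notin\supp^\ba(\bb)$, inside $N$) a $\kk$-basis of $N_\bb$, and attach to each basis element a summand $x^{\bb}\,\kk[\supp^\ba_X(\bb)]=\kk_\ba[\bb,\bb\vee\text{(completion to }\ba\text{ on the support)}]$. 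The isomorphism statements above guarantee that the induced $\kk$-linear map onto $N$ is a bijection respecting the multiplication rule, so this is a genuine element of $\qsd_\ba(N)$, and every $\bb_i$ in it has $\#\supp^\ba(\bb_i)=l$. By Lemma~\ref{k[b,c] basic} this gives $\sdepth N\ge l$; since $\dim_S N\le\dim_S M^{>l-1}\le\dim_S M$ and in fact $\dim_S N=l$ (a degree $\bb$ with $\#\supp^\ba(\bb)=l$ and $N_\bb\ne0$ exists whenever $M^{>l-1}\ne M^{>l}$), we get $\sdepth N=l$.

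For Cohen--Macaulayness I would argue that $N\cong\bigoplus_j \kk[\supp^\ba_X(\bb_j)]$ up to degree shifts \emph{as an $S$-module}, not merely as a decomposition: because all the multiplication maps $N_\bb\to N_{\bb+\be_i}$ for $i\notin\supp^\ba(\bb)$ vanish, each piece $x^{\bb_j}\kk[\supp^\ba_X(\bb_j)]$ is in fact an $S$-submodule of $N$, and $N$ is their direct sum. Each such piece is a polynomial ring in $l$ variables (up to shift), hence Cohen--Macaulay of dimension $l$; a finite direct sum of Cohen--Macaulay modules all of the same dimension $l$ is Cohen--Macaulay of dimension $l$ (equivalently, $\depth N=\min_j\depth\kk[\supp^\ba_X(\bb_j)]=l=\dim_S N$). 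The main obstacle is the bookkeeping in the previous paragraph: one must be careful that the chosen ``bottom'' degrees, together with their $\kk[\supp^\ba_X(\bb)]$-orbits, tile $N_{[\zero,\ba]}$ exactly once, i.e.\ that distinct pieces are disjoint and their union is all of $N$ — this is where the precise definition of $M^{>l}$ (generated by components of large support) and the positive $\ba$-determinacy are used, and it is the only place real verification is needed; once the direct-sum-of-polynomial-rings picture is established, both conclusions are immediate.
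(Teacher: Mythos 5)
Your Stanley‐depth argument is essentially the paper's: you tile $N:=M^{>l-1}/M^{>l}$ by Stanley spaces $m\,\kk[\supp^\ba_X(\bb)]$ with $\#\supp^\ba(\bb)=l$, one for each basis element of each $N_\bb$ with $\bb\in[\zero,\ba]$ and $\#\supp^\ba(\bb)=l$, and that gives $\sdepth N\ge l$, hence $=l$.

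However, your Cohen--Macaulayness argument has a genuine gap. The claim that ``all the multiplication maps $N_\bb\to N_{\bb+\be_i}$ for $i\notin\supp^\ba(\bb)$ vanish'' is false. For a counterexample take $n=2$, $S=\kk[x_1,x_2]$, $\ba=(2,1)$, $M=S$, and $l=1$. Then $M^{>0}=(x_1^2,x_2)$, $M^{>1}=(x_1^2x_2)$, so $N=(x_1^2,x_2)/(x_1^2x_2)$. Here $\supp^\ba((0,1))=\{2\}$ but $x_1\cdot x_2=x_1x_2$ is nonzero in $N_{(1,1)}$, i.e.\ the multiplication $N_{(0,1)}\to N_{(1,1)}$ by $x_1\ (1\notin\supp^\ba((0,1)))$ is nonzero. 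The reason: for $i\notin\supp^\ba(\bb)$ the map $N_\bb\to N_{\bb+\be_i}$ is only forced to vanish when $b_i+1=a_i$ (so that $\supp^\ba(\bb+\be_i)$ has size $l+1$ and $N_{\bb+\be_i}=0$); when $b_i+1<a_i$ it can be nonzero. Consequently, the individual Stanley pieces $x^{\bb_j}\kk[\supp^\ba_X(\bb_j)]$ are \emph{not} $S$-submodules of $N$, and $N$ is \emph{not} isomorphic as an $S$-module to a direct sum of (shifted) polynomial rings in $l$ variables --- in the example, the $F=\{2\}$ part is $(S/(x_1^2))(-(0,1))$, which is not $\kk[x_2](-(0,1))\oplus\kk[x_2](-(1,1))$ as an $S$-module. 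The paper's fix is to group all degrees $\bb$ with the \emph{same} $F=\supp^\ba(\bb)$ into a single summand $\tilde M_{[F]}=\bigoplus_{\supp^\ba(\bb)=F}M_\bb$. This $\tilde M_{[F]}$ \emph{is} an $S$-submodule of $N$ (multiplication by $x_i$ either stays in the same $F$-stratum or lands in $N_{\bb+\be_i}=0$), and one views it as a module over the subring $S'=\kk[x_i\mid i\in F]\hookrightarrow S$: over $S'$ it is finite free of dimension $l$, hence $\depth_S\tilde M_{[F]}=\dim_S\tilde M_{[F]}=l$. Since $N=\bigoplus_F\tilde M_{[F]}$ as $S$-modules and all summands are Cohen--Macaulay of dimension $l$, so is $N$. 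Your Stanley decomposition is compatible with this grouping, so your $\sdepth$ conclusion stands; only the ``direct sum of polynomial rings as $S$-modules'' step needs to be replaced by ``free over the subring $\kk[x_i\mid i\in F]$ within each $F$-block.''
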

\begin{proof}
We set $\tilde M := M^{>l-1}/M^{>l}$. For $\bb \in \NN^n$,  $\tilde{M}_\bb \ne 0$ 
implies $\#\supp^\ba(\bb) = l$ and $\tilde{M}_\bb =M_\bb$. 
For $F \subseteq \bra n := \mbra{1,\dots,n}$ with $\# F =l$, set 
$$\tilde M_{[F]}:=  \Dsum_{\substack{\bb \in \NN^n \\ \supp^\ba(\bb) = F}} M_\bb.$$
Then $\tilde M_{[F]}$ is an $S$-submodule of $\tilde{M}$, and  we have  
$$
\tilde M = \Dsum_{\substack{F \subseteq \bra n \\ \# F = l}} \tilde M_{[F]},
$$
as $S$-modules. 
If we regard $\tilde M_{[F]}$ as an $S':=\poly{x_i\mid i \in F}$-module through 
the natural injection $S' \hookrightarrow S$, then  
$\tilde M_{[F]}$ is a finite free $S'$-module 
with 
$$
\tilde M_{[F]} \cong \Dsum_{\substack{\bb \in \bra{\zero, \ba} \\ \supp^\ba(\bb) = F}}
                    (S'(-\bb))^{\dim_\kk(M_\bb)}. 
$$
Therefore $\tilde{M}$ is a Cohen-Macaulay module of dimension $l$ over $S'$,  
hence the same is true over $S$. 
The above decomposition also shows that $\sdepth \tilde{M} = l$.  
\end{proof}

As in the case of the skeletons of monomial ideals, the following holds.

\begin{prop}[cf. {\cite[Corollary 1.5]{HSZ}}]\label{sec:depth_and_skel}
For $0 \neq M \in \mod_\ba S$,
$$
\depth M = \max\set{l}{0 \le l \le \dim_S M,\ M^{\le l} \text{ is Cohen-Macaulay}}.
$$
Moreover, we have $\dim_S M^{\leq \depth M} = \depth M$.  
\end{prop}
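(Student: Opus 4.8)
The plan is to prove the two inequalities $\depth M \le \max\{\,l \mid M^{\le l}\text{ is C-M}\,\}$ and $\depth M \ge \max\{\,l \mid M^{\le l}\text{ is C-M}\,\}$ separately, and to get the "moreover" part as a byproduct of the argument for the first inequality. The main tool is the filtration of $M$ by the submodules $M = M^{>-1}\supseteq M^{>0}\supseteq\cdots\supseteq M^{>\dim_S M}=0$, whose successive quotients are $M^{>l-1}/M^{>l}$; by Lemma~\ref{sec:lem_of_skel} each nonzero such quotient is Cohen--Macaulay of dimension exactly $l$. Note also the short exact sequences
\[
0\longto M^{>l}\longto M\longto M^{\le l}\longto 0,
\]
which let us pass between $M$, its skeletons, and the C-M pieces.

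\textbf{First, the inequality $\depth M \ge \max\{\,l \mid M^{\le l}\text{ is C-M}\,\}.$} Suppose $M^{\le l}$ is Cohen--Macaulay; I want $\depth M \ge l$. Since $M^{\le l}$ is a quotient of $M$ with $\dim_S M^{\le l}\le l$, if it is nonzero and C-M then $\depth M^{\le l}=\dim_S M^{\le l}\le l$, so a priori this only gives $\depth M^{\le l}\le l$, not the bound I want — so one must be slightly careful: what we actually need is that $M^{\le l}$ being C-M forces $\dim_S M^{\le l}=l$ (this holds whenever $M^{>l-1}\ne M^{>l}$, i.e.\ whenever some component $M_\bb$ with $\#\supp^\ba(\bb)=l$ survives in the skeleton, which is automatic when $l\le\dim_S M$ and the skeleton genuinely has dimension $l$) and then to descend the filtration. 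Concretely: using the s.e.s.\ $0\to M^{>l-1}/M^{>l}\to M^{\le l}\to M^{\le l-1}\to 0$ together with the depth lemma (the standard one for short exact sequences) and Lemma~\ref{sec:lem_of_skel} which says the left-hand term has depth $l$, an induction on $l$ shows that $M^{\le l}$ is C-M of dimension $l$ if and only if each quotient $M^{>j-1}/M^{>j}$ for $j\le l$ is "aligned" correctly; then the s.e.s.\ $0\to M^{>l}\to M\to M^{\le l}\to 0$ gives $\depth M\ge\min\{\depth M^{>l},\,\depth M^{\le l}\}\ge l$ once we know $\depth M^{>l}\ge l$, which itself follows because $M^{>l}$ is built from C-M pieces of dimension $>l$ via the analogous filtration $M^{>l}=M^{>l}\supseteq M^{>l+1}\supseteq\cdots$.

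\textbf{Second, the reverse inequality and the "moreover" claim.} Let $d:=\depth M$. I want to show $M^{\le d}$ is Cohen--Macaulay and $\dim_S M^{\le d}=d$. Here the filtration argument runs the other way: from $0\to M^{>d}\to M\to M^{\le d}\to 0$ and the fact that $\depth M^{>d}\ge d$ (each C-M layer of $M^{>d}$ has dimension $>d$, so by repeated use of the depth lemma along the filtration, $\depth M^{>d}\ge d+1>d$), the depth lemma yields $\depth M^{\le d}\ge\min\{\depth M^{>d},\,\depth M+1\}$ — wait, more carefully, from the long exact sequence in local cohomology or the standard depth estimate, $\depth M\ge\min\{\depth M^{>d},\depth M^{\le d}\}$ combined with $\depth M^{>d}\ge d+1$ forces $\depth M^{\le d}\le d$; and separately $\depth M^{\le d}\ge\min\{\depth M^{>d},\depth M\}$... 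I will instead use: since $\depth M^{>d}>\depth M$, the s.e.s.\ gives $\depth M^{\le d}=\depth M=d$. On the other hand $\dim_S M^{\le d}\le d$ always, and $\dim\ge\depth$ forces $\dim_S M^{\le d}=d$; hence $M^{\le d}$ is C-M of dimension $d$. This simultaneously proves $d\le\max\{l\mid M^{\le l}\text{ C-M}\}$ and the "moreover" statement $\dim_S M^{\le\depth M}=\depth M$.

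\textbf{The main obstacle} I expect is the bookkeeping around the degenerate cases — precisely when $M^{>l-1}=M^{>l}$ (so a "layer" is zero and the skeleton's dimension drops without $l$ changing), and the sharp form of the depth estimate for a short exact sequence $0\to A\to B\to C\to 0$ (namely $\depth B\ge\min\{\depth A,\depth C\}$, with $\depth C\ge\min\{\depth A-1,\depth B\}$, etc.). Getting the strict inequality $\depth M^{>d}>\depth M$ right — which is what makes the second half work — requires knowing that the lowest nonzero C-M layer appearing in $M^{>d}$ has dimension strictly bigger than $d$, and this is exactly where one invokes that $d=\depth M\le\dim_S M$ and that $M$ itself is \emph{not} already equal to $M^{>d}$ unless... so the delicate point is ensuring that the filtration of $M^{>d}$ has all its layers of dimension $\ge d+1$. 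This follows from the definition of $M^{>l}$ (generated by components $M_\bb$ with $\#\supp^\ba(\bb)>l$) together with Lemma~\ref{sec:lem_of_skel}, but it must be spelled out. Once that strict inequality is in hand, both directions are a clean induction down the finite filtration using the depth lemma, mirroring the proof of \cite[Corollary~1.5]{HSZ} for monomial ideals.
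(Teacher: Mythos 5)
Your toolkit matches the paper's --- the filtration $M \supseteq M^{>0} \supseteq M^{>1} \supseteq \cdots$, Lemma~\ref{sec:lem_of_skel} identifying each nonzero layer $M^{>l-1}/M^{>l}$ as Cohen--Macaulay of dimension $l$, and the standard depth estimates for a short exact sequence --- but you organize the argument as two separate inequalities, whereas the paper runs a single induction on $d = \dim_S M$ using $0 \to M^{>d-1} \to M \to M^{\le d-1} \to 0$. Your second half is in fact the more explicit of the two: from $\depth M^{>d} \ge d+1 > d = \depth M$ (the filtration of $M^{>d}$ has only layers of dimension $> d$, so repeated use of the depth lemma gives this) you deduce $\depth M^{\le d} = d$, and then $\dim_S M^{\le d} \le d$ together with $\dim_S \ge \depth$ gives $\dim_S M^{\le d} = d$ and Cohen--Macaulayness. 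This is a complete proof of the second assertion, for which the paper only remarks that it follows by a similar argument.

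Your first half, however, does not close, and the obstacle you flag is more than bookkeeping. You need: if $M^{\le l}$ is Cohen--Macaulay and $0 \le l \le \dim_S M$, then $\dim_S M^{\le l} = l$. Your justification is circular (it assumes the skeleton already has dimension $l$), and the claim is in fact false in general. With $S = \kk[x_1,x_2,x_3]$ and $\ba = \one$, take $M = \kk \oplus (S/(x_3))(-\be_1-\be_2) \in \mod_\one S$; then $\dim_S M = 2$ and $\depth M = 0$, but every $M_\bb$ with $\# \supp(\bb) = 1$ vanishes, so $M^{>0} = M^{>1}$ and $M^{\le 1} = M^{\le 0} \cong \kk$ is Cohen--Macaulay of dimension $0$ --- yet $1 \le \dim_S M$ and $\depth M = 0 < 1$. (The same subtlety is silently passed over in the paper's induction, in the step that widens the range from $0 \le l \le \dim_S M^{\le d-1}$ to $0 \le l \le d-1$; it is harmless when the numbers $\# \supp^\ba(\bb)$ for nonzero $M_\bb$ have no gaps, which is automatic for cyclic $M = S/I$ as in the cited source, but not for arbitrary $M \in \mod_\ba S$.) Either strengthen the condition in the maximum to ``$M^{\le l}$ Cohen--Macaulay of dimension $l$'', or note that the only use of this proposition downstream (Theorem~\ref{skeleton CM}) relies precisely on what your second half correctly proves.
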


\begin{proof}
We use induction on $d:= \dim_S M$. The case $d = 0$ is trivial. Assume $d > 0$.
The assertion clearly holds when $M$ is Cohen-Macaulay.
Hence it suffices to consider the case $\depth M < d$. 
Since $M^{>d}=0$, $M^{>d-1} (= M^{> d-1}/M^{>d})$ is a Cohen-Macaulay module of dimension $d$ 
by Lemma~\ref{sec:lem_of_skel}.  By the short exact sequence
$$
0 \longto M^{> d- 1} \longto M \longto M^{\le d- 1} \longto 0,
$$
we have $\depth M = \depth M^{\le d- 1}$. 
%(Note that $M^{\le d- 1} \ne 0$ now. In fact,  if $M^{\le d- 1} = 0$ then $M \, (= M^{>d-1} = 
%M^{>d-1}/M^{>d})$ must be Cohen-Macaulay, and it contradicts to the assumption.)
On the other hand, we have $M^{\le l} \cong (M^{\le d-1})^{\le l}$
for all $l \leq d-1$. Combining the above facts, we have 
\begin{align*}
\depth M &= \depth M^{\le d-1} \\
&= \max\set{l}{0 \le l \le d-1, \ (M^{\le d-1})^{\le l} \text{ is Cohen-Macaulay}} \\ 
&= \max\set{l}{0 \le l \le d-1, \ M^{\le l} \text{ is Cohen-Macaulay}}\\
&= \max\set{l}{0 \le l \le d, \ M^{\le l} \text{ is Cohen-Macaulay}}. 
\end{align*}
Here, the second equality follows from the induction hypothesis, 
and the fourth follows from the present assumption that $M^{\le d} (=M)$ is not Cohen-Macaulay. 

That $\dim_S M^{\leq \depth M} = \depth M$ also follows from similar argument.  
\end{proof}

We can also prove that $M^{\leq l}$ is Cohen-Macaulay (or the 0 module) 
for all $l \leq \depth M$, while we do not use this fact in this paper.

\begin{lem}\label{sec:lem_sdep}
For $\bb, \bc \in \bra{\zero,\ba}$ with $\bc \preceq \bb$,
we have $\sdepth( \kk_\ba[\bc,\bb]^{\le l}) = l$ if 
$\# \supp^\ba(\bc) \le l \le \# \supp^\ba(\bb)$. 
\end{lem}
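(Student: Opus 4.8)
The plan is to reduce the claim, by means of the explicit Stanley decomposition of $N := \kk_\ba[\bc,\bb]$ furnished in the proof of Lemma~\ref{k[b,c] basic}, to a Stanley-depth computation for the quotient of a smaller polynomial ring by a monomial ideal, and to settle the latter by an induction. Throughout, write $F := \supp^\ba(\bb)$, $G := \supp^\ba(\bc)$, $p := \# F$ and $q := \# G$; since $\bc \preceq \bb$ we have $G \subseteq F$, and by hypothesis $q \le l \le p$. As $\dim_S(N^{\le l}) \le l$ by construction and $\sdepth \le \dim_S$, it is enough to produce a Stanley decomposition of $N^{\le l}$ of Stanley depth at least $l$.

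Recall from the proof of Lemma~\ref{k[b,c] basic} that $N = \Dsum_{\bc'} x^{\bc'}\, \kk[x_i \mid i \in F]$, a partition of the monomial basis of $N$, the sum being over those $\bc' \in [\bc,\bb]$ with $\bc'_i = c_i$ for all $i \in F$. A short computation shows that, inside a block $x^{\bc'}\, \kk[x_i \mid i \in F]$, the image in $N^{\le l} = N/N^{>l}$ of a basis monomial $\bax^{\bc'+\gamma}$, with $\gamma \in \NN^n$ supported on $F$, is nonzero exactly when $\# \{ i \in F \setminus G \mid \gamma_i \ge a_i - c_i \} \le l - q$ --- a condition on $\gamma$ alone. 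Consequently each block contributes in $N^{\le l}$ a translate of $R/J$, where $R := \kk[x_i \mid i \in F]$ and $J \subseteq R$ is the ideal generated by the monomials $\prod_{i \in A} x_i^{a_i - c_i}$ with $A \subseteq F \setminus G$ and $\# A = l - q + 1$; and, because that condition does not involve $\bc'$, lifting any Stanley decomposition of $R/J$ (over $R$) through the translations $x^{\bc'} \cdot (-)$, block by block, produces a genuine Stanley decomposition of $N^{\le l}$ with the same Stanley depth. Hence $\sdepth(N^{\le l}) \ge \sdepth_R(R/J)$.

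The ideal $J$ involves only the variables $x_i$ with $i \in F \setminus G$, so $R/J \cong \kk[x_i \mid i \in G] \otimes_\kk (R'/J')$, where $R' := \kk[x_i \mid i \in F \setminus G]$ and $J'$ is $J$ regarded in $R'$; taking products of Stanley decompositions then gives $\sdepth_R(R/J) \ge q + \sdepth_{R'}(R'/J')$. Thus the whole statement reduces --- with $r := p - q$, $d := l - q$ and exponents $m_i := a_i - c_i \ge 1$ --- to the claim: if $R' = \kk[y_1, \dots, y_r]$ and $J' = (\prod_{i \in A} y_i^{m_i} \mid A \subseteq \{1, \dots, r\},\ \# A = d + 1)$ with integers $m_i \ge 1$ and $0 \le d \le r$, then $\sdepth(R'/J') \ge d$. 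I would prove this by induction on $r$. The cases $d = r$ (so $J' = 0$ and $R'/J' = R'$) and $d = 0$ (so $R'/J'$ is Artinian) are immediate. For $1 \le d < r$, apply Lemma~\ref{sec:ex seq and sdepth} to the exact sequence
$$
0 \longto y_1^{m_1}(R'/J') \longto R'/J' \longto (R'/J')\big/ y_1^{m_1}(R'/J') \longto 0 .
$$
One checks that $y_1^{m_1}(R'/J') \cong \kk[y_1] \otimes_\kk \big( \kk[y_2, \dots, y_r]/J'' \big)$ up to a degree shift, where $J''$ has the same shape but with parameters $(r - 1, d - 1)$; the crucial point is that $y_1^{m_1}\kk[y_1]$ is still a free $\kk[y_1]$-module, so the induction hypothesis gives $\sdepth(y_1^{m_1}(R'/J')) \ge 1 + (d - 1) = d$. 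Likewise $(R'/J')/y_1^{m_1}(R'/J') \cong (\kk[y_1]/(y_1^{m_1})) \otimes_\kk (\kk[y_2, \dots, y_r]/J''')$ with $J'''$ of shape $(r - 1, d)$, and its Stanley depth is $\ge d$ by induction; hence $\sdepth(R'/J') \ge \min \{ d, d \} = d$, as wanted.

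The step I expect to be the real obstacle is this last induction. The tempting shortcut --- filtering $N^{\le l}$ by the skeleton subquotients $N^{>k-1}/N^{>k}$, which are Cohen-Macaulay of Stanley depth $k$ by Lemma~\ref{sec:lem_of_skel} --- only delivers the bound $q$, so the extra $l - q$ must be squeezed out by peeling off the variables indexed by $F \setminus G$ one at a time while keeping each of them a free variable; this is precisely what the isomorphism $y_1^{m_1}(R'/J') \cong \kk[y_1] \otimes_\kk (\cdots)$ arranges. The other place that needs care, though the verification is routine, is the claim in the second paragraph that the block-by-block lift is genuinely a Stanley decomposition of $N^{\le l}$.
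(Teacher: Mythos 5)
Your proof is correct, but it takes a genuinely different route from the paper's. The paper runs a \emph{downward} induction on $l$ starting at $l=\#\supp^\ba(\bb)$: from a positively $\ba$-determined Stanley decomposition $\bigoplus_i x^{\bc_i}\kk[Z_i]$ of $\kk_\ba[\bc,\bb]^{\le l+1}$ with every $\#Z_i=l+1$, it observes that $\kk_\ba[\bc,\bb]^{\le l}=\bigoplus_i(x^{\bc_i}\kk[Z_i])^{\le l}$ as graded vector spaces, reduces to the single-block case $x^{\bc}\kk[Z]$, identifies $(x^{\bc}\kk[Z])^{\le l}$ with the quotient $(\kk[Z]/x^{\bb'}\kk[Z])(-\bc)$ by a \emph{single} monomial (a complete-intersection quotient), and then invokes Apel's Theorem~3 from~\cite{A2}. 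You instead start from the Stanley decomposition of $N=\kk_\ba[\bc,\bb]$ itself, compute the block images in $N^{\le l}$ as translates of $R/J$ where $J$ is generated by all $(d+1)$-fold products $\prod_{i\in A}x_i^{a_i-c_i}$, split off the free $\kk[G]$-factor by a tensor product, and then prove $\sdepth(R'/J')\ge d$ by an explicit induction on the number of variables via the exact sequence $0\to y_1^{m_1}(R'/J')\to R'/J'\to (R'/J')/y_1^{m_1}(R'/J')\to 0$, both terms dropping the parameter $r$ by one. Your version is longer but self-contained: it does not lean on Apel's complete-intersection result (the paper itself remarks parenthetically that a direct proof is possible, but proposes a different reduction, to $\bb'\preceq\one$ via the sliding operation of \S 5). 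The two verifications you flag as needing care --- that the block-by-block lift yields a genuine Stanley decomposition of $N^{\le l}$ over $S$, and the two tensor isomorphisms in the $r$-induction --- are indeed the only nonroutine points, and both check out: the blocks are graded-disjoint so the direct sum is automatic, a $\kk[Z]$-free space with $Z\subset\{x_i\mid i\in F\}$ inside a block image is already a Stanley space of $N^{\le l}$, and the submodule $y_1^{m_1}(R'/J')$ is precisely $y_1^{m_1}\kk[y_1]\otimes_\kk\bigl(\kk[y_2,\dots,y_r]/J''\bigr)$ because once $\gamma_1\ge m_1$ one of the $d+1$ slots is spent, lowering the threshold by one.
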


\begin{proof}
We use induction on $l$ starting from  $l= \# \supp^\ba (\bb)$. 
If $l= \# \supp^\ba (\bb)$, then $\kk_\ba[\bc,\bb]^{\le l} = \kk_\ba[\bc,\bb]$, and the assertion 
has been shown in Lemma~\ref{k[b,c] basic}.
% by Lemma~\ref{k[b,c] basic}. 
Consider the case $l < \# \supp^\ba (\bb)$. 
Since $\sdepth( \kk_\ba[\bc,\bb]^{\le l}) \leq \dim_S ( \kk_\ba[\bc,\bb]^{\le l}) =l$, 
it suffices to show  $\sdepth( \kk_\ba[\bc,\bb]^{\le l}) \geq l$. 
We have $\sdepth(\kk_\ba[\bc, \bb]^{\le l+1})=l+1$ by the induction hypothesis,
and there exists a decomposition $\cD:=\bigoplus_{i=1}^s x^{\bc_i} \, \kk[Z_i] \in 
\sd_\ba(\kk_\ba[\bc,\bb]^{\le l+1})$ with $\# Z_i = l+1$ for all $i$.  
Since $\cD$ is positively $\ba$-determined, we have $\supp^\ba(\bc_i) \subset Z_i$ for all $i$.  
Note that $\kk_\ba[\bc, \bb]^{\le l} = (\kk_\ba[\bc, \bb]^{\le l+1})^{\le l} 
= \bigoplus_{i=1}^s (x^{\bc_i} \, \kk[Z_i])^{\le l}$ as $\ZZ^n$-graded $\kk$-vector spaces. 
Hence, if $\cD_i$ is a Stanley decomposition of $(x^{\bc_i} \, \kk[Z_i])^{\le l}$, 
then $\bigoplus_{i=1}^s \cD_i$ is a Stanley decomposition of $\kk_\ba[\bc, \bb]^{\le l}$ 
by an argument similar to the proof of Proposition~\ref{sdepth by qsd}. 
Therefore the problem can be reduced to the case 
$\kk_\ba[\bc,\bb]= x^\bc \, \kk[Z]$ with $\# Z =l+1$ and $\supp^\ba(\bc) \subset Z$. 
If $\supp^\ba(\bc)= Z$, then $\kk_\ba[\bc,\bb]^{\le l}=0$ and there is nothing to prove. 
So we may assume that $\supp^\ba(\bc) \subsetneq Z$. 
Define $\bb' \in \ZZ^n$ as follows;
$$
b_i' := \begin{cases}
a_i - c_i  &\text{if $i \in Z $;} \\
0 &\text{otherwise.}
\end{cases}
$$
It is easy to verify that
$$
\left(\poly Z/x^{\bb'} \, \poly Z \right) (-\bc) \cong \left(x^{\bc} \, \poly Z\right)^{\le l}. 
$$
Since $\poly Z/x^{\bb'} \poly Z$ can be seen as the quotient ring of $S$ by the complete intersection ideal 
$I = (x^{\bb'})+ ( \, x_i \mid x_i \not \in Z \, )$, 
Stanley's conjecture  holds for $\poly Z/x^{\bb'} \poly Z$ ($\cong S/I$) by \cite[Theorem~3]{A2}.
(We can prove this statement directly using the results in the next section.
In fact, we can reduce to the case $\bb' \preceq \one$.) 
Thus we have 
$$\sdepth \left(x^{\bc}\poly Z\right)^{\le l} = 
\sdepth \left(\poly Z/x^{\bb'} \poly Z \right) =  l,$$
as desired.
\end{proof}

Now we have the following.

\begin{prop}\label{depth skeleton}
For $M \in \mod_{\ba} S$, $\sdepth M \ge t$ if and only if $\sdepth M^{\le t} \ge t$.
\end{prop}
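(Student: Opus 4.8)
The plan is to prove the two implications separately, using the skeleton filtration and the fact (Lemma~\ref{sec:lem_of_skel}) that each quotient $M^{>l-1}/M^{>l}$ is Cohen--Macaulay of dimension $l$ with Stanley depth exactly $l$. The easy direction is ``$\sdepth M^{\le t} \ge t \imply \sdepth M \ge t$''. Here one induces on $\dim_S M$; after noting $M^{\le l} \cong (M^{\le d-1})^{\le l}$ for $l \le d-1$ one considers the short exact sequence $0 \longto M^{>d-1} \longto M \longto M^{\le d-1} \longto 0$ where $d = \dim_S M$, observes $\sdepth M^{>d-1} = d \ge t$ by Lemma~\ref{sec:lem_of_skel}, and applies Lemma~\ref{sec:ex seq and sdepth} together with the induction hypothesis applied to $M^{\le d-1}$ (whose skeleton $\le t$ agrees with that of $M$).

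For the converse, ``$\sdepth M \ge t \imply \sdepth M^{\le t} \ge t$'', I would start from a Stanley decomposition $\cD = \bigoplus_{i=1}^s x^{\bc_i} \, \kk[Z_i] \in \sd_\ba(M)$ with $\# Z_i \ge t$ for every $i$ (such $\cD$ exists by Proposition~\ref{sda}). Since $M^{\le t} = M/M^{>t}$ and, as $\ZZ^n$-graded $\kk$-vector spaces, $M^{\le t} = \bigoplus_{i=1}^s (x^{\bc_i}\,\kk[Z_i])^{\le t}$ (by the same ``apply the skeleton operation summand-by-summand'' argument used in the proof of Proposition~\ref{sdepth by qsd}), it suffices to produce, for each $i$, a Stanley decomposition of $(x^{\bc_i}\,\kk[Z_i])^{\le t}$ all of whose Stanley spaces have rank $\ge t$. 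Now $x^{\bc_i}\,\kk[Z_i] = \kk_\ba[\bc_i, \bb_i]$ for the appropriate $\bb_i$ with $\#\supp^\ba(\bb_i) = \#Z_i \ge t$, and $\#\supp^\ba(\bc_i) \le \#Z_i$; but since the decomposition is positively $\ba$-determined we have $\supp^\ba(\bc_i) \subseteq Z_i = \supp^\ba(\bb_i)$, so $\#\supp^\ba(\bc_i) \le \#Z_i$ need not be $\le t$ --- however if $\#\supp^\ba(\bc_i) > t$ then $(x^{\bc_i}\,\kk[Z_i])^{\le t} = 0$ and there is nothing to prove, while if $\#\supp^\ba(\bc_i) \le t \le \#\supp^\ba(\bb_i)$ then Lemma~\ref{sec:lem_sdep} gives exactly $\sdepth\bigl(\kk_\ba[\bc_i,\bb_i]^{\le t}\bigr) = t$, hence a Stanley decomposition of $(x^{\bc_i}\,\kk[Z_i])^{\le t}$ with all ranks equal to $t$. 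Taking the direct sum over $i$ of these (together with the trivial contribution from the vanishing summands) yields a Stanley decomposition of $M^{\le t}$ with Stanley depth $\ge t$, which is what we want.

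The main obstacle is the bookkeeping in the converse direction: one must be careful that the skeleton operation $M \mapsto M^{\le t}$ really does distribute over the Stanley decomposition $\cD$ at the level of graded vector spaces, and that each $f(x^{\bc_i}\,\kk[Z_i])$ is genuinely a free $\kk[Z_i]$-module so that Lemma~\ref{sec:lem_sdep} applies verbatim to the abstract module $\kk_\ba[\bc_i,\bb_i]$ and transports back along $f$ --- this is the same subtlety handled in Proposition~\ref{sdepth by qsd}, and the argument there can be cited. Everything else reduces to the dimension bound $\sdepth M^{\le t} \le \dim_S M^{\le t} \le t$ (which makes the inequality $\sdepth M^{\le t} \ge t$ into an equality whenever $M^{\le t} \ne 0$) and to Lemmas~\ref{sec:lem_of_skel} and~\ref{sec:lem_sdep}.
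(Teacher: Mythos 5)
Your proposal is correct and takes essentially the same route as the paper: for ``$\sdepth M \ge t \imply \sdepth M^{\le t} \ge t$'' you distribute the skeleton operation over a positively $\ba$-determined Stanley decomposition of $M$ and invoke Lemma~\ref{sec:lem_sdep}, which is exactly the paper's argument; for the reverse implication you use the same short exact sequence $0 \to M^{>d-1} \to M \to M^{\le d-1} \to 0$ with Lemmas~\ref{sec:lem_of_skel} and~\ref{sec:ex seq and sdepth}, only phrasing the induction on $\dim_S M$ rather than on the skeleton index $i \in \{t, \dots, \dim_S M\}$ as the paper does, a purely cosmetic difference.
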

\begin{proof}
To see the ``only if" part, take  $\cD =\bigoplus_{i=1}^s m_i \, \kk[Z_i] \in \sd_\ba(M)$ with $\sdepth M = \sdepth \cD \ge t$, 
and $\cD_i \in \sd( (m_i \, \kk[Z_i])^{\le t})$ for each $1 \le i \le s$. 
Then the direct sum  $\bigoplus_{i=1}^s \cD_i$  gives a Stanley decomposition of $M^{\le t}$. 
Hence the assertion follows from Lemma \ref{sec:lem_sdep}. 
So it remains to  prove the ``if" part.  Assume that $\sdepth M^{\le t} \ge t$. We shall show that
$\sdepth M^{\le i} \ge t$ for all $i \ge t$ by induction on $i$. This implies the required assertion
since $M^{\le i} = M$ if $i \ge \dim_S M$.
If $i = t$, then there is nothing to do. Assume $i > t$.
Consider the exact sequence
$$
0 \longto M^{> i-1}/M^{>i} \longto M^{\le i} \longto M^{\le i-1} \longto 0.
$$
If $M^{>i-1}/M^{>i} = 0$, then $M^{\le i} = M^{\le i-1}$, and we are done.
Suppose not. By Lemma~\ref{sec:lem_of_skel}, we have $\sdepth \left(M^{>i-1}/M^{>i} \right) = i\ (\ge t)$. 
We also have $\sdepth(M^{\le i-1}) \geq t$ by the induction hypothesis. 
Therefore 
$$
\sdepth M^{\le i} \ge \min\mbra{\sdepth \left(M^{>i-1}/M^{>i} \right), \sdepth(M^{\le i-1})} \ge t.
$$
\end{proof}

\begin{thm}\label{skeleton CM}
The following are equivalent;
\begin{enumerate}
\item (Conjecture~\ref{Stanley conj}) $\sdepth M \ge \depth M$ for all $M \in \mod_{\ZZ^n} S$;
\item $\sdepth M \ge \depth M$ for all $M \in \mod_{\ZZ^n} S$ which are Cohen-Macaulay;
\item $\sreg (M) \ge \shreg (M)$ for all $M \in \mod_{\NN^n} S$;
\item $\sreg (M) \ge \shreg (M)$ for all $M \in \mod_{\NN^n} S$ with $\gs(M)= \sreg(M)$.
\end{enumerate}
\end{thm}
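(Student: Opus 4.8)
The strategy is to close a cycle of implications, using the skeleton machinery of \S\ref{sec:skel} to pass between the ``all modules'' statements and their ``Cohen--Macaulay'' (resp. ``$\gs=\sreg$'') restrictions, and Theorems~\ref{sreg} and \ref{shreg} together with Alexander duality to switch between the depth picture and the support-regularity picture. Concretely I would prove $(i)\Rightarrow(ii)$ and $(iii)\Rightarrow(iv)$ trivially (restriction to a subclass), then $(ii)\Rightarrow(i)$ via skeletons, then translate the whole equivalence $(i)\Leftrightarrow(ii)$ into $(iii)\Leftrightarrow(iv)$ by dualizing. The point is that we never have to prove Stanley's conjecture itself; we only reduce the general case to the Cohen--Macaulay case.

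\emph{The implication $(ii)\Rightarrow(i)$.} Let $M\in\mod_{\NN^n}S$ (we may pass from $\mod_{\ZZ^n}S$ to $\mod_{\NN^n}S$ by degree shift, and then fix $\ba$ with $M\in\mod_\ba S$), and set $t:=\depth M$. By Proposition~\ref{sec:depth_and_skel}, $M^{\le t}$ is Cohen--Macaulay with $\dim_S M^{\le t}=t$, so by $(ii)$ we get $\sdepth M^{\le t}\ge\depth M^{\le t}=t$. Now Proposition~\ref{depth skeleton} says exactly that $\sdepth M^{\le t}\ge t$ forces $\sdepth M\ge t=\depth M$. This is the heart of the argument; everything else is bookkeeping.

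\emph{Passing to the regularity side.} Fix $M\in\mod_\ba S$ and put $N:=\sA_\ba(M)\in\mod_\ba S$, so $\sA_\ba(N)=M$. Theorem~\ref{sreg} gives $\sreg(M)=n-\depth N$ and Theorem~\ref{shreg} gives $\shreg(M)=n-\sdepth N$; hence $\sreg(M)\ge\shreg(M)$ is equivalent to $\sdepth N\ge\depth N$. Since $\sA_\ba$ is a bijection on $\mod_\ba S$ and every $M\in\mod_{\NN^n}S$ lies in some $\mod_\ba S$, quantifying over all $M\in\mod_{\NN^n}S$ in $(iii)$ is the same as quantifying over all $N\in\mod_{\NN^n}S$ in $(i)$ restricted to $\NN^n$-graded modules; that restriction is harmless by the degree-shift remark. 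This yields $(i)\Leftrightarrow(iii)$. Finally, the extra hypothesis in $(iv)$ is the Alexander-dual translation of the extra hypothesis in $(ii)$: by the remark after Theorem~\ref{sreg}, $\sreg(M)=\gs(M)$ if and only if $\sA_\ba(M)$ is Cohen--Macaulay, so $(ii)\Leftrightarrow(iv)$ under the same dictionary; combined with $(i)\Leftrightarrow(ii)$ and $(i)\Leftrightarrow(iii)$ this closes the square.

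\emph{Expected main obstacle.} There is no deep obstacle left once Propositions~\ref{sec:depth_and_skel} and \ref{depth skeleton}, Lemma~\ref{sec:lem_of_skel}, and Theorems~\ref{sreg}, \ref{shreg} are in hand — the real work was in those statements. The one place demanding care is the matching of the side conditions: one must check that ``$M^{\le t}$ Cohen--Macaulay'' (which is what the skeleton argument naturally produces, with $t=\depth M$) is enough to feed $(ii)$, and dually that $\gs(N)=\sreg(N)$ is genuinely equivalent to $\sA(N)$ being Cohen--Macaulay rather than merely implied by it; both follow from Proposition~\ref{sec:depth_and_skel} and the remark after Theorem~\ref{sreg}, but they should be spelled out so the four conditions line up exactly under duality.
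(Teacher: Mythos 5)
Your proposal is correct and follows essentially the same route as the paper: $(ii)\Rightarrow(i)$ via Propositions~\ref{sec:depth_and_skel} and \ref{depth skeleton} applied to $M^{\le\depth M}$, and the translation of $(i)\Leftrightarrow(ii)$ into $(iii)\Leftrightarrow(iv)$ via Theorems~\ref{sreg} and \ref{shreg} together with the observation that $\sreg(M)=\gs(M)$ iff $\sA_\ba(M)$ is Cohen--Macaulay. The only (harmless) cosmetic difference is that you arrange the implications in a slightly different order, but the key ingredients and how they are combined match the paper's argument.
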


\begin{proof}
For (i) and (ii), we can replace $\mod_{\ZZ^n} S$ by $\mod_{\NN^n} S$. 
Hence the conditions (iii) and (iv) are the Alexander dual of (i) and (ii) respectively 
by Theorems \ref{sreg}, \ref{shreg} and the fact stated in the end of \S2. 

The implication (i) $\Rightarrow$ (ii) is clear.
For the converse implication, take $M \in \mod_{\NN^n} S$ with $t:=\depth M$.  Since $M \in \mod_\ba S$ 
for some $\ba \in \NN^n$, we can consider the skeleton $M^{\le t}$ of $M$. 
Since $M^{\le t}$ is Cohen-Macaulay and $\depth M^{\le t} = t$ 
as shown in Proposition \ref{sec:depth_and_skel}, the implication 
 (ii) $\Rightarrow$ (i) follows from Proposition~\ref{depth skeleton}. 
\end{proof}

\begin{rem}
(1) The equivalence (i) \iff (ii) is the module version of \cite[Corollary 2.2]{HSZ}.

(2) In the situation of  (ii), $\sdepth M \ge \depth M$ is equivalent to   
$\sdepth M = \depth M$ $(=\dim_S M)$. Similarly, in (ii), $\sreg (M) \ge \shreg (M)$ is equivalent to
$\shreg (M) = \sreg (M)$ ($=\gs(M)$). 

(3) We can replace $\mod_{\ZZ^n} S$ and $\mod_{\NN^n} S$ in the conditions of the 
theorem by $\mod_\ba S$ simultaneously. In particular, the above theorem 
holds in the context of squarefree modules. The equivalence (i) and (iii) has been mentioned in 
\cite{SoJ} for squarefree modules.  
\end{rem}

\section{Sliding operation for monomial ideals}
For $\ba, \bb \in \NN^n$, let $\ba \triangleleft \bb \in \NN^n$ be the vector whose $i^{\rm th}$ 
coordinate is 
$$(a \triangleleft b )_i = \begin{cases} a_i+b_i & \text{if $a_i \ne 0$,}\\
0 & \text{otherwise.}
\end{cases}$$   
Similarly, for $\ba, \bc \in \NN^n$ with $\ba \preceq \bc$, let $\bc \setminus \ba \in \NN^n$ 
denote the vector whose $i^{\rm th}$ coordinate is 
$$(c \setminus a )_i = \begin{cases} c_i+1-a_i & \text{if $a_i \ne 0$,}\\
0 & \text{otherwise.}
\end{cases}$$ 

Let $I \subset S$ be a monomial ideal minimally generated by $x^{\ba_1}, 
x^{\ba_2}, \ldots, x^{\ba_r}$, and $I= \bigcap_{i=1}^s \fm^{\bd_{i}}$ the irredundant irreducible 
decomposition. Here, for $\ba \in \NN^n$, $\fm^\ba$ denotes the irreducible ideal 
$( \, x_i^{a_i} \mid a_i > 0 \, )$.  For $\bb \in  \NN^n$, we set 
$$I^{\triangleleft \bb} := ( x^{\ba_1 \triangleleft \bb},  x^{\ba_2 \triangleleft \bb}, \ldots, 
 x^{\ba_r \triangleleft \bb}).$$
As we will see later, this operation preserves several invariants.   

Take $\bc \in \NN^n$ so that $\bc \succeq \ba_{i}$ for all $1 \leq i \leq r$. 
Then $I$ is positively $\bc$-determined, and we can take the Alexander dual $J:=\sA_\bc(S/I)$.  
By \cite[Theorems~5.24 and 5.27]{MS}, $J$ is (isomorphic to) a monomial ideal with  
$$J=(x^{\bc \setminus \bd_1}, x^{\bc \setminus \bd_{2}}, \ldots,x^{\bc \setminus \bd_{s}}) 
= \bigcap_{i=1}^r \fm^{\bc \setminus \ba_{i}}.$$ 
%We say $J$ is the Alexander dual of the ideal $I$ (with respect to $\bc$). 
Similarly, $\sA_\bc(I) \cong S/J$. Hence we have the following. 

\begin{prop}
We have $I^{\triangleleft \bb} \cong \sA_{\bb+\bc} \circ \sA_{\bc}(I)$ and 
$S/I^{\triangleleft \bb} \cong \sA_{\bb+\bc} \circ \sA_{\bc}(S/I)$. 
Hence the irredundant irreducible decomposition of $I^{\triangleleft \bb}$ is given by 
$$I^{\triangleleft \bb}  = \bigcap_{i=1}^s \fm^{\bd_i \triangleleft \bb}.$$ 
%In particular, $\dim (S/I) = \dim (S/I^{\triangleleft \bb})$.
\end{prop}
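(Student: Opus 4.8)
The plan is to apply the combinatorial description of Alexander duality for monomial ideals (\cite[Theorems~5.24 and~5.27]{MS}, recalled above) twice — first with center $\bc$, then with the larger center $\bb+\bc$ — and to reduce the whole statement to one coordinate-wise identity relating the operations $\triangleleft$ and $\setminus$. That identity is
\[
(\bb+\bc)\setminus(\bc\setminus\ba)=\ba\triangleleft\bb \qquad\text{for every }\ba\in\NN^n\text{ with }\ba\preceq\bc ,
\]
and I would verify it one coordinate at a time: if $a_i=0$ then $(\bc\setminus\ba)_i=0$ and both sides vanish in coordinate $i$; if $a_i\neq 0$ then $1\le a_i\le c_i$, so $(\bc\setminus\ba)_i=c_i+1-a_i$ lies between $1$ and $c_i$, and the $i$-th coordinate of the left-hand side is $(b_i+c_i)+1-(c_i+1-a_i)=a_i+b_i$. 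This is the only genuine computation in the argument.

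Next I would use the facts already recorded in the excerpt: with $J:=\sA_\bc(S/I)$, the ideal $J$ has minimal generators $x^{\bc\setminus\bd_1},\dots,x^{\bc\setminus\bd_s}$ and irredundant irreducible decomposition $\bigcap_{i=1}^r\fm^{\bc\setminus\ba_i}$, while $\sA_\bc(I)\cong S/J$. Because the exponents $\bc\setminus\bd_j$ of the minimal generators of $J$ all satisfy $\bc\setminus\bd_j\preceq\bc$, the ideal $J$ and the module $S/J$ are positively $\bc$-determined, hence positively $(\bb+\bc)$-determined, so $\sA_{\bb+\bc}$ may be applied to them.

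Then I would apply \cite[Theorems~5.24 and~5.27]{MS} a second time, with center $\bb+\bc$, to the ideal $J$: write $\sA_{\bb+\bc}(J)\cong S/K$ and $\sA_{\bb+\bc}(S/J)\cong K$ for the resulting monomial ideal $K$. The minimal generators of $K$ correspond to the irreducible components of $J$, hence are the $x^{(\bb+\bc)\setminus(\bc\setminus\ba_i)}$ for $i=1,\dots,r$, while the irredundant irreducible components of $K$ correspond to the minimal generators of $J$, hence are the $\fm^{(\bb+\bc)\setminus(\bc\setminus\bd_j)}$ for $j=1,\dots,s$. Invoking the identity above (legitimate since $\ba_i\preceq\bc$ and $\bd_j\preceq\bc$) rewrites these as $x^{\ba_i\triangleleft\bb}$ and $\fm^{\bd_j\triangleleft\bb}$ respectively, so $K=(x^{\ba_1\triangleleft\bb},\dots,x^{\ba_r\triangleleft\bb})=I^{\triangleleft\bb}$ with irredundant irreducible decomposition $\bigcap_{j=1}^s\fm^{\bd_j\triangleleft\bb}$.

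Assembling: $\sA_{\bb+\bc}\circ\sA_\bc(I)\cong\sA_{\bb+\bc}(S/J)\cong K=I^{\triangleleft\bb}$ and $\sA_{\bb+\bc}\circ\sA_\bc(S/I)\cong\sA_{\bb+\bc}(J)\cong S/K=S/I^{\triangleleft\bb}$, and the irredundant irreducible decomposition of $I^{\triangleleft\bb}=K$ is the one just obtained, namely $\bigcap_{j=1}^s\fm^{\bd_j\triangleleft\bb}$. I do not anticipate a real obstacle; the main thing to watch is the bookkeeping of centers — confirming that $J$ genuinely lies in $\mod_{\bb+\bc}S$ and that both the $\ba_i$ and the $\bd_j$ satisfy $\preceq\bc$, so that the slicing identity applies — and both points are immediate from $I$ being positively $\bc$-determined. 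The one conceptual step is simply to carry out the second dualization about the shifted center $\bb+\bc$ rather than about $\bc$.
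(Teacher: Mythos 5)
Your proof is correct and follows essentially the same route as the paper's: the paper's one-line argument is precisely to invoke the coordinate-wise identity $(\bb+\bb\mathbf{c})\setminus(\bb\mathbf{c}\setminus\ba)=\ba\triangleleft\bb$ together with the MS description of Alexander duality applied with centers $\bb\mathbf{c}$ and then $\bb+\bb\mathbf{c}$. You have simply written out the bookkeeping (verifying the identity, checking $\ba_i,\bb\mathbf{d}_j\preceq\bb\mathbf{c}$, tracking which of generators and irreducible components swap roles under each dualization) that the paper leaves to the reader.
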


\begin{proof}
Since $(\bb+\bc) \setminus (\bc \setminus \ba) = \ba \triangleleft \bb$, the assertions 
easily follow from the above mentioned properties of the Alexander duality. 
\end{proof}

Through the inclusion $\mod_\bc S \embto \mod_{\bb+\bc} S$, we can consider the functor 
$$(-)^{\triangleleft \bb}:= \sA_{\bb+\bc} \circ \sA_{\bc}$$ from  
$\mod_\bc S$ to $\mod_{\bb+\bc} S$. 
Note that $S(-\ba)^{\triangleleft \bb} =S(-(\ba \triangleleft \bb))$ for $\ba\in \NN^n$.   
If $$\bigoplus_{i=1}^t S(-\ba'_i) \stackrel{\phi}{\too} \bigoplus_{i=1}^s S(-\ba_i) \too M \too 0$$
is the minimal presentation of $M \in \mod_\bc S$, then 
$$\bigoplus_{i=1}^t S(-(\ba'_i \triangleleft \bb)) \stackrel{\phi^{\triangleleft \bb}}{\too}
\bigoplus_{i=1}^s S(-(\ba_i \triangleleft \bb)) \too M^{\triangleleft \bb} \too 0$$
is the minimal presentation of $M^{\triangleleft \bb}$. 
Here, if $c x^\ba$ ($c \in \kk$ and $\ba \in \NN^n$) is an entry of the matrix 
representing $\phi$, then  $c x^{\ba \triangleleft \bb}$ 
is the corresponding entry of the matrix representing $\phi^{\triangleleft \bb}$.  
Hence $M^{\triangleleft \bb}$ does not depend on the particular choice of 
$\bc \in \NN^n$ with $M \in \mod_\bc S$, and 
we can regard $(-)^{\triangleleft \bb}$ as a functor from $\mod_{\NN^n} S$ to itself. 

\begin{prop}\label{triangle b basic}
For $M \in \mod_{\NN^n} S$ and $\bb \in \NN^n$, the following hold. 
$$  \beta_{i,\ba}(M)= \beta_{i, \ba \triangleleft \bb}(M^{\triangleleft \bb}) \ \ \text{(for all $i \in\NN$ and  $\ba \in \NN^n$)}, 
\qquad  \dim_S M = \dim_S M^{\triangleleft \bb}, $$  
$$\depth (M) = \depth (M^{\triangleleft \bb}), \qquad 
\sreg (M) = \sreg (M^{\triangleleft \bb}),$$ 
$$\sdepth (M) = \sdepth (M^{\triangleleft \bb}), \qquad 
\shreg (M) = \shreg (M^{\triangleleft \bb}).$$ 
\end{prop}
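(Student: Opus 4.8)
The plan is to establish the Betti number equality first, since every other invariant in the list is either a classical invariant computed from a (minimal) free resolution or one of the ``support'' invariants $\sreg$, $\shreg$, which are likewise read off from the multigraded Betti numbers. The statement $\beta_{i,\ba}(M) = \beta_{i,\ba\triangleleft\bb}(M^{\triangleleft\bb})$ should follow by induction on homological degree from the discussion immediately preceding the proposition: the operation $(-)^{\triangleleft\bb}$ is exact (being a composite of Alexander duals composed with an inclusion of subcategories), it sends $S(-\ba)$ to $S(-(\ba\triangleleft\bb))$, and it transforms the matrix of a minimal presentation $\phi$ by replacing each entry $c\,x^\ba$ with $c\,x^{\ba\triangleleft\bb}$. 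The key point is that this substitution preserves minimality: an entry lies in $\fm$ iff $\ba\neq\zero$, and $\ba\triangleleft\bb=\zero$ iff $\ba=\zero$, so no unit entries are created or destroyed. Iterating, one builds the whole minimal free resolution of $M^{\triangleleft\bb}$ from that of $M$ with the bookkeeping $\ba\mapsto\ba\triangleleft\bb$ on twists, giving the Betti equality.

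From the Betti equality the remaining classical invariants come for free. First, $\depth(M)=\depth(M^{\triangleleft\bb})$ because $\depth = n - \max\{i \mid \beta_i(M)\neq 0\}$ (projective dimension via Auslander--Buchsbaum), and the substitution $\ba\mapsto\ba\triangleleft\bb$ does not change the homological degrees in which Betti numbers are nonzero. For $\dim_S M = \dim_S M^{\triangleleft\bb}$: one option is to use the formula $\dim_S M = \max\{\#\supp^\bc(\bb')\mid M_{\bb'}\neq 0\}$ from \S2, but it is cleaner to note that $\dim_S$ equals the dimension of the support of the last nonzero Tor (or simply cite that Alexander duality preserves $\dim_S$ via $\dim_S\sA_\bc(N) + \gs(N) = n$ twice, tracking how $\gs$ behaves). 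Next, $\sreg(M)=\sreg(M^{\triangleleft\bb})$: by definition $\sreg(M)=\max\{\#\supp(\ba)-i \mid \beta_{i,\ba}(M)\neq 0\}$, so we must check $\#\supp(\ba)=\#\supp(\ba\triangleleft\bb)$, which is exactly the definition of $\triangleleft$ (the $i^{\text{th}}$ coordinate of $\ba\triangleleft\bb$ is nonzero iff $a_i\neq 0$).

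For the two Stanley-type invariants I would route through Alexander duality rather than argue directly. Using Theorem~\ref{shreg} and the identification $(-)^{\triangleleft\bb}=\sA_{\bb+\bc}\circ\sA_\bc$, we get for $M\in\mod_\bc S$ that $\shreg(M^{\triangleleft\bb}) = n - \sdepth(\sA_{\bb+\bc}(M^{\triangleleft\bb})) = n - \sdepth(\sA_{\bb+\bc}\circ\sA_{\bb+\bc}\circ\sA_\bc(M)) = n - \sdepth(\sA_\bc(M)) = \shreg(M)$, where we used $(\sA_{\bb+\bc})^2=\opn{Id}$ and then Theorem~\ref{shreg} again for $M\in\mod_\bc S\subset\mod_{\bb+\bc}S$; one must check the ambient $\ba$'s match, but since $\shreg$ is independent of the chosen $\bc$ (the lemma before Definition~\ref{shreg dfn}) this is harmless. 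Dually, $\sdepth(M^{\triangleleft\bb}) = n - \shreg(\sA_{\bb+\bc}(M^{\triangleleft\bb})) = n - \shreg(\sA_\bc(M)) = \sdepth(M)$ by the same cancellation together with $\sreg$'s analog Theorem~\ref{sreg}---actually using Theorem~\ref{shreg} in the form $\sdepth(\sA_\ba(N))=n-\shreg(N)$. Alternatively one can cite that $\sreg$ is preserved (already proved) and dualize via Theorems~\ref{sreg} and \ref{shreg} to transfer $\depth$ and $\sdepth$ simultaneously.

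The main obstacle is the minimality claim in the Betti number step: one has to be sure that applying $(-)^{\triangleleft\bb}$ entrywise to a minimal free presentation (and then, inductively, to the whole minimal resolution) again yields a minimal complex, i.e.\ that no entry of any differential matrix migrates out of $\fm$ under $\ba\mapsto\ba\triangleleft\bb$. This is true because $\ba\triangleleft\bb=\zero\iff\ba=\zero$, but it requires knowing that the resolution of $M^{\triangleleft\bb}$ really is obtained by this entrywise operation on the resolution of $M$---which is precisely the content of the exactness of $(-)^{\triangleleft\bb}$ together with its action on free modules, established in the paragraph preceding the proposition. Once minimality is secured, everything else is bookkeeping with $\supp$ and the two duality theorems.
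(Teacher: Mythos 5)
Your proof is correct and follows the same route as the paper: establish the Betti equality via exactness of $(-)^{\triangleleft\bb}$ and its action $S(-\ba)\mapsto S(-(\ba\triangleleft\bb))$ on free modules, read off $\depth$ and $\sreg$ from the Betti numbers (using $\#\supp(\ba\triangleleft\bb)=\#\supp(\ba)$), and transfer $\dim_S$, $\sdepth$, $\shreg$ through Theorems~\ref{sreg} and \ref{shreg} via $\sA_{\bb+\bc}\circ\sA_{\bb+\bc}=\opn{Id}$. One small caveat: the matrix entry of $\phi^{\triangleleft\bb}$ corresponding to $cx^\ba$ has degree $(\bd\triangleleft\bb)-(\be\triangleleft\bb)$, where $\bd,\be$ are the twists of the source and target summands, and this need not equal $\ba\triangleleft\bb$; what saves minimality is that $\bd\mapsto\bd\triangleleft\bb$ is injective, so the new degree is $\zero$ iff $\ba=\zero$.
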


\begin{proof}
If  $P_\bullet$ is a minimal free resolution of $M$, then $(P_\bullet)^{\triangleleft \bb}$ 
is a minimal free resolution of $M^{\triangleleft \bb}$ by the exactness of the functor $(-)^{\triangleleft \bb}$.  
Since $P_i = \bigoplus_{\ba \in \NN^n} S(-\ba)^{\beta_{i, \ba}(M)}$, we have 
$(P_i)^{\triangleleft \bb} = \bigoplus_{\ba \in \NN^n} S(-(\ba \triangleleft \bb))^{\beta_{i, \ba}(M)}$. 
Hence $\beta_{i, \ba}(M)= \beta_{i, \ba \triangleleft \bb}(M^{\triangleleft \bb})$ holds, 
and this equation induces the third and fourth ones.  

For the remaining equations, take $\bc \in \NN^n$ with $M \in \mod_\bc S$. Then 
$$\dim_S M = n - \gs(\sA_\bc(M))= \dim_S(\sA_{\bb+\bc} \circ \sA_{\bc}(M)))
= \dim_S  M^{\triangleleft \bb}.$$
Similarly,  we have 
$$\sdepth (M) = n -\shreg(\sA_\bc(M))= \sdepth(\sA_{\bb+\bc} \circ \sA_{\bc}(M)) = \sdepth
 (M^{\triangleleft \bb}).$$ 
The equation $\shreg (M) = \shreg (M^{\triangleleft \bb})$ can be proved by the same way.
\end{proof}

The following is a direct consequence of Proposition~\ref{triangle b basic}. 

\begin{cor}\label{sliding preserves}
For $M \in \mod_{\NN^n} S$ and $\bb \in \NN^n$, we have the following. 

(1) $M$ is Cohen-Macaulay if and only if so is $M^{\triangleleft \bb}$. 
Similarly, for a monomial ideal $I$, $S/I$ is Gorenstein if and only if so is $S/I^{\triangleleft \bb}$. 

(2) Stanley's conjecture  holds for $M$ if and only if it holds for $M^{\triangleleft \bb}$.  
\end{cor}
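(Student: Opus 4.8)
The plan is to read off both statements from the list of invariants preserved by $(-)^{\triangleleft\bb}$ in Proposition~\ref{triangle b basic}; no new input is needed, which is why the corollary is labelled a direct consequence.

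I would dispose of (2) first, as it is immediate. Stanley's conjecture for $M$ is exactly the single inequality $\sdepth M \ge \depth M$. By Proposition~\ref{triangle b basic} we have $\sdepth M = \sdepth M^{\triangleleft\bb}$ and $\depth M = \depth M^{\triangleleft\bb}$, so that inequality for $M$ and the corresponding one for $M^{\triangleleft\bb}$ are literally the same assertion; hence one holds if and only if the other does.

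For the Cohen--Macaulay part of (1), I would use that $M$ is Cohen--Macaulay precisely when $\depth M = \dim_S M$ (with the usual convention for the zero module). Both sides are preserved by $(-)^{\triangleleft\bb}$ according to Proposition~\ref{triangle b basic}, so the equality passes between $M$ and $M^{\triangleleft\bb}$. For the Gorenstein part I would invoke the characterization that $S/I$ is Gorenstein if and only if it is Cohen--Macaulay of type $1$, i.e. $\sum_{\ba\in\NN^n}\beta_{p,\ba}(S/I)=1$ where $p=\opn{pd}_S(S/I)$. The point to verify is that $\ba\mapsto\ba\triangleleft\bb$ is injective on $\NN^n$: its $i$th coordinate vanishes exactly when $a_i=0$ and otherwise equals $a_i+b_i$, from which $a_i$ is recovered. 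Given this, the identities $\beta_{i,\ba}(M)=\beta_{i,\ba\triangleleft\bb}(M^{\triangleleft\bb})$ of Proposition~\ref{triangle b basic} give a bijection between the nonzero multigraded Betti numbers of $S/I$ and those of $S/I^{\triangleleft\bb}$; in particular $\opn{pd}_S$ is unchanged and so is the total Betti number in the top homological degree. Combining this with the Cohen--Macaulay equivalence already established yields the Gorenstein equivalence.

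I do not expect a real obstacle: the mathematical content is entirely in Proposition~\ref{triangle b basic}. The only spots needing a line of care are the injectivity of $\ba\mapsto\ba\triangleleft\bb$, used to transport the Cohen--Macaulay type, and the routine bookkeeping of the trivial (zero-module) cases in the definitions of Cohen--Macaulay and Gorenstein.
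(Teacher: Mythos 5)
Your proof is correct and matches the paper's (unwritten, labeled ``direct consequence'') argument: everything comes straight out of the list of invariants preserved by $(-)^{\triangleleft\bb}$ in Proposition~\ref{triangle b basic}. The extra care you take with the Gorenstein claim — preservation of projective dimension via Auslander--Buchsbaum and of the Cohen--Macaulay type via the multigraded Betti numbers, using that $\ba\mapsto\ba\triangleleft\bb$ is injective and that the minimal free resolution $(P_\bullet)^{\triangleleft\bb}$ has its shifts exactly at the translated degrees — is precisely the detail the paper implicitly leaves to the reader, and it is right.
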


Unfortunately (?), many classes of monomial ideals for which Stanley's conjecture  has been proved 
is closed under the operation $(-)^{\triangleleft \bb}$. 
For example, a monomial ideal $I$ is Borel fixed if and only if so is $I^{\triangleleft \bb}$. 
Hence Corollary~\ref{sliding preserves} does not so much widen the region where the conjecture holds. 
The following is an exception.  

Let $I$ be a monomial ideal minimally generated by monomials $m_1, \ldots, m_r$.  
We say $I$ has {\it linear quotient} if after suitable change  of the order of $m_i$'s  
the colon ideal $(m_1, \ldots, m_{i-1}):m_i$ is a monomial prime ideal for all $2 \leq i \leq r$.   
For example, $I:=(xy, yz^2) \subset \kk[x,y,z]$ has linear quotient, but 
$I^{\triangleleft (1,0,0)}=(x^2y, yz^2)$ does not. 
For further information on this notion, consult \cite{HVZ} and references cited there. 
Here we just remark that, for squarefree monomial ideals, 
having linear quotient is the Alexander dual notion of (non-pure) shellability,   
and  there are many examples. 

Since Stanley's conjecture holds for a monomial ideal with linear quotient by 
\cite[Proposition~4.5]{SoJ}, we have the following. 

\begin{prop}
If a monomial ideal $I$ has linear quotient then Stanley's conjecture holds for 
$I^{\triangleleft \bb}$ for all $\bb \in \NN^n$. 
\end{prop}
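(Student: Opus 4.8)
The plan is to obtain the conjecture for $I^{\triangleleft \bb}$ not by inspecting $I^{\triangleleft \bb}$ itself, but by transporting the conjecture from $I$ along the sliding functor. The point to keep in mind at the outset is that linear quotient is \emph{not} preserved by $(-)^{\triangleleft \bb}$: the excerpt's own example $I=(xy,yz^2)$, $I^{\triangleleft(1,0,0)}=(x^2y,yz^2)$ already exhibits an ideal to which \cite[Proposition~4.5]{SoJ} cannot be applied directly. Hence the argument must run through $I$.

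First I would note that since $I$ has linear quotient, \cite[Proposition~4.5]{SoJ} gives $\sdepth I \ge \depth I$, i.e. Stanley's conjecture holds for the $S$-module $I$. Next, choosing $\bc \in \NN^n$ with $\bc \succeq \ba_1 \vee \cdots \vee \ba_r$ (where $x^{\ba_1},\dots,x^{\ba_r}$ minimally generate $I$), the ideal $I$ is positively $\bc$-determined, so $I \in \mod_{\NN^n} S$ and the functor $(-)^{\triangleleft \bb} = \sA_{\bb+\bc}\circ\sA_\bc$ is defined on it; by the identification $I^{\triangleleft \bb} \cong \sA_{\bb+\bc}\circ\sA_\bc(I)$ recorded earlier in this section, applying $(-)^{\triangleleft \bb}$ to the module $I$ recovers precisely the monomial ideal $I^{\triangleleft \bb}$. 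Finally, apply Corollary~\ref{sliding preserves}(2) with $M=I$: Stanley's conjecture holds for $I$ if and only if it holds for $I^{\triangleleft \bb}$; since it holds for $I$ by the first step, it holds for $I^{\triangleleft \bb}$, as claimed.

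There is essentially no real obstacle here: the statement is a formal consequence of Corollary~\ref{sliding preserves}(2) — which itself rests on Proposition~\ref{triangle b basic}, i.e. on the invariance of $\sdepth$ and $\depth$ under $(-)^{\triangleleft \bb}$ — together with \cite[Proposition~4.5]{SoJ}. The only subtlety worth flagging in the write-up is the one mentioned above: one must resist trying to verify the linear-quotient property for $I^{\triangleleft \bb}$ itself, because it generally fails; the content of the proposition is exactly that the sliding operation produces new monomial ideals, no longer of linear-quotient type, for which Stanley's conjecture nevertheless continues to hold.
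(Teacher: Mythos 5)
Your argument is exactly the paper's: invoke \cite[Proposition~4.5]{SoJ} to get Stanley's conjecture for $I$ itself, then transport it to $I^{\triangleleft\bb}$ via Corollary~\ref{sliding preserves}(2). The extra remark that linear quotient is not preserved by the sliding operation is correct and is precisely why the statement has content, as the paper's own example $(xy,yz^2)^{\triangleleft(1,0,0)}=(x^2y,yz^2)$ already illustrates.
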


\begin{rem}
Let $I$ be a complete intersection monomial ideal of codimension $c$. Then each variable $x_i$ appears in 
at most one minimal monomial generator of $I$. Hence there is $\bb \in \NN^n$ 
such that  $(\sqrt{I})^{\triangleleft \bb} = I$ and we have $\sdepth \sqrt{I} = \sdepth I$ 
by Proposition~\ref{triangle b basic}. The latter equation has been proved by Cimpoea\c s \cite{C}. 
Now it is known that  $\sdepth I = n - \lfloor \frac{c}{2} \rfloor$ by Shen \cite{She}, 
but the equation $\sdepth \sqrt{I} = \sdepth I$ is used in his proof. 
\end{rem}

\section{Quotient ring by a cogeneric monomial ideal}
\begin{dfn}[Bayer et al. \cite{BPS}]\label{generic def}
Let $I$ be a monomial ideal minimally generated by monomials $m_1, \ldots, m_r$. 
We say $I$ is {\it generic} if any distinct $m_i$ and $m_j$ do not have the same 
non-zero exponent in any variable. 
\end{dfn}

\begin{dfn}[Sturmfels \cite{Str}]
Let $I$ be a monomial ideal with the irredundant irreducible decomposition 
$I=\bigcap_{i=1}^s \fm^{\ba_i}$. 
We say $I$ is {\it cogeneric} if any distinct $\fm^{\ba_i}$ and $\fm^{\ba_j}$ do not have 
the same minimal (monomial) generator. 
\end{dfn}

\begin{rem} 
(1) It is easy to see that a monomial ideal $I$ is generic 
if and only if the Alexander dual $J=\sA(S/I)$ is cogeneric.  Similarly, for $\bb \in \NN^n$, 
$I$ is generic (resp. cogeneric) if and only if so is $I^{\triangleleft \bb}$. 

(2) In \cite{MSY}, more inclusive definitions of generic and cogeneric monomial 
ideals are given,  and Apel \cite{A1, A2} uses these definitions.      
However, our proof of Theorem~\ref{cogeneric} below only works for the original definition, 
that is, Stanley's conjecture  is still open for the quotients by (non-Cohen-Macaulay) cogeneric 
monomial ideals in the sense of \cite{MSY}.  
\end{rem}

\begin{thm}[{Apel~\cite[Theorem~5]{A2}}]\label{Apel cogeneric} 
If $I$ is a Cohen-Macaulay cogeneric monomial ideal, 
then Stanley's conjecture holds for $S/I$ (i.e., $\sdepth(S/I) = \depth(S/I)$ holds, in this case). 
\end{thm}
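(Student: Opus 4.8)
The plan is to transport the problem to the Alexander dual, where ``cogeneric'' becomes ``generic'' and a Scarf-type decomposition is available. Fix $\bc\in\NN^n$ with $\bc\succeq\ba_i$ for every minimal generator $x^{\ba_i}$ of $I$, so that $I\in\mod_\bc S$, and put $J:=\sA_\bc(S/I)$; by the discussion recalled before Theorem~\ref{cogeneric}, $J$ is again a monomial ideal, $\sA_\bc(J)\cong S/I$, and $I$ is cogeneric precisely when $J$ is generic. Applying Theorems~\ref{sreg} and \ref{shreg} to $M=J$ gives $\depth(S/I)=n-\sreg(J)$ and $\sdepth(S/I)=n-\shreg(J)$, so Stanley's conjecture for $S/I$ is equivalent to $\shreg(J)\le\sreg(J)$. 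Since $\sA_\bc(J)\cong S/I$ is Cohen-Macaulay, the remark after Theorem~\ref{sreg} gives $\sreg(J)=\gs(J)$; moreover $S/I$ is then unmixed, say of codimension $c$, and since the minimal generators of $J$ are the monomials $x^{\bc\setminus\bd_i}$, one for each irreducible component $\fm^{\bd_i}$ of $I$, with $\supp(x^{\bc\setminus\bd_i})=\supp(\bd_i)$ of cardinality $c$, we get $\gs(J)=c$. As $\shreg(N)\ge\gs(N)$ for every $N\in\mod_{\NN^n}S$ (combine Theorem~\ref{shreg} with $\sdepth(\sA_\bc(N))\le\dim_S(\sA_\bc(N))$ and $\dim_S(\sA_\bc(N))+\gs(N)=n$), the whole theorem reduces to: \emph{the generic monomial ideal $J$ admits a quasi Stanley decomposition $\cD=\bigoplus_i\kk_\bc[\bc_i,\bb_i]$ with $\#\supp(\bc_i)\le c$ for all $i$}; equivalently, via the duality between quasi Stanley decompositions of $J$ and of $\sA_\bc(J)$ set up in \S3, that $S/I$ has a Stanley decomposition every piece of which has dimension $\ge n-c=\dim_S(S/I)$.

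To construct such a decomposition I would use the Scarf complex $\Delta_J$ of the generic ideal $J$ (Bayer-Peeva-Sturmfels, \cite{BPS}): with $m_1,\dots,m_r$ the minimal generators, a face of $\Delta_J$ is a subset $\tau\subseteq\{1,\dots,r\}$ whose least common multiple $m_\tau:=\opn{lcm}(m_i\mid i\in\tau)$ equals $m_{\tau'}$ for no other $\tau'$; $\Delta_J$ supports the minimal $\ZZ^n$-graded free resolution of $S/J$, and by genericity it encodes all of the combinatorics of $J$. Dualizing $\Delta_J$ yields the coScarf complex of the cogeneric ideal $I$ (Sturmfels, \cite{Str}), which resolves $S/I$ and organizes the standard-monomial staircase $\bigcup_{i=1}^s\{\bb\in\NN^n\mid b_j<(d_i)_j\text{ for all }j\in\supp(\bd_i)\}$ of $S/I$ along its faces; reading a Stanley space off each face --- base vector essentially the exponent of the corresponding $m_\tau$, free directions the complementary variables --- gives the sought decomposition. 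The Cohen-Macaulay hypothesis, recorded above as $\sreg(J)=\gs(J)=c$, says precisely that $\Delta_J$, hence the coScarf complex, has a rigid ``pure''-type structure (the minimal free resolution of $J$ is support-linear), and this rigidity is exactly what forces every Stanley space so obtained to have dimension $\ge n-c$, which closes the argument.

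The crux, and the main obstacle, is this last construction: turning the (co)Scarf data into an honest quasi Stanley decomposition and verifying it is simultaneously disjoint, exhaustive, and of the claimed dimension. A naive treatment of the staircase --- peeling the regions $\{\bb\mid b_j<(d_i)_j\}$ apart one component at a time --- does \emph{not} work: subtracting the region of a component $j$ from a Stanley space lying over a component $i$ leaves pieces of dimension $n-\#(\supp(\bd_i)\cup\supp(\bd_j))+1$, which drops below $n-c$ as soon as two components meet in fewer than $c-1$ variables, so the general-position structure of the components must be exploited globally, not pairwise. Carrying this out is the content of Apel's argument in \cite[Theorem~5]{A2}. (For context: the contribution of the present paper, in Theorem~\ref{cogeneric}, is a reduction of the general cogeneric case to this Cohen-Macaulay one via skeletons and the operation $(-)^{\triangleleft\bb}$, the delicate point being that skeletons of cogeneric ideals need not be cogeneric.)
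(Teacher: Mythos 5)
The theorem you are addressing is not proved in the paper; it is stated as a citation to Apel \cite{A2}, and the Alexander-dual formulation you derive is recorded as Proposition~\ref{Apel generic}, also without argument. Your translation is correct and matches the paper's: applying Theorems~\ref{sreg} and~\ref{shreg} to $J=\sA_\bc(S/I)$ gives $\depth(S/I)=n-\sreg(J)$ and $\sdepth(S/I)=n-\shreg(J)$; the Cohen--Macaulay hypothesis forces $\sreg(J)=\gs(J)$ by the remark after Theorem~\ref{sreg}; unmixedness in codimension $c$ gives $\gs(J)=c$ via the generators $x^{\bc\setminus\bd_i}$; and the universal bound $\shreg\ge\gs$ pins down the target. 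You are also right that the naive one-component-at-a-time peeling of the staircase fails. But your proposal stops exactly where the paper stops: the Scarf/coScarf construction of a quasi Stanley decomposition of $J$ with every $\#\supp(\bc_i)\le c$ is gestured at, not carried out, and you explicitly defer it to Apel. That deferred construction \emph{is} the theorem; the duality around it is bookkeeping. So what you have is the paper's citation made explicit, not an independent proof. One caution on the sketch: you say Cohen--Macaulayness makes the Scarf complex ``pure'' or ``rigid''; in this language what it actually yields is support-linearity, $\sreg(J)=\gs(J)$, and converting that numerical equality into the combinatorial disjointness and exhaustiveness of the Stanley spaces is precisely the hard part Apel supplies and you do not.
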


The next result says that the Cohen-Macaulay 
assumption can be removed from the above theorem.  

\begin{thm}\label{cogeneric}
If $I$ is a cogeneric monomial ideal, then $\sdepth(S/I) \geq \depth(S/I)$. 
That is, Stanley's conjecture  holds for the quotient by a cogeneric monomial ideal. 
\end{thm}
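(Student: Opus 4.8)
The plan is to reduce the statement to the Cohen-Macaulay case handled by Apel (Theorem~\ref{Apel cogeneric}) via the skeleton construction of \S\ref{sec:skel}. Writing $t := \depth(S/I)$, Proposition~\ref{sec:depth_and_skel} tells us that $(S/I)^{\le t}$ is Cohen-Macaulay of dimension $t$, and Proposition~\ref{depth skeleton} tells us that $\sdepth(S/I) \ge t$ as soon as $\sdepth\big((S/I)^{\le t}\big) \ge t$. So it suffices to prove Stanley's conjecture for the skeleton $(S/I)^{\le t}$. If $J := I + S^{>t}$ (the $t^{\text{th}}$ skeleton ideal, with respect to $\ba = \ba_1 \vee \cdots \vee \ba_r$ where $x^{\ba_i}$ are the minimal generators of $I$), then $(S/I)^{\le t} \cong S/J$, so the goal becomes: $S/J$ is Cohen-Macaulay, and we want $\sdepth(S/J) = \depth(S/J) = t$.

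The difficulty flagged by the authors is that $J$ need not be cogeneric, so Theorem~\ref{Apel cogeneric} does not apply to $J$ directly. The key step is therefore to analyze the irreducible decomposition of $J$ and either (a) show that $J$, while not cogeneric, can be slid via some $(-)^{\triangleleft \bb}$ into a cogeneric ideal — recall from Corollary~\ref{sliding preserves} that $\sdepth$, $\depth$, and the Cohen-Macaulay property are all preserved by $(-)^{\triangleleft \bb}$, and from Remark~6.8(1) that cogenericity is preserved both ways — or (b) directly build a Stanley decomposition of $S/J$ of depth $t$ by modifying the one Apel produces for the cogeneric ideal $I$ together with an understanding of how passing to $S^{>t}$ cuts things down. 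Concretely, I would first pass to the Alexander dual: let $\bc \succeq \ba_i$ for all $i$, set $K := \sA_\bc(S/I)$, which is generic by Remark~6.8(1); the skeleton operation on the $S/I$ side corresponds on the dual side to truncating the generators of the generic ideal $K$, i.e.\ to intersecting with powers of $\fm$ in some way. The plan is to show that this dual operation, suitably interpreted, lands inside the class of generic ideals after a further slide $(-)^{\triangleleft \bb}$, so that dualizing back puts $S/J$ in the cogeneric (hence Apel-covered) world.

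More precisely, the hoped-for structural fact is: for a cogeneric $I = \bigcap_{i=1}^s \fm^{\bd_i}$ and $t = \depth(S/I)$, the skeleton ideal $J = I + S^{>t}$ has an irreducible decomposition whose components $\fm^{\be_j}$ are obtained from the $\fm^{\bd_i}$ by lowering some of the exponents (and adjoining new $\fm^{\be}$ with $\#\supp(\be)$ small), in such a way that after applying a single $(-)^{\triangleleft \bb}$ with $\bb$ chosen coordinatewise large on the "collision" variables, the resulting ideal becomes cogeneric. Then Theorem~\ref{Apel cogeneric} gives $\sdepth(S/J^{\triangleleft\bb}) = \depth(S/J^{\triangleleft\bb})$, and Proposition~\ref{triangle b basic} transfers this back to $\sdepth(S/J) = \depth(S/J) = t$; finally Proposition~\ref{depth skeleton} upgrades this to $\sdepth(S/I) \ge t = \depth(S/I)$, which is the theorem.

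The main obstacle I expect is exactly the structural claim about the skeleton of a cogeneric ideal: controlling the irreducible decomposition of $I + S^{>t}$ in terms of that of $I$, and verifying that the only obstruction to cogenericity there is of a "repeated generator in a variable that can be separated by sliding" type. Equivalently, on the Alexander-dual side one must understand how truncating a generic monomial ideal at degree-support $t$ interacts with genericity; the naive truncation can create coincidences of non-zero exponents, and the whole point of introducing $(-)^{\triangleleft \bb}$ in \S5 is presumably to repair precisely these coincidences. I would spend most of the effort making the combinatorics of that repair precise — choosing $\bb$ so that $I^{\triangleleft\bb}$ (or $J^{\triangleleft\bb}$) is genuinely cogeneric while keeping all the invariants pinned down via Proposition~\ref{triangle b basic} — and expect the rest of the argument to be a routine chain of the cited reductions.
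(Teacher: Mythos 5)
Your overall meta-plan (dualize; reduce to the Cohen--Macaulay case via skeletons; invoke Apel) matches the authors' stated strategy, but the central structural claim you rely on in option~(a) is false, and this is exactly the obstruction the authors flag. The sliding operation $(-)^{\triangleleft\bb}$ cannot separate two irreducible components $\fm^{\be}$ and $\fm^{\be'}$ that share an equal \emph{nonzero} exponent in some coordinate $k$: since $(\be\triangleleft\bb)_k = e_k + b_k$ and $(\be'\triangleleft\bb)_k = e'_k + b_k$ whenever $e_k,e'_k\neq 0$, the coincidence $e_k=e'_k\neq 0$ persists for every choice of $\bb$. The only coincidences a slide can repair are of the form ``one exponent zero, the other nonzero,'' and those were never cogenericity violations to begin with. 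But the skeleton ideal $I+S^{>t}$ acquires many genuine coincidences of nonzero exponents among its irreducible components (the new components coming from $S^{>t}$ reuse the coordinates of $\ba=\ba_1\vee\cdots\vee\ba_r$ systematically), so no single slide will make it cogeneric. Option~(b), ``directly modify Apel's decomposition,'' is not carried out and is where all the actual work would live.

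The paper's proof avoids the skeleton ideal entirely. It works on the Alexander-dual (generic) side, reformulating the goal as $\shreg(I)\le\sreg(I)$ for $I$ generic, and argues by \emph{backward induction on} $\gs(I)$. In the inductive step, after a preliminary slide $I\mapsto I^{\triangleleft\br}$ (used only to make room, not to repair anything), the generators $m_1,\dots,m_t$ of minimal support size $s=\gs(I)$ are each replaced by the family $x_j^i\,m_i$ ($j\notin\supp(m_i)$), producing a new ideal $J\subset I$ that is again generic but has $\gs(J)=s+1$. Lemma~\ref{I/J} pins down $\sreg(I/J)=\shreg(I/J)=s$ via an explicit quasi Stanley decomposition, and the short exact sequence $0\to J\to I\to I/J\to 0$ together with Remark~\ref{suupp reg basic} then reduces the problem to $J$, for which the induction hypothesis applies; the base of the reduction (when $\sreg(I)=s$) is exactly Apel's Cohen--Macaulay case (Proposition~\ref{Apel generic}). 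So the paper increases $\gs$ step by step rather than dropping to a single Cohen--Macaulay skeleton, precisely because the latter exits the (co)generic class irreparably. If you want to salvage your approach, you would need to replace the ``slide to cogenericity'' step with an argument in the spirit of Lemma~\ref{I/J}: control $\sreg$ and $\shreg$ across the filtration directly, without appealing to Apel's theorem for the skeleton ideal itself.
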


Let $I$ be a monomial ideal and $J:=\sA(S/I)$ the Alexander dual. 
As stated in the end of \S2,
 $S/I$ is Cohen-Macaulay if and only if $\sreg(J)=\gs(J)$, 
where $\gs(J)= \min \{ \, \# \supp(\ba) \mid x^\ba \in J \, \}$.  

The next result is just the Alexander dual of Theorem~\ref{Apel cogeneric}.   

\begin{prop}\label{Apel generic}
Let $I$ be a generic monomial ideal with $\sreg(I)=\gs(I)$. 
Then we have $\shreg(I) = \sreg(I)$. 
\end{prop}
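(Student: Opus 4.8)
The plan is to obtain Proposition~\ref{Apel generic} as a formal consequence of Theorem~\ref{Apel cogeneric} via Alexander duality, rather than reproving anything about Stanley depth from scratch. Fix $\bc \in \NN^n$ with $I \in \mod_\bc S$, and set $J := \sA_\bc(S/I)$, so that by the discussion preceding the statement $J$ is (isomorphic to) a monomial ideal, and by Remark~6.3(1) $J$ is cogeneric because $I$ is generic. The first step is to translate the hypothesis $\sreg(I) = \gs(I)$ into a statement about $J$. We have the exact sequence $0 \to I \to S \to S/I \to 0$, hence $0 \to \sA_\bc(S/I) \to \sA_\bc(S) \to \sA_\bc(I) \to 0$, i.e. $0 \to J \to \sA_\bc(S) \to \sA_\bc(I) \to 0$; combined with $\sA_\bc(I) \cong S/J$ (stated in the excerpt) this identifies $\sA_\bc(I) \cong S/J$. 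Now apply Theorem~\ref{sreg}: $\sreg(I) + \depth(\sA_\bc(I)) = n$, so $\sreg(I) = n - \depth(S/J)$. On the other hand $\gs(I) = n - \dim_S(\sA_\bc(I)) = n - \dim_S(S/J)$ by the displayed formula $\dim_S(\sA_\bc(M)) + \gs(M) = n$. Therefore $\sreg(I) = \gs(I)$ is exactly the condition $\depth(S/J) = \dim_S(S/J)$, i.e. $S/J$ is Cohen-Macaulay.

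The second step is to invoke Apel's theorem. Since $J$ is a cogeneric monomial ideal and $S/J$ is Cohen-Macaulay, Theorem~\ref{Apel cogeneric} gives $\sdepth(S/J) = \depth(S/J)$, in particular $\sdepth(S/J) \ge \depth(S/J) = \dim_S(S/J)$.

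The third step is to dualize back. By Theorem~\ref{shreg} applied to $M = I \in \mod_\bc S$, we have $\shreg(I) + \sdepth(\sA_\bc(I)) = n$, i.e. $\shreg(I) = n - \sdepth(S/J)$. Using $\sdepth(S/J) = \dim_S(S/J) = n - \gs(I)$ from the computations above, we get $\shreg(I) = \gs(I)$. Since always $\sreg(I) \ge \gs(I)$ (noted after Theorem~\ref{sreg}) and by hypothesis $\sreg(I) = \gs(I)$, we conclude $\shreg(I) = \gs(I) = \sreg(I)$, which is the claim.

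I expect there to be essentially no serious obstacle: the whole argument is bookkeeping with the two duality identities $\sreg(M) + \depth(\sA_\ba(M)) = n$ (Theorem~\ref{sreg}) and $\shreg(M) + \sdepth(\sA_\ba(M)) = n$ (Theorem~\ref{shreg}), together with $\dim_S(\sA_\ba(M)) + \gs(M) = n$, plus the facts that $J = \sA_\bc(S/I)$ is a cogeneric monomial ideal and $\sA_\bc(I) \cong S/J$. The only points requiring a little care are: making sure a common $\bc$ works for $I$, $S/I$, and $J$ (it does, since all are positively $\bc$-determined once $\bc$ dominates the exponents of the generators of $I$), and checking that $\gs$ and $\dim_S$ behave as claimed under the duality — but these are all recorded in \S2 of the excerpt. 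In short, Proposition~\ref{Apel generic} is literally the Alexander dual restatement of Theorem~\ref{Apel cogeneric}, with "Cohen-Macaulay" dualizing to "$\sreg = \gs$" and "$\sdepth = \depth$" dualizing to "$\shreg = \sreg$".
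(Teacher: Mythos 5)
Your proof is correct and is exactly the paper's intended argument: the authors simply declare Proposition~\ref{Apel generic} to be ``the Alexander dual'' of Theorem~\ref{Apel cogeneric}, and your write-up supplies precisely the bookkeeping they leave implicit (the hypothesis $\sreg(I)=\gs(I)$ translating to $S/J$ Cohen--Macaulay via $\sreg(M)+\depth(\sA_\bc(M))=n$ and $\dim_S(\sA_\bc(M))+\gs(M)=n$, and the conclusion via $\shreg(M)+\sdepth(\sA_\bc(M))=n$). No gaps.
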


Via the Alexander duality, Theorem~\ref{cogeneric} is equivalent to the next. 
This is just a ``direct translation". 
However, it improves the ``human interface" of the argument, 
since we usually describe ideals by their generators, not irreducible decompositions.  
Anyway, to prove Theorem~\ref{cogeneric}, it suffices to show 
Theorem~\ref{generic} below.   
 
\begin{thm}\label{generic}
If $I$ is a generic monomial ideal,  then $\shreg(I) \leq \sreg(I)$. 
\end{thm}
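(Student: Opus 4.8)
The plan is to pass to the Alexander dual and then peel off the Cohen--Macaulay hypothesis of Apel's theorem on cogeneric ideals using the skeleton machinery of \S\ref{sec:skel}. Fix $\bc\in\NN^n$ with $\bc\succeq\ba_i$ for all $i$, so that $I\in\mod_\bc S$, and set $J:=\sA_\bc(S/I)=\bigcap_{i=1}^{r}\fm^{\bc\setminus\ba_i}$; then $J$ is cogeneric and $\sA_\bc(I)\cong S/J$. Applying Theorems~\ref{sreg} and~\ref{shreg} to $I$ and to $S/J$, the inequality $\shreg(I)\le\sreg(I)$ is equivalent to $\sdepth(S/J)\ge\depth(S/J)$, which is exactly Theorem~\ref{cogeneric}. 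Hence it suffices to prove Stanley's conjecture (Conjecture~\ref{Stanley conj}) for $S/J$ with $J$ an arbitrary cogeneric ideal.

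If $S/J$ is Cohen--Macaulay this is Apel's Theorem~\ref{Apel cogeneric}, so assume it is not, and put $t:=\depth(S/J)$. By Proposition~\ref{sec:depth_and_skel} the skeleton $N:=(S/J)^{\le t}$ is Cohen--Macaulay of dimension $t$, so $\depth N=t$, and by Proposition~\ref{depth skeleton} we have $\sdepth(S/J)\ge t$ if and only if $\sdepth N\ge t$. Therefore everything reduces to proving Stanley's conjecture for the single Cohen--Macaulay module $N=S/J'$, where $J'=J+S^{>t}$ is the skeleton ideal, the skeleton being taken, as in \S\ref{sec:skel}, with respect to the join of the exponents of a minimal monomial generating set of $J$.

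The heart of the argument is the analysis of $J'$. Were $J'$ cogeneric, Apel's Theorem~\ref{Apel cogeneric} would apply immediately; but, as flagged in the introduction, capping by $S^{>t}$ in general destroys cogenericity — two of the irreducible components $\fm^{\bd_j}$ of $J$ may acquire a common minimal generator once their exponents are truncated, and new irreducible components appear. So I would first control the irreducible decomposition of $J'$ explicitly (its components come from the components $\fm^{\bd_j}$ of $J$ and those of $S^{>t}$, the latter indexed by the $(n-t)$-subsets of $\{1,\dots,n\}$) and determine exactly which minimal generators genuinely coincide; then, using the tools of the previous sections — the Alexander duality of quasi Stanley decompositions, the behaviour of skeletons, and the sliding operation $(-)^{\triangleleft\bb}$ — reduce the Cohen--Macaulay module $N$ to a situation covered by Apel's Theorem~\ref{Apel cogeneric}, with Corollary~\ref{sliding preserves} and Proposition~\ref{triangle b basic} carrying the conclusion $\sdepth N=\depth N=t$ back to $N$ itself.

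The main obstacle I anticipate is precisely this reduction: pinning down how and where the coincidences among the minimal generators of the $\fm^{\bd_j}$'s are created by the cap $S^{>t}$, and showing that they are exactly the coincidences that can be repaired — in the slid, cogeneric world — without losing Cohen--Macaulayness. On the generic side the same difficulty reads as the assertion that the ``high-support'' submodule of $I$ (spanned by the monomials of $I$ whose support has size at least $\sreg(I)$), which is itself no longer generic, nevertheless admits a minimal free resolution and a quasi Stanley decomposition governed by a genuinely generic ideal. Once that combinatorial surgery is carried out and its compatibility with $(-)^{\triangleleft\bb}$ is checked, the remainder is routine assembly of Theorems~\ref{sreg} and~\ref{shreg}, Propositions~\ref{sec:depth_and_skel},~\ref{depth skeleton} and~\ref{triangle b basic}, Corollary~\ref{sliding preserves}, and Apel's cited theorem.
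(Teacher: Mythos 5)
Your reduction to Theorem~\ref{cogeneric} via the Alexander duality equalities of Theorems~\ref{sreg} and~\ref{shreg} is sound, and the attempt to then remove Apel's Cohen--Macaulay hypothesis by passing to the skeleton $(S/J)^{\le t}$ with Propositions~\ref{sec:depth_and_skel} and~\ref{depth skeleton} is a natural first idea. But precisely where the argument needs substance you stop: you correctly flag that the skeleton ideal $J'=J+S^{>t}$ is generally \emph{not} cogeneric (the truncation creates coincidences among the minimal generators of the irreducible components), and then announce that you ``would first control the irreducible decomposition of $J'$'' and carry out a ``combinatorial surgery'' compatible with $(-)^{\triangleleft\bb}$, without saying what that surgery is or why it succeeds. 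This is the crux of the theorem, not a routine step. In fact, the paper's introduction explicitly warns that skeletons of (co)generic ideals are no longer (co)generic, which is exactly why the authors do \emph{not} pursue the route you sketch.

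The paper instead works entirely on the generic side and does a backward induction on $\gs(I)$. After sliding $I$ to $I^{\triangleleft\br}$ so that all nonzero exponents exceed $r$, they build an auxiliary \emph{generic} ideal $J\subset I$ by replacing each minimal generator $m_i$ with $\#\supp(m_i)=s:=\gs(I)$ by the family $x_j^i m_i$ for $j\notin\supp(m_i)$ (and keeping the others). This $J$ is genuinely generic, has $\gs(J)=s+1$, and the quotient $I/J$ satisfies $\sreg(I/J)=\shreg(I/J)=s$ (their Lemma~\ref{I/J}). Then from $0\to J\to I\to I/J\to 0$, Remark~\ref{suupp reg basic}, and the inductive hypothesis applied to $J$, they conclude; the only place Apel's result (via its dual Proposition~\ref{Apel generic}) is invoked is the boundary case $\sreg(I)=s$. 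This construction is what replaces your missing ``surgery'': rather than trying to repair a non-cogeneric skeleton, it produces a generic ideal one step up in $\gs$ whose cokernel is cleanly controlled. As written, your proposal is not a proof because it does not resolve the very difficulty it identifies; you would either need to supply the explicit analysis of the irreducible decomposition of the capped ideal and show that the coincidences can be undone, or switch to an inductive construction of the paper's type.
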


\noindent{\it Proof.}
We prove the assertion by backward induction on $\gs(I)$. 
If $\gs(I)=n$, then $\shreg(I) = \sreg(I) =n$ and the 
assertion holds. Consider the case when $s:=\gs(I) <n$. 

Let $m_1, \ldots, m_r$ be the minimal monomial  generators of $I$. 
Replacing $I$ by $I^{\triangleleft \br}$ for 
$\br = (r,r, \ldots, r) \in \NN^n$, we may assume that 
we have $a_i > r$ for all $x^\ba \in I$ with $a_i \ne 0$. 
Assume that $\# \supp(m_i) = s$ for all $1 \leq i \leq t$ 
and $\# \supp(m_i) > s$ for all $i > t$. Consider the monomial ideals 
$$I_i = ( \, x_j^i \cdot m_i \mid j \not \in \supp(m_i) \,)$$ 
for each  $1 \leq i \leq t$, and set 
$$J := I_1 + I_2+\cdots + I_t+ (m_{t+1}, \cdots, m_r).$$   
Then $J$ is a generic monomial ideal with $J \subset I$ 
and $\gs(J) = s+1$. Moreover, we have the following lemma  
whose proof will be given later.

\begin{lem}\label{I/J} With the above notation, we have
$$\sreg(I/J) = \shreg(I/J)=s.$$
\end{lem}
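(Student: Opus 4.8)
The plan is to analyze the module $I/J$ very explicitly. Since $J \subset I$ and the only monomial generators of $I$ not in $J$ are $m_1, \ldots, m_t$ (those with $\#\supp(m_i) = s$), while each $m_i$ with $1 \le i \le t$ generates, modulo $J$, a cyclic submodule on which $x_j$ acts nilpotently for $j \notin \supp(m_i)$ (because $x_j^i \cdot m_i \in J$) and freely for $j \in \supp(m_i)$. First I would show that as a $\ZZ^n$-graded vector space $I/J = \bigoplus_{i=1}^t \overline{m_i} \cdot \kk_{\bc}[\,\cdot\,]$ for a suitable determining vector $\bc$, i.e. that the images $\overline{m_i}$ are linearly independent over the relevant subrings and together span $I/J$. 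Genericity is exactly what makes this work: if two generators $m_i, m_j$ with $i \ne j$ and both of support size $s$ had overlapping ``free directions'' in a way that created a relation, they would have to share a nonzero exponent in some variable, contradicting Definition~\ref{generic def}. (The passage to $I^{\triangleleft \br}$, ensuring every nonzero exponent exceeds $r$, guarantees the nilpotency orders $x_j^i$ are small enough not to interfere with the monomial structure of $I$ itself, so $\overline{m_i}$ really does behave like a copy of $\kk_{\bc}[\deg m_i, \bb_i]$ with $\supp^{\bc}(\bb_i)=\supp(m_i)$.)

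Granting that decomposition, the computation of both invariants is immediate. Each summand is isomorphic up to shift to $\kk_{\bc}[\bc_i,\bb_i]$ with $\#\supp^{\bc}(\bb_i) = \#\supp(m_i) = s$ and $\#\supp(\bc_i) = \#\supp(m_i) = s$ (here $\bc_i = \deg m_i$ and $\supp(\deg m_i) = \supp(m_i)$). Hence by Lemma~\ref{k[b,c] basic} every summand has Stanley depth $s$, so $\sdepth$ of the direct sum is $s$ by Lemma~\ref{sec:ex seq and sdepth}, and in fact the decomposition itself is a quasi Stanley decomposition with $\sdepth = s$, giving $\sdepth(I/J) = s$ by Proposition~\ref{sdepth by qsd}. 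Dually, $\shreg$ of that same decomposition is $\max_i \#\supp(\bc_i) = s$, so $\shreg(I/J) \le s$; and since $I/J \ne 0$ with all nonzero components $\bb$ having $\#\supp(\bb) \ge s$, every quasi Stanley decomposition has some $\bc_i$ with $\#\supp(\bc_i) \ge s$, forcing $\shreg(I/J) = s$. For $\sreg$: the free resolution of $\bigoplus_i \kk_{\bc}[\bc_i,\bb_i]$ is the direct sum of Koszul-type complexes, and as noted before Definition~\ref{shreg dfn} we have $\sreg(\kk_{\bc}[\bc_i,\bb_i]) = \#\supp(\bc_i) = s$; thus $\sreg(I/J) = s$ as well. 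Note $\dim_S(I/J) = s$ too, consistent with Cohen-Macaulayness of each summand.

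The main obstacle is the first step: proving cleanly that $I/J$ splits as that direct sum of $\kk_{\bc}[\bc_i, \bb_i]$'s, i.e. that there are no ``hidden'' overlaps among the cyclic pieces $\overline{m_i}\,\kk[\supp(m_i)]$ and that together they exhaust $I/J$. Exhaustion is the easy half (a monomial $x^\ba \in I$ lies in $J$ unless it is a multiple of some $m_i$ of support size exactly $s$, and then one reads off the support constraints from membership in $J$). The linear independence is where genericity must be invoked carefully: one argues that a monomial $x^\ba$ with $\overline{x^\ba} \ne 0$ in $I/J$ is divisible by a \emph{unique} $m_i$ with $i \le t$ — if it were divisible by two such, $m_i$ and $m_j$, then since $\#\supp(m_i) = \#\supp(m_j) = s$ and $\supp(m_i) \ne \supp(m_j)$ (else a shared generator forces equal exponents somewhere by genericity, contradiction), there is a variable $x_k \in \supp(m_i) \setminus \supp(m_j)$, and one derives $x^\ba \in I_j \subset J$ from $a_k \ge 1$ together with the exponent bounds, a contradiction. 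I would isolate this combinatorial divisibility fact as a sub-claim and then the $\kk$-linear and $S$-module structure of the decomposition follows formally, after which the remainder of the lemma is the routine bookkeeping sketched above.
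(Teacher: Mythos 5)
The central claim in your first step---that $I/J$ decomposes as a direct sum of $t$ pieces, one generated by each $\overline{m_i}$ with $\#\supp(m_i)=s$---is false, and the genericity argument offered to justify it is wrong. You assert that for $i\ne j$ with $\#\supp(m_i)=\#\supp(m_j)=s$, genericity forces $\supp(m_i)\ne\supp(m_j)$. But genericity (Definition~\ref{generic def}) only says the two generators cannot share the \emph{same} nonzero exponent in any variable; it does not forbid equal supports. For instance $m_1=x^2y^3$ and $m_2=x^3y^2$ are both minimal generators of the generic ideal $(x^2y^3,x^3y^2)\subset\kk[x,y,z]$, and $\supp(m_1)=\supp(m_2)=\{1,2\}$. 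Running this example through the construction (take $\br=(2,2,2)$, so the generators become $x^4y^5,\,x^5y^4$ and $J=(x^4y^5z,\ x^5y^4z^2)$), both $\overline{m_1}\,\kk[x,y]$ and $\overline{m_2}\,\kk[x,y]$ contain every $\overline{x^ay^b}$ with $a\ge5,\,b\ge5$, so the two pieces overlap; equivalently, $x^5y^5\in I\setminus J$ is divisible by both $m_1$ and $m_2$, refuting the ``unique divisor'' sub-claim. Since the proposed $\kk$-vector-space decomposition fails, the downstream computations of $\shreg$ and $\sreg$ also fail; your $\sreg$ argument in particular assumes the decomposition is an actual $S$-module direct sum so that the free resolution splits, which is even stronger than what a quasi Stanley decomposition gives.

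The paper's own decomposition is genuinely finer. After reducing to a single support $F$ via $M=\bigoplus_F M_{[F]}$, the quasi-Stanley pieces are indexed not by the generators but by the set $\Sigma=\{\bb\in\NN^n\mid x^\bb\in I,\ \bb\preceq\ba\}$ (with $\ba$ the join of the degrees of generators supported on $F$), and the ``free directions'' of the piece attached to $\bb$ are cut off at $l(\bb)-1$ where $l(\bb)=\min\{i\mid m_i\ \text{divides}\ x^\bb\}$; the indices $i$ appearing as exponents $x_j^i$ in $J$ exist precisely to make this well-defined. In the example above this gives \emph{three} pieces for $F=\{1,2\}$, corresponding to $\bb\in\{(4,5,0),(5,5,0),(5,4,0)\}$, not two. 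For $\sreg$ the paper does not split the module: it orders $\Sigma$ compatibly with $\succeq$ to build an $S$-module filtration of $M$ whose successive quotients are the $\kk_{\ba\vee\br}[\bb_i,\bb_i']$, then applies subadditivity of $\sreg$ along the filtration (Remark~\ref{suupp reg basic}).
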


\noindent{\it The continuation of the proof of Theorem~\ref{generic}.}
We have the short exact sequence 
$$0 \to J \to I \to I/J \to 0.$$
By Lemma~\ref{I/J}, Remark~\ref{suupp reg basic} and the fact that $\sreg(J) \geq s+1$, 
we have $\sreg(J) = \sreg(I)$ unless $\sreg(I) = s$. 
If $\sreg(I) =s$, then $\shreg(I)=s$ by Proposition~\ref{Apel generic}. 
Therefore we may assume that $\sreg(J) = \sreg(I)$. 
By the induction hypothesis, $\shreg(J) \leq \sreg(J)$. 
Hence we have 
\begin{align*}
\shreg(I) \leq \max \{ \shreg(J), \shreg(I/J) \} 
&= \shreg(J) \\
&\leq \sreg(J)=\sreg(I).
\end{align*}
\qed

\noindent{\it Proof of Lemma~\ref{I/J}.} Set $M:= I/J$, and consider $\shreg M$ first. 
It is clear that $\shreg M \geq s$, 
and it suffices to show that $\shreg M \leq s$. 

If $M_\ba \ne 0$, then $\# \supp^\br(\ba) =s$. 
For a subset $F \subset [n]:=\{ 1, \ldots, n \}$ with $\# F = s$, set 
$$M_{[F]}:= \bigoplus_{\substack{\ba \in \NN^n \\ \supp^\br(\ba) =F}} M_\ba.$$
Then it is an $S$-submodule of $M$, and we have 
\begin{equation}\label{direct sum2}
M= \bigoplus_{\substack{F \subset[n] \\ \# F = s}} M_{[F]}
\end{equation}
as $S$-modules. So it suffices to show that $\shreg(M_{[F]}) \leq s$ 
for each $F \subset [n]$ with $\#F =s$. We may assume that $M=M_{[F]}$ 
and $I=(m_1, \ldots, m_r)$ with $\supp(m_i)=F$ for all $i$  
(this reduction slightly restricts the structure of the module $M_{[F]}$, 
but it causes no problem in the following argument). 

Set $\ba := \deg(m_1) \vee \deg(m_2) \vee \cdots \vee \deg(m_r)$.  
By the assumption that $\supp(m_i)=F$ for all $i$, we have $\supp(\ba)=F$. 
Note that the $i^{\rm th}$ coordinate of $\ba \vee \br$ is $a_i$ if $i \in F$, and 
$r$ if $i \not \in F$. 
Hence $I$, $J$ and $M$ are positively $(\ba \vee \br)$-determined. 
We will give a decomposition $\cD \in \qsd_{\ba \vee \br} (M)$ with $\shreg \cD = s$. 
Set $\Sigma := \{ \, \bb \in \NN^n \mid x^\bb \in I, \, \bb \preceq \ba \, \},$
and take $\bb \in \Sigma$.  
Since $\supp(\bb)= \supp(\ba)=F$, we have $x^\bb \not \in J$. 
Moreover, for all monomial $x^\bc$ with $\supp(\bc) \subset \supp^\ba(\bb)$ and all 
$j \not \in F$, we have 
$$\min \{ \, i  \mid  \text{$m_i$ divides  $x^\bb$} \, \} 
= \min \{ \,  i \mid \, (x_j)^i \cdot  x^{\bb+\bc} \in J \, \}=:l(\bb)$$
by the construction of $J$.  Let $\bb' \in \NN^n$ be the vector whose $i^{\rm th}$ 
coordinate is 
$$
b'_i=\begin{cases}
b_i & \text{if $i \in F$,}\\
l(\bb)-1 & \text{if $i \not \in F$.}
\end{cases}
$$
Then 
\begin{equation}\label{qsd I/J}
\cD:=\bigoplus_{\bb \in \Sigma} \kk_{\ba \vee \br}[\bb, \bb']
\end{equation}
is a quasi Stanley decomposition of $M$ with $\shreg \cD = s$. 

To compute $\sreg (M)$, we can use the direct sum \eqref{direct sum2}, and 
may assume that $\supp(m_i)=F$ for all $i$ again.    
To prove $\sreg (M) = s$, we show that the quasi Stanley decomposition \eqref{qsd I/J} 
induces a filtration of $M$ as an $S$-module.  
Note that $\ba$ is the largest element of $\Sigma$ with respect to the order $\succeq$. 
Set $\bb_1:= \ba$, and take  a maximal element $\bb_2$ of 
$\Sigma \setminus \{ \bb_1 \}$.  
Inductively, let $\bb_i$ be a  maximal element of $\Sigma \setminus \{ \bb_1, \ldots, \bb_{i-1} \}$. 
This procedure stops in finite steps, since $m:=\# \Sigma < \infty$.  For $i \geq 1$, 
let $M_i$ denote the quotient module of $M$ by the submodule generated by the images of the monomials 
$x^{\bb_1}, \ldots, x^{\bb_i}$ (set $M_0:=M$), and let $N_i$ be the submodule of $M_{i-1}$ generated by the image of the 
monomial $x^{\bb_i}$.  Then we have the short exact sequence 
 $$0 \to N_i \to M_{i-1} \to M_i \to 0$$ 
in $\mod_{\NN^n} S$ for each $1 \leq i \leq m$.  Moreover, we have 
$$N_i \cong \kk_{\ba \vee \br}[\bb_i, \bb'_i] \quad \text{and } \quad M_m =0.$$
Since $\sreg (N_i) = s$ for all $i$ (see the comment before Definition~\ref{shreg dfn}), 
we can proved that $\sreg (M_i) =s$ for all $i$ by backward induction on $i$ 
starting from $i=m-1$. Since $M=M_0$, we are done. 
\qed 

\section*{Acknowledgement}
The authors are grateful to an anonymous referee for pointing out gaps in an earlier 
version of the paper.

\end{document}